\DeclareMathOperator{\Tors}{Tors}
\DeclareMathOperator{\Hom}{Hom}
\DeclareMathOperator{\Tor}{Tor}
\DeclareMathOperator{\Ext}{Ext}
\DeclareMathOperator{\Ker}{Ker}
\DeclareMathOperator{\Coker}{Coker}
\DeclareMathOperator{\Img}{Im}
\DeclareMathOperator{\sk}{sk}
\DeclareMathOperator{\spn}{span}
\def\CC{\mathbb{C}}
\def\ZZ{\mathbb{Z}}
\def\FF{\mathbb{F}}
\def\QQ{\mathbb{Q}}
\def\RR{\mathbb{R}}
\def\Zg{\ZZ_{\geq 0}}
\def\Zm{\Zg^m}
\def\R{\mathcal{R}}
\def\Z{\mathcal{Z}}
\def\K{\mathcal{K}}
\def\ZK{\Z_\K}
\def\RK{\R_\K}
\def\OZK{\Omega\ZK}
\def\DJ{\mathrm{DJ}}
\def\caa{(\underline{CA},\underline{A})}
\def\cyy{(\underline{CY},\underline{Y})}
\def\clxx{(\underline{C\Omega X},\underline{\Omega X})}
\def\cgg{(\underline{CG},\underline{G})}
\def\ux{(\underline{X},\underline{\ast})}
\def\uxa{(\underline{X},\underline{A})}
\def\k{\mathbf{k}}
\def\H{{\widetilde{H}}}
\def\MF{\mathrm{MF}}
\newtheorem*{theorem*}{Theorem}
\newtheorem{theorem}{Theorem}[section]
\newtheorem{lemma}[theorem]{Lemma}
\newtheorem{proposition}[theorem]{Proposition}
\newtheorem{corollary}[theorem]{Corollary}
\theoremstyle{definition}
\newtheorem{definition}[theorem]{Definition}
\newtheorem{remark}[theorem]{Remark}
\newtheorem{example}[theorem]{Example}
\numberwithin{equation}{section}
\tikzset{curve/.style={settings={#1},to path={(\tikztostart)
    .. controls ($(\tikztostart)!\pv{pos}!(\tikztotarget)!\pv{height}!270:(\tikztotarget)$)
    and ($(\tikztostart)!1-\pv{pos}!(\tikztotarget)!\pv{height}!270:(\tikztotarget)$)
    .. (\tikztotarget)\tikztonodes}},
    settings/.code={\tikzset{quiver/.cd,#1}
        \def\pv##1{\pgfkeysvalueof{/tikz/quiver/##1}}},
    quiver/.cd,pos/.initial=0.35,height/.initial=0}
\tikzset{tail reversed/.code={\pgfsetarrowsstart{tikzcd to}}}
\tikzset{2tail/.code={\pgfsetarrowsstart{Implies[reversed]}}}
\tikzset{2tail reversed/.code={\pgfsetarrowsstart{Implies}}}
\tikzset{no body/.style={/tikz/dash pattern=on 0 off 1mm}}
\title{Anick's conjecture for polyhedral products}
\author{Lewis Stanton} 
\address{Mathematical Sciences, University of Southampton, Southampton SO17 1BJ, United Kingdom}
\email{L.R.Stanton@soton.ac.uk}
\author{Fedor Vylegzhanin} 
\address{\parbox{\linewidth}{
National Research University Higher School of Economics, Moscow, Russia;\\
Steklov Mathematical Insitute of Russian Academy of Sciences, Moscow, Russia
}}
\email{vylegf@gmail.com}
\subjclass[2020]{Primary 55P35, 57S12, 16E30; Secondary 13F55, 14M25, 55P60, 57R18, 57R19.}
\keywords{polyhedral product, homotopy type, toric orbifold, loop space decomposition, moment-angle complex, loop homology}
\begin{document}

\begin{abstract}
We develop a method for studying the pointed loop space of general polyhedral products, showing that many properties are determined by the moment-angle complex. To apply the method, we show that localised away from a finite set of primes, the loop space of a moment-angle complex is homotopy equivalent to a product of loops on spheres. As a consequence, we give p-local loop space decompositions of quasitoric manifolds, certain toric orbifolds and a wide family of polyhedral products. This verifies a conjecture of Anick for such spaces. We also describe the additive structure of loop homology of simply connected polyhedral products in terms of polynomials studied by Backelin and Berglund.
\end{abstract}
\maketitle

\section{Introduction}

Moore's conjecture asserts a deep connection between the torsion-free and torsion parts of homotopy groups. In particular, Moore conjectures that a finite, simply connected $CW$-complex has finitely many rational homotopy groups if and only if for every prime $p$, there is a finite power of $p$ which annihilates the $p$-torsion part of the homotopy groups. An approximation to Moore's conjecture is Anick's conjecture \cite{anick_loop}. This asserts that if $X$ is a finite, simply connected $CW$-complex, then localised at all but finitely many primes, $\Omega X$ decomposes up to homotopy as a product of spheres, loops on spheres, and a list of well-studied indecomposable torsion spaces which were defined by Cohen, Moore and Neisendorfer. We study Anick's conjecture in the context of polyhedral products. Polyhedral products are subspaces of Cartesian products which unify various constructions across mathematics (see \cite{BBC20}), and the study of their pointed loop space has recently attracted much interest \cite{PT19,St24a,St25,vylegzhanin,El24}.
The first goal of this paper is to give $p$-local loop space decompositions of moment-angle complexes, an important construction in toric topology \cite{BP15}. We use this to describe $p$-local homotopy groups of simply connected toric orbifolds, if $p$ is a sufficiently large prime. The second is to reduce the study of pointed loop spaces of general polyhedral products down to the case of moment-angle complexes. As an application, we show that Anick's conjecture holds for a wide family of polyhedral products.

Let $\K$ be a simplicial complex on the vertex set $[m]:=\{1,\dots,m\}$, and for $1 \leq i \leq m$, let $(X_i,A_i)$ be a pair of pointed $CW$-complexes, where $A_i$ is a pointed $CW$-subcomplex of $X_i$. The \textit{polyhedral product associated to} $\K$ is \[\uxa^\K = \bigcup\limits_{\sigma \in \K} \left(\prod\limits_{i=1}^m Y^\sigma_i\right)\subseteq\prod_{i=1}^m X_i,\] where $Y^\sigma_i = X_i$ if $i \in \sigma$, and $Y^\sigma_i = A_i$ if $i \notin \sigma$. When each $(X_i,A_i) = (D^2,S^1)$, the polyhedral product is called a \textit{moment-angle complex} and is denoted $\ZK$. This is closely associated to the \textit{Davis--Januszkiewicz} space $\DJ(\K)$, which corresponds to the case where each $(X_i,A_i) = (\mathbb{C}P^\infty,\ast)$. Polyhedral products of the form $\ux^\K$ were first defined by Anick \cite{anick_connection} under the name of $\Gamma$-wedges, but the generalised definition above was formulated by Buchstaber and Panov \cite{BP02} (and independently in unpublished notes of Strickland). We will assume throughout the paper that $\K$ has no ghost vertices (i.e. $\{i\} \in \K$ for all $i \in [m]$).

The study of the pointed loop space of certain polyhedral products, including moment-angle complexes and Davis--Januszkiewicz spaces, has been the subject of intense study recently. In particular, integral decompositions of their pointed loop spaces in certain cases are given in \cite{PT19,St24a,St25,El24} and homological properties are determined in \cite{GPTW16,Ca24,vylegzhanin}. Moreover, the loop homology of moment-angle complexes has connections to commutator subgroups of right-angled Coxeter groups in geometric group theory \cite{PRah24,cartesian} and diagonal subspace arrangements \cite{Dob09}.  Moore's conjecture has been verified for moment-angle complexes in \cite{HST19}. However, this was not done in a way which also verifies Anick's conjecture, which would allow one to more explicitly enumerate the torsion-free and $p$-torsion parts of the homotopy groups for certain primes $p$.

To go further, we state Anick's conjecture in more detail. For a collection of topological spaces $\mathcal{X}$, let $\prod \mathcal{X}$ be the collection of spaces homotopy equivalent to a finite type product of spaces in $\mathcal{X}$. Let $\mathcal{P} = \{S^{2n-1},\Omega S^{m} \: : \: n \geq 1, m \geq 3, m \notin \{4,8\}\}$. Let $p$ be an odd prime. Cohen, Moore and Neisendorfer \cite{CMN79,CMN87,Nei87} defined spaces $S^{2m+1}\{p^r\}$ and $T^{2m+1}\{p^r\}$ for $r,m \geq 1$ which appear in the decomposition of the loops on a Moore space. When $p=2$ and $s \geq 2$, Cohen \cite{Co89} defined an analogous space $T^{2m+1}\{2^s\}$. Let \[\mathcal{T}_0 = \{T^{2m+1}\{p^r\},S^{2m+1}\{p^r\},T^{2m+1}\{2^s\}\::\: p \text{ odd prime, }m,r \geq 1, s \geq 2\}.\] 

Anick's conjecture asserts that if $X$ is a simply connected, finite $CW$-complex, then, localised at any sufficiently large prime, $\Omega X \in \prod(\mathcal{P} \cup \mathcal{T}_0)$. Anick's conjecture has been verified for wedges of spheres \cite{Hi55}, certain two-cones \cite{An89b}, certain highly connected Poincar\'e duality complexes \cite{BB18, BT22, ST25}, and certain polyhedral products associated to flag complexes \cite{PT19}, $1$-dimensional simplicial complexes \cite{St24a}, $2$-dimensional simplicial complexes \cite{St25} and families of polyhedral join products \cite{El24}. In the case of Poincar\'e duality complexes and the families of polyhedral products listed above, the loop space decompositions are integral. In the context of polyhedral products, the first localised results were given in \cite{St25}, which showed that if $p$ is a large enough prime, $\Omega \ZK \in \prod\mathcal{P}$ under certain cohomological conditions on the full subcomplexes of $\K$. 

In this paper, we show that such a decomposition holds for all moment-angle complexes when localised away from a finite number of primes. If $\K$ is a simplicial complex on $[m]$, then the set of primes is bounded above by $2^{m 2^{2^m}}$. However, this bound can be improved greatly in most cases depending on the combinatorics of $\K$. We prove the following decomposition, where the specific primes that need to be inverted will be made explicit in Theorem ~\ref{thm:anick for djk}. Moreover, we provide the enumeration of factors in Corollary \ref{crl:MACenumhtpy}.

\begin{theorem}[{Theorem \ref{thm:anick for djk}}]
\label{thm:introAnickforzk}
    If $\K$ is a simplicial complex, then localised away from an explicit finite set of primes $\Omega \ZK \in\prod\mathcal{P}$.
\end{theorem}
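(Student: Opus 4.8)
The plan is to build $\ZK$ up one vertex at a time and to split off the new loop-space factors at each stage, at the cost of inverting finitely many primes; formally this is an induction on the number of vertices $m$. The inductive step uses the standard homotopy cofibration of polyhedral products relating $\ZK$ to the moment-angle complex $\Z_{\K\setminus v}$ of the deletion and $\Z_{\mathrm{lk}_\K(v)}$ of the link of a chosen vertex $v$, whose cofibre $C$ is a double suspension built from $\Z_{\mathrm{lk}_\K(v)}$; here $\K\setminus v$ and $\mathrm{lk}_\K(v)$ have fewer essential vertices than $\K$. Before starting, I invert a first finite batch of primes: every prime occurring as torsion in the integral homology of a full subcomplex of $\K$ or of a vertex-link of such a subcomplex, together with the finitely many further primes needed to split any suspension of a complex of dimension at most $2m$ with free homology into a wedge of spheres. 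After this localisation, the Bahri--Bendersky--Cohen--Gitler stable decomposition of a moment-angle complex into a wedge of suspensions of its full subcomplexes shows that every suspension of a moment-angle complex occurring in the recursion --- in particular the cofibre $C$ --- is homotopy equivalent to a finite-type wedge of spheres.

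For the inductive step I may assume that $\K$ is not a join and has no cone point: a cone point lets one strip off a contractible $D^2$ factor, and a nontrivial join decomposition gives $\ZK\simeq\Z_{\K_1}\times\Z_{\K_2}$, in either case reducing the number of vertices. Form the homotopy fibration $F\to\ZK\to C$ and lift the inclusion $\Z_{\K\setminus v}\to\ZK$ to $F$, which is possible since the composite into $C$ is null. The goal is to show, after a further localisation, that (i) the fibration splits after looping, so $\Omega\ZK\simeq\Omega F\times\Omega C$, and (ii) $\Omega F$ decomposes as $\Omega\Z_{\K\setminus v}$ times loops on spheres. Both (i) and (ii) are \emph{inertness} statements: each reduces to showing that a suitable map from a wedge of spheres into an already-decomposed product of spheres and loops on spheres is inert --- equivalently, becomes null after looping. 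By an inertness lemma of the type established earlier in the paper --- in the spirit of \cite{St24a,St24b,BT22} --- such a map is inert once one inverts the primes that can occur as torsion in the homotopy groups of the finitely many spheres appearing in degrees $\le\dim\ZK\le 2m$; this is a finite, explicitly bounded set. Granting this, $\Omega F\times\Omega C\in\prod\mathcal{P}$: the factor $\Omega C$ does by the Hilton--Milnor theorem, and $\Omega F$ does by (ii) together with the inductive hypothesis applied to $\Z_{\K\setminus v}$ and $\Z_{\mathrm{lk}_\K(v)}$. Taking the union, over the at most $2^{2^m}$ recursive calls, of the torsion primes and the inertness primes introduced gives the required finite set; a crude estimate of the sizes involved yields the bound $2^{m2^{2^m}}$, which collapses dramatically when the full subcomplexes of $\K$ are low-dimensional or torsion-free and $\K$ has few minimal non-faces. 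Finally, propagating Poincar\'e series through the induction and comparing with $H_*(\Omega\ZK)$ computed via the Backelin--Berglund formula produces the enumeration of the $\Omega S^n$- and $S^{2n-1}$-factors of Corollary~\ref{crl:MACenumhtpy}.

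The step I expect to be the main obstacle is the inertness at each inductive stage, and in particular extracting a \emph{uniform} set of bad primes: prime by prime a map between finite complexes becomes inert after a large enough localisation, but one must control which primes can ever obstruct the splitting across all recursive calls and bound them purely in terms of the combinatorics of $\K$, which forces careful choices of the vertex $v$ and of the reductions for cone points, joins and ghost vertices. A secondary difficulty is identifying the lift $\Z_{\K\setminus v}\to F$ with the inductive decomposition of $F$ and keeping track of the spurious circle factors and half-smash corrections produced by the cofibration, so that the pieces genuinely assemble into one finite-type product of spaces from $\mathcal{P}$.
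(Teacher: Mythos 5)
Your approach is genuinely different from the paper's, and it has a gap at exactly the point you flag as the main obstacle. The paper never proves an inertness lemma and never performs the vertex-by-vertex induction you describe; instead it works algebraically. Its engine is a $P$-local Hilton--Serre--Baues theorem (Theorem~\ref{thm:R-local BHS}, via Anick's $R$-local Milnor--Moore theorem), combined with two algebraic facts: $\tau(\Omega\ZK)$ is finite (Theorem~\ref{thm:djk has fpt}, proved via Berglund's formula and the Hoffman--Kahle--Paquette torsion bound), and $H_*(\Omega\DJ(\K);\k)$ is multiplicatively generated in degrees $<2m$ for every field $\k$ (Proposition~\ref{prp:hodj bound on degrees}, proved from the Backelin--Roos theorem on double $\Ext$-algebras). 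Proposition~\ref{prp:bounded generation imply anick} then packages these into the conclusion without ever splitting a cofibration by hand. So where you try to realise the splitting geometrically at each stage of a recursion, the paper proves a generation bound once, for all of $\ZK$, and lets the Hilton--Serre--Baues machinery produce the equivalence in a single step.

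The gap in your proposal is the ``inertness lemma.'' You assert that a map from a wedge of spheres into a space already known to lie in $\prod\mathcal{P}$ becomes inert after inverting the primes that occur as torsion in $\pi_*(S^n)$ for $n\leq 2m$, citing \cite{St24a,St24b,BT22} for the flavour. No such general statement is proved there or in this paper, and it is false in the generality you need: inertness obstructions are structural, not merely torsion phenomena, and survive rationalisation (already the Whitehead product $[\iota_n,\iota_n]\in\pi_{2n-1}(S^n)$, which has infinite order, can obstruct such a splitting). Consequently the key step --- that both (i) the looped fibration $\Omega\ZK\to\Omega C$ splits and (ii) $\Omega F$ breaks off $\Omega\Z_{\K\setminus v}$ --- is not justified by inverting a set of primes determined by homotopy of spheres alone; one would need to control the actual attaching maps, and you correctly note this would require ``careful choices'' that you do not supply. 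A second, smaller issue: you claim ``every suspension of a moment-angle complex occurring in the recursion ... is homotopy equivalent to a finite-type wedge of spheres'' after one initial localisation. Using the stable BBCG decomposition this needs the suspensions $\Sigma^{|J|+1}|\K_J|$ to split, which controls only the torsion in $H_*(\K_J)$, but the fibre $F$ in Theorem~\ref{thm:gendecompuxa} is not itself a suspension of a full subcomplex, so the argument that $\Sigma F$ is a wedge of spheres is circular (it would follow if one already knew $\Omega\ZK\in\prod\mathcal{P}$). In short, your inductive scheme would require new inertness input that neither you nor the paper establishes, whereas the paper's algebraic route deliberately avoids the issue.

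One point of contact: your final paragraph about propagating Poincar\'e series through the recursion is, in spirit, exactly how the paper proves Theorem~\ref{thm:poincare series for hocaak} (inducting via Lemma~\ref{lmm:replacing A with Y}); but that argument is used there to compute Poincar\'e series, not to prove the decomposition, and it does not substitute for the missing splitting.
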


Theorem ~\ref{thm:introAnickforzk} can be used to show that Anick's conjecture also holds for  partial quotients of $\ZK$, i.e. quotients by a freely acting torus $H\subset (S^1)^m$ (this class includes all quasitoric manifolds \cite{dj}) and, more generally, simply connected toric orbifolds (see Section ~\ref{sec:partquoandorbi}).

In Section ~\ref{sec:LoopCaa}, a novel technique is developed which reduces the study of the pointed loop spaces of general polyhedral products to the study of moment-angle complexes. We give four applications of this method, but we expect it to be useful in other contexts. We first show that the torsion in loop homology and action of the Steenrod algebra on loop cohomology of polyhedral products of the form $\uxa^\K$ are determined by the moment-angle complex case, and the ingredient spaces (Theorem ~\ref{thm:uxaFPT}). The action of the Steenrod algebra on the cohomology of moment-angle complexes was studied in \cite{AGIJL25}, and in some cases this induces an action on $H^*(\OZK)$. Theorem ~\ref{thm:uxaFPT} shows that this gives rise to a non-trivial action on $H^*(\Omega \uxa^\K)$ with mild hypotheses on $(X_i,A_i)$ (see Example ~\ref{ex:torsionandSteenrod}).

Next, we apply this method to give loop space decompositions of a wider family of polyhedral products. In particular, we show $\Omega \caa^{\K} \in \prod(\mathcal{P} \cup \mathcal{T}_0)$ whenever $\Omega \ZK \in \prod (\mathcal{P} \cup \mathcal{T}_0)$ and the suspension of each $A_i$ is homotopy equivalent to a finite type wedge of spheres and Moore spaces (Theorem ~\ref{thm:caainPifzkinP}). We then show that Anick's conjecture holds for any polyhedral product of the form $\caa^\K$ such that each $A_i$ is connected and finite
(Corollary ~\ref{cor:finitecaa}). 

Finally, we calculate the Poincar\'e series of $H_*(\Omega \uxa^\K;\k)$ where $\k$ is any field and $\uxa^\K$ is simply connected (Theorem ~\ref{thm:PSforuxa}). This vastly generalises a result of Cai \cite{Ca24} who considered polyhedral products of the form  $\ux^\K$, where $\K$ is a flag complex. If $\Omega \uxa^\K \in \prod\mathcal{P}$, this allows us to enumerate the spheres and loops on spheres that appear in such a decomposition.  


The structure of the paper is as follows. In Section ~\ref{sec:Prelim}, we state some algebraic and homotopy theoretic results that will be required in later sections. In Section ~\ref{sec:Plocaldecomp}, we prove sufficient conditions on a space $X$ for its pointed loop space to be in $\prod\mathcal{P}$ localised away from a set of primes $P$. In Section \ref{sec:LoopMAC}, we focus on the case of a moment-angle complex and prove Theorem ~\ref{thm:introAnickforzk}. We use this in Section ~\ref{sec:partquoandorbi} to determine the homotopy groups of partial quotients and simply connected toric orbifolds in terms of the homotopy groups of their associated moment-angle complex. Finally, in Section ~\ref{sec:LoopCaa}, we consider loop spaces of more general polyhedral products and develop the methods to reduce the study of these to the loop spaces of moment-angle complexes. 

The authors would like to thank Steven Amelotte for pointing out the papers of Anick on $R$-local homotopy theory, which were instrumental in proving the main result, and the service team of Comptes Rendus for sending a scanned copy of \cite{backelin}.

\section{Preliminary material}
\label{sec:Prelim}
\subsection{Generation of algebras}

Let $\k$ be a commutative ring with unit and $X$ be a simply connected $CW$-complex. The loop homology $H_*(\Omega X;\k)$ has the structure of a graded associative $\k$-algebra given by the Pontryagin product. Let $p$ be a prime, and denote by $\FF_p$ the field of order $p$. For a set of primes $P$, denote by $\ZZ[1/P]$ the subring of $\QQ$ consisting of fractions whose denominator is a product of elements of $P$. Note that $\ZZ[1/\varnothing] = \ZZ$, and if $p$ is a prime and $P$ is the set of all primes without $p$, then $\ZZ[1/P] = \ZZ_{(p)}$.

Many of the results related to loop homology are formulated with coefficients in a field. However, we wish to obtain results when localised away from a set of primes $P$, and so wish to consider loop homology with coefficients in $\ZZ[1/P]$. To do this, we will use the following result which relates generation of $\FF_p$-algebras and $\ZZ[1/P]$-algebras.

\begin{lemma}
\label{lemma:Zp implies ZP for algebras}
    Let $N\geq 0$ be an integer and $A = \bigoplus_{n \geq 0} A_n$, $A_0=\ZZ\cdot 1$ be a graded $\ZZ$-algebra such that each $A_n$ is a finitely generated abelian group. Let $P$ be a set of primes such that for all $p \notin P$, the $\FF_p$-algebra $A \otimes \FF_p$ is multiplicatively generated by elements of degree less than $N$. Then the $\ZZ[1/P]$-algebra $A \otimes \ZZ[1/P]$ is multiplicatively generated by elements of degree less than $N$.
\end{lemma}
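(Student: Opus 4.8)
The plan is to reduce both hypotheses to a single statement about one finitely generated module over the principal ideal domain $\ZZ[1/P]$, and then invoke that such a module which vanishes modulo every maximal ideal must be zero.

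First I would set $B := A \otimes \ZZ[1/P]$ and let $C \subseteq B$ be the graded $\ZZ[1/P]$-subalgebra generated by $\bigoplus_{n < N} B_n$; the goal is exactly $C = B$. Since $B = \bigoplus_n (A_n \otimes \ZZ[1/P])$ with each summand finitely generated over the Noetherian ring $\ZZ[1/P]$, and $C_n \subseteq B_n$, the graded quotient module $Q := B/C$ has each $Q_n$ finitely generated over $\ZZ[1/P]$, and it suffices to prove $Q = 0$.

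The key observation is the comparison of reductions. For a prime $p \notin P$ we have $\ZZ[1/P]/p\,\ZZ[1/P] \cong \FF_p$, and since $- \otimes \FF_p$ commutes with direct sums and $B_n \otimes_{\ZZ[1/P]} \FF_p = A_n \otimes_\ZZ \FF_p$, we get a graded algebra isomorphism $B \otimes_{\ZZ[1/P]} \FF_p \cong A \otimes_\ZZ \FF_p$ with $(B \otimes \FF_p)_n = A_n \otimes \FF_p$. Under this identification the image of $C$ in $B \otimes \FF_p$ is precisely the $\FF_p$-subalgebra of $A \otimes \FF_p$ generated by elements of degree $< N$, which by hypothesis is all of $A \otimes \FF_p$. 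Hence the map $C \otimes_{\ZZ[1/P]} \FF_p \to B \otimes_{\ZZ[1/P]} \FF_p$ is surjective, and applying the right-exact functor $- \otimes_{\ZZ[1/P]} \FF_p$ to the short exact sequence $0 \to C \to B \to Q \to 0$ of graded $\ZZ[1/P]$-modules yields $Q \otimes_{\ZZ[1/P]} \FF_p = 0$; that is, $Q_n / p Q_n = 0$ for every $n$ and every $p \notin P$.

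Finally I would finish with commutative algebra over $\ZZ[1/P]$, whose maximal ideals are exactly the $(p)$ with $p \notin P$. Fixing $n$ and writing $M = Q_n$, a finitely generated $\ZZ[1/P]$-module with $M/\mathfrak m M = 0$ for every maximal ideal $\mathfrak m$, I localise at $\mathfrak m$ and apply Nakayama's lemma to get $M_{\mathfrak m} = 0$ for all maximal $\mathfrak m$, hence $M = 0$; alternatively one uses the structure theorem for modules over a PID, observing that a nonzero free or torsion summand of $M$ would survive reduction modulo some $p \notin P$. Thus $Q = 0$, so $C = B$, which is the assertion of the lemma. I do not anticipate a genuine obstacle here: the only point needing a little care is the degreewise compatibility of $- \otimes \FF_p$ with the grading and the algebra structure, which is what guarantees that being "generated in degrees $< N$" really transfers between $A \otimes \FF_p$ and the image of $C$.
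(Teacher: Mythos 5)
Your proof is correct. It rests on the same core observation as the paper's---form the quotient of the ambient algebra by the subalgebra generated in degrees below $N$, and show this quotient vanishes modulo every $p\notin P$---but you execute it more cleanly by working over $\ZZ[1/P]$ from the outset. The paper first constructs the subalgebra $S\subseteq A$ and the integral quotient $Q_n=A_n/S_n$, shows by lifting mod-$p$ polynomial expressions that $Q_n=pQ_n$ for all $p\notin P$, deduces via the structure theorem for finitely generated abelian groups that some integer $M$ coprime to every $p\notin P$ kills $Q_n$, and only then transfers the statement to $A\otimes\ZZ[1/P]$ by clearing denominators in a final paragraph. Your version avoids that two-step passage: defining $C\subseteq B:=A\otimes\ZZ[1/P]$ and $Q:=B/C$ directly, you get $Q_n/pQ_n=0$ from surjectivity of $C\otimes\FF_p\to B\otimes\FF_p$ plus right-exactness of $-\otimes\FF_p$, and then one application of Nakayama's lemma (localised at each maximal ideal $(p)$ of the PID $\ZZ[1/P]$) gives $Q_n=0$. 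This is a genuinely tidier route and is what I would recommend writing up; the one point worth keeping in mind, common to both arguments, is that they implicitly assume $P$ is a proper subset of the primes, so that there exists at least one maximal ideal $(p)$ with $p\notin P$ (if $P$ is the set of all primes the hypothesis is vacuous and the conclusion can fail, but this case never arises in the paper's applications).
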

\begin{proof}
       Denote by $S\subseteq A$ the subalgebra generated by elements of degree less than $N$. For each $n\geq 0$, there is a short exact sequence of abelian groups
    $$0\to S_n\to A_n\overset{\pi}\longrightarrow Q_n\to 0,$$ where $Q_n = A_n / S_n$.

    Let $x\in A_n$ be an element and let $x_p\in A_n\otimes\FF_p$ be its reduction mod $p$, where $p \notin P$. By assumption, $x_p$ is a non-commutative polynomial on elements of $A\otimes\FF_p$ of degree less than $N$. Taking their integral lifts, we obtain that $x=x'+pr$, where $x'\in S_n$ and $r\in A_n$. It follows that $\pi(x)=p\cdot\pi(r)$ for some $r$. Since $x\in A_n$ and $p \notin P$ were arbitrary, $Q_n=p\cdot Q_n$ for all $p \notin P$. The abelian group $Q_n$ is finitely generated, so there is no $p$-torsion for any $p \notin P$ and no free part. If $P = \varnothing$, this implies $Q_n$ is trivial and so $A$ is generated by elements of degree less than $N$. If $P \neq \varnothing$, there is an integer $M\neq 0$ coprime with any $p \notin P$ such that $M\cdot Q_n=0$. It follows that $M\cdot x\in S_n$.

    Now let $y\in A\otimes\ZZ[1/P]$ be a homogeneous element. Consider the map $f:A\to A\otimes\ZZ[1/P]$. There is an integer $M'\neq 0$ coprime with any $p \notin P$ such that $M'y=f(x)$ for some $x\in A$ (i.e. $M'$ is the denominator of $y$). By the argument in the previous paragraph, there is an integer $M$ coprime with 
 any $p\notin P$ such that $Mx$ is a non-commutative polynomial on elements of $A$ of degree $<N$, hence $Mf(x)=MM'y$ is a non-commutative polynomial on elements of $A\otimes\ZZ[1/P]$ of degree less than $N$. The number $MM'$ is invertible in $\ZZ[1/P]$, so $y$ is a non-commutative polynomial on elements of $A\otimes\ZZ[1/P]$ of degree less than $N$.
\end{proof}
\begin{lemma}
\label{lemma:Zp implies ZP for loop homology}
    Let $N\geq 0$ be an integer and $X$ be a simply connected space of finite type. Let $P$ be a set of primes such that for any prime $p\notin P$,
    \begin{enumerate}
        \item $H_*(\Omega X;\ZZ)$ has no $p$-torsion;
        \item The $\FF_p$-algebra $H_*(\Omega X;\FF_p)$ is multiplicatively generated by elements of degree less than $N$.
    \end{enumerate}
    Then the $\ZZ[1/P]$-algebra $H_*(\Omega X;\ZZ[1/P])$ is multiplicatively generated by elements of degree less than $N$.
\end{lemma}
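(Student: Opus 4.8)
The plan is to deduce this from Lemma \ref{lemma:Zp implies ZP for algebras} applied to the graded ring $A := H_*(\Omega X;\ZZ)$ equipped with the Pontryagin product. First I would verify the hypotheses of that lemma. Since $X$ is simply connected, $\Omega X$ is path-connected, so $A_0 = H_0(\Omega X;\ZZ) = \ZZ\cdot 1$; and since $X$ is of finite type, so is $\Omega X$, whence each $A_n = H_n(\Omega X;\ZZ)$ is a finitely generated abelian group.

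The key point is to identify $A \otimes \FF_p$ with $H_*(\Omega X;\FF_p)$ as $\FF_p$-algebras for every prime $p \notin P$. By the universal coefficient theorem there is a (natural, split) short exact sequence $0 \to A_n \otimes \FF_p \to H_n(\Omega X;\FF_p) \to \Tor(A_{n-1},\FF_p) \to 0$, and hypothesis (1) — that $H_*(\Omega X;\ZZ)$ has no $p$-torsion — forces every $\Tor$ term to vanish. Thus the coefficient-change map $A \otimes \FF_p \to H_*(\Omega X;\FF_p)$ induced by $\ZZ \to \FF_p$ is an isomorphism of graded vector spaces; moreover it is a ring homomorphism, since the Pontryagin product is induced by the loop multiplication $\Omega X \times \Omega X \to \Omega X$ together with the homology cross product, both of which are natural in the coefficients. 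Hence hypothesis (2) says precisely that $A \otimes \FF_p$ is multiplicatively generated by elements of degree less than $N$, so all hypotheses of Lemma \ref{lemma:Zp implies ZP for algebras} hold.

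Applying that lemma gives that $A \otimes \ZZ[1/P]$ is multiplicatively generated by elements of degree less than $N$. Finally I would note that $\ZZ[1/P]$ is a localization of $\ZZ$, hence flat, so the universal coefficient theorem yields a natural isomorphism $A \otimes \ZZ[1/P] \cong H_*(\Omega X;\ZZ[1/P])$ with no correction term, and this is again an algebra isomorphism by the same naturality of the Pontryagin product. Therefore $H_*(\Omega X;\ZZ[1/P])$ is multiplicatively generated by elements of degree less than $N$, as claimed.

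There is no substantial obstacle in this argument; the only points requiring care are ensuring that the coefficient-change maps are maps of algebras (not merely of graded modules), and invoking the no-$p$-torsion hypothesis to kill the $\Tor$ terms so that the mod-$p$ reduction of $H_*(\Omega X;\ZZ)$ genuinely coincides with $H_*(\Omega X;\FF_p)$ as an algebra.
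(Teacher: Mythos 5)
Your argument is correct and follows exactly the paper's route: verify the hypotheses of Lemma~\ref{lemma:Zp implies ZP for algebras} for $A = H_*(\Omega X;\ZZ)$, use the no-$p$-torsion hypothesis with the universal coefficient theorem to identify $A\otimes\FF_p \cong H_*(\Omega X;\FF_p)$ and $A\otimes\ZZ[1/P]\cong H_*(\Omega X;\ZZ[1/P])$ as algebras, and then apply the lemma. The paper's proof is a terser version of the same reasoning; your extra care in checking that the coefficient-change maps respect the Pontryagin product is a welcome elaboration, not a departure.
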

\begin{proof}
By assumption, $A=H_*(\Omega X;\ZZ)$ is a connected graded $\ZZ$-algebra of finite type. Since there is no $p$-torsion for $p\notin P$, $H_*(\Omega X;\FF_p)\cong A\otimes\FF_p$, and $H_*(\Omega X;\ZZ[1/P])\cong A\otimes\ZZ[1/P]$. Hence Lemma \ref{lemma:Zp implies ZP for algebras} applies.
\end{proof}

\subsection{Preliminary loop space decompositions} 

For a collection of topological spaces $\mathcal{X}$, let $\bigvee \mathcal{X}$ (resp. $\prod \mathcal{X}$) be the collection of spaces homotopy equivalent to a finite type wedge (resp. product) of spaces in $\mathcal{X}$. Let $\mathcal{W}$ be the collection of simply connected spheres, and let $\mathcal{M}_0$ be the collection of simply connected $p^r$-Moore spaces with $p^r \neq 2$. Recall from the introduction the collections $\mathcal{P} = \{S^{2n-1},\Omega S^{m} \: : \: n \geq 1, m \geq 3, m \notin \{4,8\}\}$ and \[\mathcal{T}_0 = \{T^{2m+1}\{p^r\},S^{2m+1}\{p^r\},T^{2m+1}\{2^s\}\::\: p \text{ odd prime, }m,r \geq 1, s \geq 2\}.\] We first state some properties of the collections $\prod(\mathcal{P} \cup \mathcal{T}_0)$ and $\bigvee(\mathcal{W} \cup \mathcal{M}_0)$.

\begin{lemma}
    \label{lem:relbetweenPandW}
    The following hold:
    \begin{enumerate}
        \item If $X \in \bigvee\mathcal{W}$, then $\Omega X \in \prod\mathcal{P}$;
        \item  If $X \in
        \prod\mathcal{P}$, then $\Sigma X \in \bigvee\mathcal{W}$;
        \item If $X \in \bigvee(\mathcal{W} \cup \mathcal{M}_0)$, then $\Omega X \in \prod(\mathcal{P} \cup \mathcal{T}_0)$;
        \item If $X \in \prod(\mathcal{P} \cup \mathcal{T}_0)$, then $\Sigma X \in \bigvee(\mathcal{W} \cup \mathcal{M}_0)$.
    \end{enumerate}
\end{lemma}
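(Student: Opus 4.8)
The plan is to prove the four statements essentially independently, using standard loop space calculus together with the Hilton–Milnor theorem and known decompositions of loops on Moore spaces. For (1): if $X\in\bigvee\mathcal{W}$, then $X\simeq\bigvee_\alpha S^{n_\alpha}$ with each $n_\alpha\geq 2$. By the James splitting / Hilton–Milnor theorem, $\Omega\Sigma(\bigvee_\alpha S^{n_\alpha-1})$ splits as a weak infinite product of loops on spheres $\Omega S^{k}$ indexed by basic products, where each $k=1+\sum(n_\alpha-1)\geq 2$. It remains to observe that every factor lies in $\prod\mathcal{P}$: an odd-dimensional sphere $S^{2j-1}$ is in $\mathcal{P}$, and $\Omega S^{2j}$ is handled by the classical fibration $S^{2j-1}\to\Omega S^{2j}\to\Omega S^{4j-1}$, which splits after looping, giving $\Omega S^{2j}\simeq S^{2j-1}\times\Omega S^{4j-1}$; iterating, $\Omega S^{2j}\in\prod\mathcal{P}$ (noting $S^1$ never appears since all spheres in the wedge are simply connected). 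One must double-check the excluded dimensions $4,8$: these only enter because $\Omega S^4,\Omega S^8$ are \emph{not} listed as allowed factors, but the EHP-type splitting above rewrites them in terms of $S^3,S^7$ and $\Omega S^7,\Omega S^{15}$, all of which are allowed, so no obstruction arises.

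For (2): if $X\in\prod\mathcal{P}$, write $X\simeq\prod_\alpha X_\alpha$ with each $X_\alpha$ either an odd sphere $S^{2n-1}$ or some $\Omega S^{m}$. Suspension of a finite-type product is a finite-type wedge of smash products of the factors (the "suspension of a product" splitting, $\Sigma(A\times B)\simeq\Sigma A\vee\Sigma B\vee\Sigma(A\wedge B)$, iterated), so it suffices to see that each smash product of finitely many spaces from $\mathcal{P}$ suspends into $\bigvee\mathcal{W}$. A smash of odd spheres is again a sphere. For factors of the form $\Omega S^m$, use James' splitting $\Sigma\Omega S^m\simeq\bigvee_{k\geq 1}S^{k(m-1)+1}$, a wedge of spheres, and more generally $\Sigma(\Omega S^{m_1}\wedge\cdots)$ splits as a wedge of spheres by the same James splitting applied coordinatewise together with the product-suspension formula. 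All spheres occurring are simply connected because each $X_\alpha$ is, so $\Sigma X\in\bigvee\mathcal{W}$.

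Statements (3) and (4) follow the same template but now incorporate Moore spaces. For (3): $X\simeq\bigl(\bigvee_\alpha S^{n_\alpha}\bigr)\vee\bigl(\bigvee_\beta P^{m_\beta}(p_\beta^{r_\beta})\bigr)$, and the Hilton–Milnor theorem expresses $\Omega X$ as a weak product of loops on smash products of these wedge summands. A smash product involving at least one Moore space is, after one suspension, a wedge of Moore spaces (using $P^a(p^r)\wedge S^b\simeq P^{a+b}(p^r)$ and, for a smash of two Moore spaces, the standard cofibre-sequence computation giving a wedge of two Moore spaces up to the Bockstein — here one uses $p^r\neq 2$ to stay within $\mathcal{M}_0$), hence its loops lie in $\prod(\mathcal{P}\cup\mathcal{T}_0)$ by the Cohen–Moore–Neisendorfer and Cohen decompositions of loops on Moore spaces; smashes of only spheres are handled by (1). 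For (4): apply the product-suspension splitting as in (2), reducing to smash products of spaces from $\mathcal{P}\cup\mathcal{T}_0$; one then needs that $\Sigma$ of a smash involving an $S^{2m+1}\{p^r\}$ or $T^{2m+1}\{p^r\}$ or $T^{2m+1}\{2^s\}$ is a wedge of spheres and Moore spaces, which follows from the known cell structures of these spaces (each has homology a sum of cyclic groups concentrated so that its suspension is a wedge of spheres and Moore spaces) together with $P^a(p^r)\wedge(-)$ and $S^a\wedge(-)$ preserving such wedges.

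The main obstacle is bookkeeping rather than depth: one must be careful that in (1) and (3) the Hilton–Milnor factors never produce $S^1$ or disallowed $\Omega S^4,\Omega S^8$ factors without rewriting, and in (3)–(4) that all Moore-space summands that appear have $p^r\neq 2$ so that they genuinely lie in $\mathcal{M}_0$ and the CMN/Cohen decompositions apply. A clean way to organise all four parts uniformly is to first record the two "suspension splitting" inputs (product-suspension and James splitting) and the two "loop decomposition" inputs (Hilton–Milnor, and the CMN/Cohen splittings of $\Omega P^{m}(p^r)$), and then observe that $\mathcal{W}\cup\mathcal{M}_0$ is closed under smash product up to suspension while $\mathcal{P}\cup\mathcal{T}_0$ is closed under loops of such; the four claims are then immediate formal consequences.
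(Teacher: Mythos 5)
Your approach matches the paper's: Hilton--Milnor for (1) and (3), and the iterated product--suspension splitting $\Sigma(A\times B)\simeq\Sigma A\vee\Sigma B\vee\Sigma(A\wedge B)$ together with the James splitting for (2) and (4); the paper simply cites \cite[Lemma 5.2]{St24b} for (3)--(4) where you spell out the CMN/Cohen decompositions directly. One small slip in (1): $S^1$ \emph{can} appear (e.g.\ $\Omega S^2\simeq S^1\times\Omega S^3$), but this is harmless since $S^1=S^{2\cdot 1-1}\in\mathcal{P}$; also, the EHP splitting of $\Omega S^{2j}$ needs no iteration, as $\Omega S^{4j-1}$ already lies in $\mathcal{P}$.
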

\begin{proof}
     Part (1) follows from the Hilton-Milnor theorem. Part (2) follows by iterating the homotopy equivalence $\Sigma(X \times Y) \simeq \Sigma X \vee \Sigma Y \vee \Sigma(X \wedge Y)$ and James splitting $\Sigma\Omega\Sigma X\simeq\bigvee_{k\geq 1}\Sigma X^{\wedge k}$ \cite{Ja55}.   
     
     Parts (3) and (4) were proved for larger collections denoted $\prod(\mathcal{P} \cup \mathcal{T})$ and $\bigvee(\mathcal{W} \cup \mathcal{M})$ in \cite[Lemma 5.2]{St25}, but the same proof follows in our case.
\end{proof}

\begin{lemma}
    \label{lem:PTclosedunderret}
    The following hold integrally or localised away from a set of primes:
    \begin{enumerate}\item If $X$ is an $H$-space in $\prod \mathcal{P}$, and $A$ is a space which retracts off $X$, then $A \in \prod \mathcal{P}$,
    \item If $X$ is an $H$-space in $\prod \mathcal{P}\cup \mathcal{T}_0$, and $A$ is a space which retracts off $X$, then $A \in \prod \mathcal{P}\cup \mathcal{T}_0$,
    \item The collection $\bigvee\mathcal{W}$ is closed under retracts,
    \item The collection $\bigvee(\mathcal{W}\cup \mathcal{M}_0)$ is closed under retracts.
    \end{enumerate}
\end{lemma}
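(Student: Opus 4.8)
The plan is to prove all four parts by reducing to statements about retracts of products (resp.\ wedges) of the relevant building blocks, and then invoking a uniqueness/classification property of those building blocks. For parts (3) and (4), the argument is homological: if $X$ retracts off $Y=\bigvee\mathcal{W}$ (resp.\ $\bigvee(\mathcal{W}\cup\mathcal{M}_0)$), then $\widetilde H_*(X)$ is a retract of $\widetilde H_*(Y)$, hence a direct summand; since $\widetilde H_*(Y)$ is a sum of cyclic modules concentrated in the degrees dictated by spheres and Moore spaces, the same holds for $\widetilde H_*(X)$. One then builds a map from a wedge of spheres and Moore spaces realising this homology and checks it is a homotopy equivalence by Whitehead's theorem (using that both sides are simply connected of finite type, and that localisation away from a set of primes preserves this). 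The only subtlety is keeping track of which Moore spaces can occur — but since $\mathcal{M}_0$ already excludes the $2$-Moore space, and a direct summand of $\bigoplus \ZZ/p^{r}$ with $p^r\neq 2$ is again of this form, nothing new appears.

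For parts (1) and (2), I would use the fact that $X$ is an $H$-space together with the James-type splitting to pass to a statement about wedges. Concretely, if $A$ retracts off an $H$-space $X\in\prod\mathcal{P}$, then $\Sigma A$ retracts off $\Sigma X$, and by Lemma \ref{lem:relbetweenPandW}(2) we have $\Sigma X\in\bigvee\mathcal{W}$ (resp.\ $\Sigma X\in\bigvee(\mathcal{W}\cup\mathcal{M}_0)$ in case (2), after first noting $X\in\prod(\mathcal{P}\cup\mathcal{T}_0)$ gives $\Sigma X\in\bigvee(\mathcal{W}\cup\mathcal{M}_0)$ by part (4) of that lemma). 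By parts (3) and (4) just proved, $\Sigma A\in\bigvee\mathcal{W}$ (resp.\ $\bigvee(\mathcal{W}\cup\mathcal{M}_0)$). Now apply $\Omega$ and use Lemma \ref{lem:relbetweenPandW}(1) (resp.\ (3)) to get $\Omega\Sigma A\in\prod\mathcal{P}$ (resp.\ $\prod(\mathcal{P}\cup\mathcal{T}_0)$). Finally, since $A$ retracts off the $H$-space $X$, the evaluation $\Sigma\Omega A\to A$ admits a section compatible with the retraction data, so $A$ retracts off $\Omega\Sigma A$; closure of $\prod\mathcal{P}$ and $\prod(\mathcal{P}\cup\mathcal{T}_0)$ under retracts off $H$-spaces is then bootstrapped, or — more cleanly — one observes directly that $A$ itself is an $H$-space (being a retract of the $H$-space $X$) and reruns the wedge/loop argument so that the retraction of $A$ off $\Omega\Sigma A$ suffices to conclude.

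The main obstacle is the circularity lurking in the last step: to deduce $A\in\prod\mathcal{P}$ from "$A$ retracts off $\Omega\Sigma A\in\prod\mathcal{P}$" we seem to need exactly the statement we are proving. The way around this is to retract off a genuine product of $\mathcal{P}$-factors rather than an abstract $H$-space in $\prod\mathcal{P}$: Lemma \ref{lem:relbetweenPandW}(1) produces $\Omega\Sigma A$ as an honest finite-type product of spheres and loops on spheres (the Hilton--Milnor splitting is an explicit product), so the relevant claim becomes "a space retracting off an explicit product $\prod_\alpha P_\alpha$ with $P_\alpha\in\mathcal{P}$ lies in $\prod\mathcal{P}$". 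This is handled by the same homological-summand argument as in parts (3) and (4), applied to $H_*(-;\FF_p)$ for all relevant $p$ together with the torsion-freeness coming from the $P_\alpha$, using that $H_*(\Omega S^n)$ and $H_*(S^{2n-1})$ are free and that a graded direct summand of such an algebra, realised by a subproduct, is again of the required form; here one invokes that retracts in the homotopy category of nilpotent finite-type spaces are detected by homology after localisation. The remaining parts of the bookkeeping — simple connectivity, finite type, and compatibility with inverting a fixed set of primes — are routine.
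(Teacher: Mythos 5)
Your treatment of parts (3) and (4) is sound in outline: for a retract $X$ of a wedge $Y\in\bigvee\mathcal{W}$ (or $\bigvee(\mathcal{W}\cup\mathcal{M}_0)$), $\widetilde H_*(X;\ZZ)$ is a direct summand of $\widetilde H_*(Y;\ZZ)$, hence again a finite-type sum of $\ZZ$'s and $\ZZ/p^r$'s with $p^r\neq 2$; since each generator of $\widetilde H_*(Y)$ is in the Hurewicz image (coming from a sphere or Moore space), the retraction $r\colon Y\to X$ pushes a splitting of $r_*$ down to explicit maps $S^{n_j}\to X$ and $P^{n_k}(p^r)\to X$ realising a basis of $\widetilde H_*(X)$; wedging these and applying Whitehead gives the equivalence. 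Simple connectivity of $X$ follows because $\pi_1(X)$ retracts off $\pi_1(Y)=0$. This matches what the paper attributes to \cite{Am24} and \cite{St24b}.

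The problem is in parts (1) and (2), and you have correctly named it yourself: after showing $A$ is an $H$-space and $\Sigma A\in\bigvee\mathcal{W}$ (resp. $\bigvee(\mathcal{W}\cup\mathcal{M}_0)$), and hence $\Omega\Sigma A\in\prod\mathcal{P}$ (resp. $\prod(\mathcal{P}\cup\mathcal{T}_0)$), you are left needing precisely that a retract of a finite-type product of spheres and loops on spheres is again of that form. Relabelling this as ``retracting off an explicit product'' does not change the content: every element of $\prod\mathcal{P}$ is by definition homotopy equivalent to an explicit product, so the ``explicit product'' version of the claim \emph{is} part (1). Your proposed resolution via ``the same homological-summand argument as in (3)--(4)'' fails because the analogy between wedges and products breaks down at exactly this point: $H_*(\bigvee_\alpha Y_\alpha)$ is a direct sum and a graded summand can be realised by a subwedge, whereas $H_*(\prod_\alpha P_\alpha;\FF_p)$ is a tensor product of polynomial and exterior algebras, and a graded direct summand of such an algebra is in general nothing like a tensor factor (already $\FF_p[x_2]$, the homology of $\Omega S^3$, has many graded vector-space summands and no nontrivial sub-Hopf-algebra splittings). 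The appended assertion that ``retracts in the homotopy category of nilpotent finite-type spaces are detected by homology after localisation'' is not a theorem in the form you need, and in any case would not convert a module-level summand into a product decomposition of spaces.

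For comparison, the paper does not prove (1) or (2) from scratch: it cites \cite[Theorem~3.10]{St24a} for (1) and \cite[Theorem~4.9]{St24b} for (2), observing that those arguments go through after inverting a set of primes. The nontrivial content you would still need to supply — and which those references supply by a genuinely different argument, not a wedge-to-product analogy — is exactly the closure of $\prod\mathcal{P}$ (resp.\ $\prod(\mathcal{P}\cup\mathcal{T}_0)$) under retracts of $H$-spaces. So: (3) and (4) are essentially correct modulo routine details; (1) and (2) have a genuine gap at the step you flagged, and the workaround you sketch does not close it.
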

\begin{proof}
    Part (1) was proved integrally in \cite[Theorem 3.10]{St24a}. The same proof goes through when localised away from a set of primes. Part (2) was proved integrally for the larger collection $\prod(\mathcal{P}\cup \mathcal{T})$ in \cite[Theorem 4.9]{St25}, but the same proof holds restricting to spaces in $\prod(\mathcal{P} \cup \mathcal{T}_0)$. The same proof also works when localised away from a finite set of primes. Part (3) is well known (see for example \cite[Lemma 3.1]{Am24}), and part (4) was proved for the larger collection $\bigvee(\mathcal{W}\cup \mathcal{M})$ in \cite[Theorem 3.5]{St25}, but the same proof holds restricting to spaces in $\bigvee(\mathcal{W} \cup \mathcal{M}_0)$.
\end{proof}

\begin{remark}
\label{rmk:Hspheres}
    In some cases, the hypothesis that $X$ is an $H$-space in part (1) and (2) of Lemma ~\ref{lem:PTclosedunderret} is always satisfied if $X \in \prod \mathcal{P}$. Let $P$ be a possibly empty set of primes. If $2 \in P$, then localised away from $P$, any space in $\mathcal{P}$ is an $H$-space, and so any space $X \in \prod \mathcal{P}$ is an $H$-space. However, integrally, or if $2 \notin \mathcal{P}$, the only spheres which are $H$-spaces are $S^1$, $S^3$ and $S^7$. Hence, in this case, if $X \in \prod\mathcal{P}$ is an $H$-space, then there are no spheres of the form $S^{2n-1}$ with $n \notin \{1,2,4\}$ in the product decomposition for $X$.
\end{remark}

Finally, let $\K$ be a simplicial complex on $[m]$ and let $H$ be a closed, connected subgroup of $T^m=(S^1)^m$. In particular, $H \cong T^{m-n}$ for some $n$. The standard action $T^m\curvearrowright (D^2)^{\times m}$ restricts to $H\curvearrowright\ZK$, and
the quotient $\ZK/H$ gives rise to many constructions within toric topology. For example, the quotient by a free action is called a \emph{partial quotient}, which includes the class of quasitoric manifolds and simply connected smooth toric varieties over $\mathbb{C}$. Certain almost free actions (actions such that the stabiliser of each point is a finite group) define toric orbifolds (see Section ~\ref{sec:partquoandorbi}). In the case that $\ZK/H$ is simply connected and is homotopy equivalent to the Borel construction, we prove a decomposition for $\Omega (\ZK/H)$. This is an mild adaptation of a well-known argument to experts in the area.

\begin{lemma}
    \label{lem:actionfibsplit}
    Let $H$ be a closed, connected subgroup of $T^m$ which acts on $\ZK$. There is a homotopy equivalence \[\Omega (EH\times_H \ZK) \simeq T^{m-n} \times \Omega \ZK,\] where $n <m$. In particular, if $EH \times_H \ZK \simeq \ZK/H$, there is a homotopy equivalence \[\Omega (\ZK/H) \simeq T^{m-n} \times \Omega \ZK.\]
\end{lemma}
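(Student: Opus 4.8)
The plan is to realise $EH\times_H\ZK$ as a homotopy pullback built from the Davis--Januszkiewicz space and then to invoke Theorem~\ref{thm:uxasplitfib}. Since $H$ is a closed, connected subgroup of $T^m$ it is a subtorus, so $H\cong T^{m-n}$ with $n<m$ and $BH\simeq(\mathbb{C}P^\infty)^{m-n}$. In particular $\Omega BH\simeq H$, $\Omega BT^m\simeq T^m$, and the inclusion $j\colon H\hookrightarrow T^m$ induces, up to homotopy, the same map $j$ on loop spaces of classifying spaces.

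First I would check that the Borel fibration $\ZK\to EH\times_H\ZK\xrightarrow{q}BH$ is the pullback of the Borel fibration $\ZK\to\DJ(\K)\xrightarrow{p}BT^m$ along $Bj\colon BH\to BT^m$. Writing $\DJ(\K)=ET^m\times_{T^m}\ZK$ and using that $ET^m$ serves also as a model for $EH$, so that $EH\times_H\ZK\simeq ET^m\times_H\ZK$, one has a commuting square with corners $ET^m\times_H\ZK$, $\DJ(\K)$, $BH=ET^m/H$ and $BT^m=ET^m/T^m$ and the evident maps; a routine verification on points of these Borel constructions shows it is a strict pullback square, and since $p$ is a fibre bundle it is a homotopy pullback square.

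Next, apply the based loop functor $\Omega$, which preserves homotopy pullbacks. By Theorem~\ref{thm:uxasplitfib} the looped Borel fibration $\OZK\to\Omega\DJ(\K)\xrightarrow{\Omega p}\Omega BT^m\simeq T^m$ splits, giving $\Omega\DJ(\K)\simeq T^m\times\OZK$ with $\Omega p$ corresponding to the projection onto $T^m$; concretely, a section of $\Omega p$ is obtained by multiplying together, via the loop product, the looped coordinate inclusions $\Omega\bigl(\mathbb{C}P^\infty\hookrightarrow\DJ(\K)\bigr)$, one for each vertex $\{i\}$, which are available since $\K$ has no ghost vertices. Hence $\Omega(EH\times_H\ZK)$ is the homotopy pullback of $T^m\times\OZK\xrightarrow{\mathrm{pr}}T^m\xleftarrow{j}H$, which is $\OZK$ times the homotopy pullback of $T^m\xrightarrow{\mathrm{id}}T^m\xleftarrow{j}H$; as one of those maps is the identity, the latter pullback is $H$, so
$$\Omega(EH\times_H\ZK)\simeq\OZK\times H\cong T^{m-n}\times\OZK.$$
Finally, if $EH\times_H\ZK\simeq\ZK/H$, applying $\Omega$ yields $\Omega(\ZK/H)\simeq T^{m-n}\times\OZK$.

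The substance is confined to two standard inputs: identifying the Borel construction of the restricted $H$-action with the pullback of that of the $T^m$-action along $BH\to BT^m$ (a bookkeeping check on the Borel constructions), and knowing that the splitting of Theorem~\ref{thm:uxasplitfib} carries $\Omega p$ to the projection $T^m\times\OZK\to T^m$ — the point at which the no-ghost-vertices hypothesis enters, to supply the coordinate inclusions $\mathbb{C}P^\infty\hookrightarrow\DJ(\K)$. Once these are granted the conclusion is formal, which is the sense in which the statement is a mild adaptation of a known argument; I do not expect any genuine obstacle.
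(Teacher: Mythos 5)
Your proof is correct, but it takes a genuinely different route from the paper's. The paper argues directly from the Borel fibration $H\to\ZK\to EH\times_H\ZK$: extending one step to the left gives a homotopy fibration $\Omega\ZK\to\Omega(EH\times_H\ZK)\xrightarrow{r}H\simeq T^{m-n}$, and because $\ZK$ is $2$-connected the connecting map $r$ is an isomorphism on $\pi_1$, so one builds a right homotopy inverse $s\colon T^{m-n}\to\Omega(EH\times_H\ZK)$ by multiplying together, via the $H$-space structure, representatives of the $\ZZ$-summands of $\pi_1$. This is self-contained and never mentions $\DJ(\K)$. You instead realise $EH\times_H\ZK$ as the homotopy pullback of $BH\to BT^m\leftarrow\DJ(\K)$, loop the square, and import the already-established splitting $\Omega\DJ(\K)\simeq T^m\times\OZK$ from Theorem~\ref{thm:uxasplitfib}, reducing everything to a pullback computation. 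Your approach is more structural and modular (it transfers a known section along a pullback rather than re-running the section-building argument), at the cost of the extra bookkeeping needed to check that the Borel square is a strict — hence, as $p$ is a fibration, homotopy — pullback, and that the identification $\Omega\DJ(\K)\simeq T^m\times\OZK$ carries $\Omega p$ to the projection. The paper's approach is shorter and uses only properties of $\ZK$ itself; yours makes explicit the relationship between the $H$-Borel construction and the Davis--Januszkiewicz space that underlies the lemma. Both are valid, and both ultimately hinge on the same $H$-space mechanism for producing the section.
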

\begin{proof}
    The action of $H$ on $\ZK$ implies there is a principal $H$-fibration $H \rightarrow \ZK \rightarrow EH\times_H \ZK$. Since $H$ is a closed, connected subgroup of $T^m$, $H \cong T^{m-n}$ for some $n<m$. Therefore, there is a homotopy fibration $\Omega (EH\times_H \ZK) \xrightarrow{r} T^{m-n} \rightarrow \ZK$. Since $\ZK$ is $2$-connected \cite[Proposition
4.3.5 (a)]{BP15}, $EH \times_H \ZK$ is simply connected, and $r$ induces an isomorphism on $\pi_1$. Each generator of a $\mathbb{Z}$ summand in $\pi_1(\Omega (EH\times_H \ZK))$ is in the Hurewicz image. Using the $H$-space structure on $\Omega (EH\times_H \ZK)$, these maps can be multiplied to obtain a map $s:T^{m-n} \rightarrow \Omega (EH \times_H \ZK)$ which is a right homotopy inverse for $r$. Hence, we obtain the claimed homotopy equivalences.
\end{proof}

The following lemma might be well known to experts. Our proof follows \cite[Proposition 1.6]{ustinovskii}. See also \cite[Lemma 4.2]{franz_toric} for an alternative argument.

\begin{lemma}
\label{lemma:quotient-localisation}
Let $Z$ be a simply connected $G$-CW-complex, where $G\simeq(S^1)^m$ is a compact torus. Let $H\subset G$ be a closed subgroup such that $H\curvearrowright Z$ almost freely, i.e. for any $z\in Z$ the stabilizer $H_z=\{h\in H:h.z=z\}$ is finite. Denote by $T=H^\circ$ the maximal connected subgroup of $H$, and consider the set of primes 
$$P=\{\text{prime divisors of }|\pi_0(H)|\}\cup\bigcup_{z\in Z}\{\text{prime divisors of }|H_z|\}.$$
Then
\begin{enumerate}
    \item $\pi_1(EH\times_H Z)\cong\pi_0(H)$, and the natural map $p:ET\times_T Z\to EH\times_H Z$ is the universal covering.
    \item The natural map $H_*(ET\times_T Z;\ZZ[1/P])\to H_*(Z/H;\ZZ[1/P])$ is an isomorphism.
    \item If $Z/H$ is simply connected, then $ET\times_T Z\simeq Z/H$ localised away from $P$.
\end{enumerate}
\end{lemma}
\begin{proof}
We use the space $EG$ as a model for both $EH$ and $ET$. This allows us to identify the map $p:ET\times_T Z\to EH\times_H Z$ with the quotient map $EG\times_T Z\to EG\times_H Z$ by the free action of $H/T\cong \pi_0(H)$; in particular, it is a covering map.

Since $\pi_1(Z)=0$, the exact sequence of homotopy groups for $Z\to EH\times_H Z\to BH$ gives the isomorphism $\pi_1(EH\times_H Z)\cong\pi_0(H)$. By the same argument, $\pi_1(ET\times_T Z)\cong\pi_0(T)=0$. Hence the covering map $p$ is universal, implying (1).

The statement (3) follows from (2) by Whitehead's theorem, since $\pi_1(Z/H)=0$ by assumption and $\pi_1(ET\times_T Z)=0$ by (1).

Now we prove (2). The map $ET\times_T Z\to Z/H$ factors as the composition
$$ET\times_TZ=EG\times_T Z\overset{p}\longrightarrow EG\times_H Z=EH\times_H Z\overset{q}\longrightarrow Z/H,$$
where $p$ is a covering map with fiber $\pi_0(H)$.
Since $|\pi_0(H)|$ is invertible in $\ZZ[1/P]$, the transfer map produces an isomorphism $p_*:H_*(EG\times_T Z;\ZZ[1/P])^{\pi_0(H)}\to H_*(EG\times_H Z;\ZZ[1/P])$. The action of $\pi_0(H)=H/T$ on $H_*(EG\times_T Z)$ extends to the action of the connected group $G/T$, hence it is trivial. It follows that $H_*(EG\times_T Z;\ZZ[1/P])\to H_*(EG\times_H Z;\ZZ[1/P])$ is an isomorphism.

It remains to show that $q_*:H_*(EH\times_H Z;\ZZ[1/P])\to H_*(Z/H;\ZZ[1/P])$ is an isomorphism. For a point $[z]\in Z/H$ represented by $z\in Z$, $q^{-1}([z])=EH\times_H (Hz)\cong EH\times_H (H/H_z)\simeq BH_z$, which is a classifying space for the finite abelian group $H_z$. Since $|H_z|$ is invertible in $\ZZ[1/P]$, we have $\H_*(BH_z;\ZZ[1/P])=0$. It follows that $q$ has $\ZZ[1/P]$-acyclic fibers, hence $q_*$ is an isomorphism by the Vietoris--Begle theorem \cite[9.15]{spanier} (applied to the map $\mathrm{sk}_n(EH\times_H Z)\to Z/H$ for sufficiently large $n$).
\end{proof}

\section{\texorpdfstring{$P$}{P}-local decompositions}
\label{sec:Plocaldecomp}

\subsection{Anick's \texorpdfstring{$R$}{R}-local Milnor--Moore theorem}
If $X$ is a simply connected $CW$-complex, the rational homotopy groups, $\pi_{*+1}(X) \otimes_\ZZ \QQ\cong\pi_*(\Omega X)\otimes_\ZZ\QQ$, have a Lie algebra structure given by Whitehead products. A fundamental result in rational homotopy theory is the Milnor--Moore theorem which states that $U(\pi_*(\Omega X) \otimes_\ZZ \QQ) \cong H_*(\Omega X;\QQ)$, where $U$ is the universal enveloping algebra functor. In this section, we state an approximation to the Milnor--Moore theorem which applies to subrings of $\QQ$. This was proved by Anick \cite{anick_mm}, although it was not stated explicitly in this form. 

Let $R$ be a subring of $\QQ$. By ``dgl'' we mean a differential Lie algebra over $R$. A dgl $L=\bigoplus_{k\geq 0}L_k$ is \emph{$r$-reduced} if $L_{<r}=0$. Fix an integer $\rho\geq 1$ such that $1/(\rho-1)!\in R$, and an integer $r\geq 1$. To each $r$-connected space $X$, Anick assigns an $r$-reduced dgl $\mathcal{K}(X)$, see \cite[Section 2]{anick_mm}.

\begin{lemma}[{\cite[Theorem 3.7]{anick_mm}}]
\label{lem:naturalmaptodgl}
Let $d=\min(r+2\rho-3,r\rho-1)$. There is a natural map of Lie algebras
$$\pi_*(\Omega X)\otimes_{\ZZ}R\to H(\mathcal{K}(X)),$$ which is defined in degrees $<r\rho$, an isomorphism
in degrees $<d$ and an epimorphism in degree $d$.\qed
\end{lemma}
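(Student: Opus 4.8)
Since Lemma~\ref{lem:naturalmaptodgl} is quoted verbatim from Anick's work on $R$-local homotopy theory \cite{anick_mm}, the plan is not to reprove it from scratch but to recall the architecture: extract the comparison map from Anick's $R$-local Lie model together with its realization functor, and then read off the range of validity from his connectivity estimates. A complete proof is exactly the content of \cite{anick_mm}.

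First I would recall the construction of $\mathcal{K}(X)$. For an $r$-connected $CW$-complex, $\mathcal{K}(X)$ is built as a free differential graded Lie algebra over $R$ out of the (co)chain/cobar data of $X$, playing over $R$ the role of Quillen's rational Lie model. The hypothesis $1/(\rho-1)!\in R$ is precisely what makes such an $R$-form exist: it kills the Eilenberg--Zilber and divided-power obstructions that would otherwise obstruct lifting the rational model to $R$, and this is the source of the bound $r\rho$ throughout the statement.

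Second, I would use Anick's realization functor $|\,\cdot\,|$, left adjoint to a singular-type functor in an appropriate range, together with the two facts he establishes: a natural map $|\mathcal{K}(X)|\to X$ that is an $R$-local equivalence through dimension of order $r\rho$, and a natural isomorphism $\pi_*(\Omega|L|)\otimes_{\ZZ}R\cong H(L)$ for free $R$-dgls $L$ in the same range (his analogue of the Quillen equivalence). The natural map of the lemma is then the composite
\[
\pi_*(\Omega X)\otimes_{\ZZ}R \;\xleftarrow{\;\;\Omega\varepsilon\;\;}\; \pi_*(\Omega|\mathcal{K}(X)|)\otimes_{\ZZ}R \;\xrightarrow{\;\cong\;}\; H(\mathcal{K}(X)),
\]
where $\varepsilon\colon|\mathcal{K}(X)|\to X$ is the realization map; in degrees where $\Omega\varepsilon$ is an $R$-equivalence the left arrow is inverted, and naturality is inherited from the functoriality of $\mathcal{K}$ and of $|\,\cdot\,|$. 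Equivalently, one can describe the map as the factorisation of the Hurewicz map $\pi_*(\Omega X)\otimes R\to H_*(\Omega X;R)$ through the inclusion of $H(\mathcal{K}(X))$ into $UH(\mathcal{K}(X))\cong H_*(\Omega X;R)$, valid in the same range.

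Finally, I would pin down the range. The map is only defined in degrees $<r\rho$ because $\mathcal{K}(X)$ and its realization are only constructed that far. The sharper bound $d=\min(r+2\rho-3,\,r\rho-1)$ records the two constraints: $r\rho-1$ is (one less than) the range in which the construction is available and $\Omega\varepsilon$ is an $R$-iso, while $r+2\rho-3$ is the degree below which the Lie model detects homotopy faithfully --- the relevant obstruction groups to comparing homotopy classes through the model vanish below it. The hard part is none of this bookkeeping but the input itself: Anick's obstruction theory guaranteeing that the $R$-local Lie model exists and realizes $X$ in the stated range, which must handle the failure over $R$ of both the Poincar\'e--Birkhoff--Witt theorem and the Eilenberg--Zilber theorem. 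In this paper we use Lemma~\ref{lem:naturalmaptodgl} as a black box, so beyond citing \cite{anick_mm} the only work is to confirm that each arrow in the composite above is natural, which is routine.
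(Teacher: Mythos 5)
Your proposal takes the same approach as the paper: the paper cites this as \cite[Theorem 3.7]{anick_mm} and gives no proof (only a \qed), treating it as a black box, which is exactly what you do. Your architectural sketch of Anick's argument is a reasonable gloss but goes beyond anything the paper supplies; the one thing worth flagging is that the paper's later use (in the proof of Theorem~\ref{thm:R-local Milnor--Moore}) identifies the map via the composite through $UH(\mathcal{K}(X))$ and $H(U\mathcal{K}(X))$ and its agreement with the Hurewicz homomorphism \cite[Lemma 3.4]{anick_mm}, which matches your ``equivalently'' description rather than the realization-functor one, so that alternate description is the one to keep in mind when reading the rest of Section~\ref{sec:Plocaldecomp}.
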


Define $N(r,\rho):=2\lceil (r+2)/2\rceil\rho-2$. Clearly, $N(r,\rho)\geq (r+2)\rho-2\geq r\rho$.
\begin{lemma}[{\cite[Lemma 1.12]{anick_mm}}]
\label{lem:dglUEA}
Let $L$ be an $r$-reduced dgl, and suppose that $L$ and $H_{<n}(UL)$ are free $R$-modules for some $n\leq N(r,\rho)$. The natural map $UH(L)\to H(UL)$ is an isomorphism in degrees $<n$ and an epimorphism in degree $n$.\qed
\end{lemma}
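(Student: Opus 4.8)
The plan is to compare the two universal enveloping algebras through their Poincar\'e--Birkhoff--Witt (PBW) filtrations and the resulting spectral sequence. Let $\{F_pUL\}_{p\geq 0}$ be the increasing filtration in which $F_pUL$ is the $R$-submodule spanned by products of at most $p$ elements of $L$, and filter $UH(L)$ in the same way. Since the differential of $UL$ is the derivation extending $d_L$ and $L\subseteq F_1UL$, it preserves this filtration, giving a spectral sequence abutting to $H(UL)$; because $L$ is $r$-reduced, a product of $p$ elements of $L$ has degree $\geq pr$, so the filtration is finite in each total degree and convergence is automatic. The natural map $UH(L)\to H(UL)$ respects these filtrations, and the goal is to show that through total degree $n$ everything behaves exactly as it would over $\QQ$, so that this map is an isomorphism on associated graded in the stated range.

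First I would identify the $E^0$-page. The graded PBW theorem identifies $\mathrm{gr}^F UL$ with the free graded-commutative algebra $S(L)$ on the graded $R$-module $L$ (polynomial on the even part, exterior on the odd part), but only through total degree $N(r,\rho)$; this is exactly where the hypotheses $1/(\rho-1)!\in R$ and $L_{<r}=0$ are used. They guarantee that the only denominators needed to symmetrise a product of elements of $L$ of total degree $<N(r,\rho)$ -- to kill the squares of odd-degree generators, and to invert the orders of the symmetric groups permuting the tensor factors that can occur in that range -- divide $(\rho-1)!$ and hence are invertible in $R$. Under this identification $(E^0,d^0)\cong S(L,d_L)$ is the free graded-commutative algebra on the chain complex $(L,d_L)$. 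A K\"unneth argument for free graded-commutative algebras -- using that $L$, and in the relevant range $H(L)$, are $R$-free, the latter being bootstrapped in low degrees from the freeness of $H_{<n}(UL)$ via the same filtration -- then identifies $E^1=H(S(L,d_L))$ with $S(H(L))$ through degree $N(r,\rho)$.

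Next I would run the analogous computation for $UH(L)$: since $H(L)$ is $R$-free in the range, PBW gives $\mathrm{gr}^F UH(L)\cong S(H(L))$ through degree $N(r,\rho)$, with zero differential. On associated graded the natural map becomes a map $S(H(L))=\mathrm{gr}^F UH(L)\to\mathrm{gr}^F H(UL)$ sending a product of homology classes of cycles of $L$ to the class of the corresponding product of those cycles, taken in $UL$; as such a product is a genuine cycle, every class of $E^1\cong S(H(L))$ is a permanent cycle, so the spectral sequence of $UL$ collapses at $E^1$ through the range and this map is an isomorphism onto $\mathrm{gr}^F H(UL)$ in degrees $\leq n$. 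Finally, the freeness of $H_{<n}(UL)$ as a module over the principal ideal domain $R$ rules out any nontrivial extension in reconstructing $H(UL)$ from its associated graded in that range, so the natural map $UH(L)\to H(UL)$ is an isomorphism in degrees $<n$ and, keeping track of the top filtration step, an epimorphism in degree $n$. (Conceptually this is the statement that $U$ preserves quasi-isomorphisms of $R$-free dgls, valid over $\QQ$, restricted to the degrees where it survives over $R$.)

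The main obstacle is the first identification: one must pin down exactly which denominators can appear when a product of elements of $L$ is rewritten in the ordered PBW basis once its total degree is below $N(r,\rho)$, and then check that the combinatorial definition $N(r,\rho)=2\lceil(r+2)/2\rceil\rho-2$ is precisely what keeps all of them among the divisors of $(\rho-1)!$. Once that numerical range is secured, the remainder is a routine -- if bookkeeping-heavy -- comparison of spectral sequences and universal-coefficient considerations over the ring $R$.
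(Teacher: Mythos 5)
First, a bibliographic point: the paper does not prove this lemma. It is quoted verbatim from Anick's paper \cite{anick_mm} (his Lemma 1.12), and the \texttt{\textbackslash qed} marks the end of a cited statement, not of a proof. So there is no ``paper's own proof'' to compare against; what can be assessed is whether your argument would constitute a correct proof.

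Your high-level structure --- filter $UL$ by word length, identify $E^0$ with $S(L)$ via a graded PBW theorem, use a K\"unneth argument and degeneration to compare with the analogous filtration on $UH(L)$, and invoke freeness of $H_{<n}(UL)$ to rule out extension problems --- is a sensible plan and quite possibly close in spirit to Anick's actual argument. However, the key numerical step, which you yourself flag as ``the main obstacle,'' is resolved incorrectly, and the error is substantive rather than cosmetic. You claim the hypotheses guarantee that the symmetric-group orders needed to rewrite a $p$-fold product in the PBW basis divide $(\rho-1)!$ whenever the total degree is below $N(r,\rho)$. But $L$ is only $r$-reduced, so a product of $p$ factors merely has degree $\geq pr$, and the constraint ``degree $<N(r,\rho)$'' gives $p<N(r,\rho)/r$. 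Since $N(r,\rho)=2\lceil(r+2)/2\rceil\rho-2$, one has $N(r,\rho)/r\approx\rho(1+2/r)$, which strictly exceeds $\rho$; for $r=1$ it is as large as $4\rho-2$. So products of $\rho$, $\rho+1$, $\dots$ factors do occur in the range, and symmetrising them would require inverting $\rho!$, $(\rho+1)!$, \dots, which are \emph{not} invertible in $R$. The bound $N(r,\rho)$ therefore cannot be playing the role you assign to it, and the identification of $E^0$ with $S(L)$ in that range remains unjustified by your argument.

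A second, smaller gap: the K\"unneth step needs $H(L)$ to be $R$-free (or at least flat) in the relevant range, but the hypotheses give freeness of $L$ and of $H_{<n}(UL)$, not of $H(L)$. You assert this can be ``bootstrapped,'' and indeed in degrees $<2r$ one has $UL_k=L_k$, so the claim is immediate there; but above degree $2r$ the argument would have to be an induction interwoven with the spectral-sequence comparison, and you have not set that induction up. Without it, the identification $E^1\cong S(H(L))$ is not established. In summary, the plan is reasonable, but the two places where the arithmetic of $R$ actually enters are precisely the two places the argument is not yet correct, so as written this does not constitute a proof.
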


\begin{lemma}[{\cite[Lemma 2.1(b)]{anick_mm}}]
\label{lem:Univtoloophom}
There is an isomorphism $H(U\mathcal{K}(X))\cong H_*(\Omega X;R)$ in degrees $<r\rho$.\qed
\end{lemma}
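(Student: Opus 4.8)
The plan is to recognise $U\mathcal{K}(X)$ as a free associative chain model for $\Omega X$ over $R$ that is valid through degree $r\rho$, and then pass to homology. Recall from \cite[Section 2]{anick_mm} that, after forgetting its differential, $\mathcal{K}(X)$ is the free graded Lie algebra over $R$ on a free $R$-module $V$ whose homogeneous basis corresponds to the positive-dimensional cells of an $r$-reduced $CW$-model of $X$, a $d$-cell contributing a basis element in degree $d-1$. Since $X$ is $r$-connected we may take a model with cells only in dimensions $0$ and $\geq r+1$, so $V$, and hence $\mathcal{K}(X)$, is $r$-reduced; the differential of $\mathcal{K}(X)$ is then assembled cell by cell from the attaching maps, in the manner of the Adams--Hilton construction.

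The first step is to identify the underlying $R$-algebra of $U\mathcal{K}(X)$. Since the universal enveloping algebra of a free graded Lie algebra on a free module is its tensor algebra (Poincar\'e--Birkhoff--Witt holds over $R$ because $V$ is $R$-free), there is an isomorphism of graded $R$-algebras $U\mathcal{K}(X)\cong T(V)$ under which the Lie differential extends to the unique derivation of $T(V)$ agreeing with it on generators. Thus $U\mathcal{K}(X)$ is a free differential graded algebra whose generators are in degree-shifted bijection with the cells of $X$ --- precisely the shape of an Adams--Hilton model.

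The second, and main, step is Anick's comparison: there is a morphism of differential graded algebras from $U\mathcal{K}(X)=T(V)$ to the normalised reduced chains $C_*(\Omega X;R)$ with the Pontryagin product which induces an isomorphism on homology in degrees $<r\rho$; taking homology then gives $H(U\mathcal{K}(X))\cong H_*(\Omega X;R)$ in that range. Over $\ZZ$ this is the classical Adams--Hilton model and holds in all degrees; the content of \cite[Lemma 2.1(b)]{anick_mm} is that over a subring $R\subseteq\QQ$ the \emph{free} model --- equivalently the identification $U\mathcal{K}(X)=T(V)$ together with the comparison map --- persists in the range $*<r\rho$, the hypothesis $1/(\rho-1)!\in R$ being exactly what secures it: the first obstruction to propagating a free $R$-model sits around degree $r\rho$ and is controlled by $\rho$-fold iterated Whitehead/Hopf products, which become null once all primes below $\rho$ are inverted. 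This second step is the hard part --- the inductive construction of the $R$-local model over the skeleta of $X$ and the bookkeeping that pins the validity range to $r\rho$ --- and I would not reprove it here but cite Anick \cite[Lemma 2.1(b)]{anick_mm}; downstream only the stated homology isomorphism is needed, which feeds, together with Lemmas \ref{lem:naturalmaptodgl} and \ref{lem:dglUEA}, into the $R$-local Milnor--Moore comparison.
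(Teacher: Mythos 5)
The paper proves this lemma purely by citation: the $\qed$ immediately following the statement records that it is \cite[Lemma~2.1(b)]{anick_mm} verbatim, with no further argument supplied. Your proposal ultimately does the same thing --- you explicitly decline to reprove the hard step and defer to Anick --- so the approaches coincide; the expository context you add (identifying $U\mathcal{K}(X)\cong T(V)$ via PBW and recognising it as an Adams--Hilton-type model for $\Omega X$ valid through degree $r\rho$) is a reasonable gloss on what is happening in Anick's paper, but is not part of the paper's proof, which is simply the citation.
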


We obtain the following result which is not stated in \cite{anick_mm} explicitly. Note that the case where $R=\QQ$, $\rho=+\infty$ is the Milnor--Moore theorem, and the case where $R=\ZZ$, $\rho=1$ is the Hurewicz theorem.

\begin{theorem}
\label{thm:R-local Milnor--Moore}
Let $r,\rho\geq 1$ be integers and let $X$ be an $r$-connected space of finite type. Let $R\subseteq\mathbb{Q}$ be a subring such that $1/(\rho-1)!\in R$ and assume $H_{<d}(\Omega X;R)$ is a free $R$-module, where $d\leq\min(r+2\rho-3,r\rho-1)$. Then the natural map
$$U(\pi_*(\Omega X)\otimes_\ZZ R)\to H_*(\Omega X;R)$$
induced by the Hurewicz homomorphism is an isomorphism in degrees $<d$, and an epimorphism in degree $d$.
\end{theorem}
\begin{proof}
Combining Lemmas ~\ref{lem:naturalmaptodgl}, ~\ref{lem:dglUEA} and ~\ref{lem:Univtoloophom}, the maps
$$U(\pi_*(\Omega X)\otimes_\ZZ R)\to UH(\mathcal{K}(X))\to H(U\mathcal{K}(X))\to H(\Omega X;R)$$
are defined and are isomorphisms (respectively, epimorphisms) in these degrees. The composition agrees with the Hurewicz homomorphism by \cite[Lemma 3.4]{anick_mm}.
\end{proof}
We will only use the following special case of this result. Recall for a space $X$, $\tau(X)$ is the set of primes $p$ appearing as $p$-torsion in $H_*(X;\ZZ)$.
\begin{corollary}
\label{crl:R-local MM special}
Let $X$ be a simply connected space of finite type and $N\geq 2$ be an integer. Let $P$ be a set of primes which contains $\tau(\Omega X)$ and all primes $p$ with $p<N$. Then the natural map
$$U(\pi_*(\Omega X)\otimes_\ZZ \ZZ[1/P])\to H_*(\Omega X;\ZZ[1/P])$$
is an epimorphism in degrees $k<N$.
\end{corollary}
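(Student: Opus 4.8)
The plan is to deduce this directly from Theorem~\ref{thm:R-local Milnor--Moore} by making the optimal choice of the parameter $\rho$. Since $X$ is simply connected, we take $r=1$, and we set $R=\ZZ[1/P]$ and $\rho=N$. The first thing to check is that this is a legitimate choice, i.e. that $1/(\rho-1)!=1/(N-1)!\in\ZZ[1/P]$: indeed, every prime dividing $(N-1)!$ is $\leq N-1<N$, hence lies in $P$ by hypothesis. (This is also the reason we cannot take $\rho>N$ in general: if $N$ happens to be prime, then $1/N!\notin\ZZ[1/P]$ need not hold, since $N\notin P$ a priori.)

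Next I would compute the range. With $r=1$ and $\rho=N$ we get
$$d=\min(r+2\rho-3,\,r\rho-1)=\min(2N-2,\,N-1)=N-1,$$
the last equality because $N\geq 2$. So Theorem~\ref{thm:R-local Milnor--Moore}, once its hypotheses are verified, will give that the Hurewicz-induced map $U(\pi_*(\Omega X)\otimes_\ZZ\ZZ[1/P])\to H_*(\Omega X;\ZZ[1/P])$ is an isomorphism in degrees $<N-1$ and an epimorphism in degree $N-1$; in particular it is an epimorphism in all degrees $k<N$, which is exactly the claim.

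It remains to verify the freeness hypothesis of Theorem~\ref{thm:R-local Milnor--Moore}, namely that $H_{<d}(\Omega X;\ZZ[1/P])$ is a free $\ZZ[1/P]$-module. Here I would argue that in fact all of $H_*(\Omega X;\ZZ[1/P])$ is free. Since $X$ is simply connected and of finite type, $\Omega X$ has finitely generated integral homology in each degree. Because $P$ contains $\tau(\Omega X)$, inverting the primes in $P$ annihilates all torsion, so $H_n(\Omega X;\ZZ[1/P])\cong H_n(\Omega X;\ZZ)\otimes_\ZZ\ZZ[1/P]$ is a finitely generated torsion-free module over $\ZZ[1/P]$. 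As $\ZZ[1/P]$ is a localisation of $\ZZ$, hence a PID, such a module is free, completing the verification and hence the proof.

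There is no serious obstacle here; the proof is essentially bookkeeping. The only point requiring care is the choice $\rho=N$, which is the largest value compatible with $1/(\rho-1)!\in\ZZ[1/P]$ and is precisely what makes $d=N-1$, so that the epimorphism range reaches degree $N-1$.
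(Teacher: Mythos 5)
Your proof is correct and takes the same route as the paper: both set $r=1$, $R=\ZZ[1/P]$, $\rho=N$ in Theorem~\ref{thm:R-local Milnor--Moore}, verify $1/(N-1)!\in R$ and the freeness of $H_*(\Omega X;\ZZ[1/P])$, and conclude from $d=N-1$. (You correctly compute $r+2\rho-3=2N-2$; the paper's proof writes $2N-1$ there, a harmless typo since $\min(2N-2,N-1)=\min(2N-1,N-1)=N-1$ for $N\geq 2$.)
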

\begin{proof}
We apply Theorem \ref{thm:R-local Milnor--Moore} for $r=1$, $R=\ZZ[1/P]$ and $\rho=N$. By assumption, $H_*(\Omega X;\ZZ)$ has no $p$-torsion for $p\notin P$, and so $H_*(\Omega X;R)$ is a free $R$-module. Also, $1/(N-1)!\in R$ since $P$ contains all primes $p<N$. Hence Theorem ~\ref{thm:R-local Milnor--Moore} applies for $d=\min(2N-1,N-1)=N-1$.
\end{proof}
\subsection{\texorpdfstring{$P$}{P}-local Hilton--Serre--Baues theorem}
In this section, we give conditions on a simply connected space $X$ so that $\Omega X \in \prod\mathcal{P}$ localised away from a finite set of primes $P$. The following result is a slight generalisation of the Hilton--Serre--Baues theorem \cite[Ch. V, Lemma (3.10)]{baues_book}, originally proved in the case where $2,3\in P$. Our proof is a modification of Anick's argument, see \cite[Theorem 4]{anick_loop}.
 
\begin{theorem}
\label{thm:R-local BHS}
    Let $X$ be a simply connected space of finite type.
    Let $P$ be a set of primes such that $\tau(\Omega X)\subseteq P$, i.e. $H_*(\Omega X;\ZZ)$ has no $p$-torsion for any prime $p\notin P$. Suppose $H_*(\Omega X;\mathbb{Z}[1/{P}])$ is generated as a $\ZZ[1/P]$-algebra by the image of the $\ZZ[1/P]$-local Hurewicz homomorphism. Then $\Omega X \in\prod\mathcal{P}$ localised away from $P$.
\end{theorem}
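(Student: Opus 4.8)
The plan is to follow Anick's strategy for the rational Hilton--Serre--Baues theorem, but using the $\ZZ[1/P]$-local Milnor--Moore theorem (Corollary~\ref{crl:R-local MM special}) as the algebraic engine. First I would set $R=\ZZ[1/P]$ and observe that the hypothesis says exactly that the $R$-local Hurewicz homomorphism surjects onto the algebra generators of $H_*(\Omega X;R)$; by Corollary~\ref{crl:R-local MM special} (applied in each range of degrees, letting $N\to\infty$ since $P$ is closed under no constraint beyond containing $\tau(\Omega X)$ and all small primes --- here I would note we may enlarge $P$ to include all primes below any given bound without loss, because the statement is about localisation away from $P$), the natural map $U(\pi_*(\Omega X)\otimes R)\to H_*(\Omega X;R)$ is surjective in all degrees, hence an isomorphism after passing to the primitively generated quotient Lie algebra; more precisely, $H_*(\Omega X;R)$ is a connected cocommutative Hopf algebra over $R$ which is generated by primitives, so it is a quotient of $U(L)$ where $L=PH_*(\Omega X;R)$ is the Lie algebra of primitives, and the surjectivity gives $H_*(\Omega X;R)\cong U(L)$.

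The second step is to choose, degree by degree, a basis of $L$ consisting of spherical classes (possible by the Hurewicz hypothesis together with the fact that $R$-local homotopy is detected by $R$-homology in the relevant sense), and for each basis element $x$ of degree $n$ pick a map $S^{n}_R\to\Omega X$ or, via adjunction, $S^{n+1}_R\to X$ representing it; assemble these into a map $\varphi\colon\prod_\alpha \Omega S^{n_\alpha+1}_R\to \Omega X$ using the loop-space multiplication (the James/Hilton--Milnor machinery over $R$, valid since $R\subseteq\QQ$). Then $\varphi$ induces on $R$-homology the multiplication map $\bigotimes_\alpha T(x_\alpha)\to H_*(\Omega X;R)$, which by the previous paragraph is surjective; a dimension/rank count in each degree (using finite type) together with the PBW theorem for $U(L)$ over the PID $R$ shows it is in fact an isomorphism on $R$-homology. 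Since source and target are simply connected (after looping, connected) $H$-spaces of finite type over $R$ and $\varphi$ is an $R$-homology isomorphism, it is an $R$-local homotopy equivalence by the $R$-local Whitehead theorem. Finally I would replace each $\Omega S^{n+1}_R$ by the corresponding space in $\mathcal{P}$: for $n+1$ even, $\Omega S^{2k}_R$ splits $R$-locally as $S^{2k-1}_R\times\Omega S^{4k-1}_R$ by the James--Toda fibration (this uses $2\in$ the inverted set or the EHP sequence; but in fact over $R=\ZZ[1/P]$ with $P$ possibly not containing $2$ one still has this splitting since it holds integrally for $\Omega S^{2k}$ localised at any prime away from obstructions --- here I would invoke that $\Omega S^{2k}\simeq S^{2k-1}\times\Omega S^{4k-1}$ holds after inverting nothing for the loop space up to the relevant James filtration, cf.\ the standard fact), so iterating, every factor becomes a product of odd spheres $S^{2j-1}_R$ and loops $\Omega S^{2j-1}_R$ with $2j-1\geq 3$, i.e.\ lies in $\prod\mathcal{P}$; and $\Omega S^m$ for $m\in\{4,8\}$ is itself $S^{m-1}\times\Omega S^{2m-1}$-split so is already accounted for.

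The main obstacle I expect is not the Hopf-algebra bookkeeping but ensuring that the chosen primitive generators can genuinely be realised by maps out of (localised) spheres and that these are compatible enough to build $\varphi$ with the claimed homology behaviour --- that is, promoting the algebraic splitting $H_*(\Omega X;R)\cong U(L)$ to a geometric splitting. This is precisely where Anick's original argument is delicate: one must work degree by degree, at each stage using that the partially constructed $\varphi$ is already an isomorphism below the current degree to see that the obstruction to extending lives in a group that vanishes $R$-locally (the relevant $\Ext$ or homotopy group being torsion of order invertible in $R$, using $\tau(\Omega X)\subseteq P$ and that $P$ contains all small primes). I would carry this out by induction on degree, citing Theorem~\ref{thm:R-local Milnor--Moore} for the identification of the target and the $R$-local Hilton--Milnor theorem for the structure of the source, and concluding via the $R$-local Whitehead theorem once the induction produces an $R$-homology isomorphism in all degrees. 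A secondary technical point is justifying the reduction ``enlarge $P$ to contain all primes $<N$ for every $N$'': since the final conclusion is a statement localised away from $P$, and enlarging $P$ only weakens it, I may assume at the outset that $P$ contains every prime up to any bound needed, or more cleanly, run the argument with $R=\ZZ[1/P]$ where $P$ already contains $\tau(\Omega X)$ and all primes below the connectivity-determined bounds, which is automatic in the application.
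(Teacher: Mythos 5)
Your proposal attempts to prove something strictly stronger than what the theorem asserts, and this is where it breaks. You try to build a direct $R$-local homotopy equivalence $\varphi\colon\prod_\alpha\Omega S^{n_\alpha+1}\to\Omega X$ and argue that the induced map on homology, $\bigotimes_\alpha T(x_\alpha)\to H_*(\Omega X;R)$, is an isomorphism ``by PBW and a rank count.'' This is false in general: surjectivity from a free tensor algebra is very far from injectivity, because $H_*(\Omega X;R)$ typically has relations that the tensor algebra does not. The most basic obstruction is that an odd-degree generator squares to zero in $H_*(\Omega X;R)$ (graded commutativity of the Hurewicz image in the finite-type case) but not in $T(x_\alpha)$. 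Concretely, for $X=\CC P^2$ one has $\Omega X\simeq S^1\times\Omega S^5$ and $H_*(\Omega X;\QQ)=\Lambda(x_1)\otimes\QQ[y_4]$, whereas $T(x_1,y_4)$ contains a nonzero class $x_1^2$ in degree $2$. So $\Omega X$ is not a product of loop spaces of spheres, and your claimed isomorphism fails. The class $\mathcal{P}$ is designed precisely to accommodate odd-sphere factors in addition to looped spheres, but a product of $\Omega S^{n_\alpha+1}$'s cannot produce those.

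The paper's proof sidesteps this entirely by only ever requiring a \emph{surjection}, and then using the suspend-and-retract trick. One builds $\overline f\colon\Omega\bigl(\bigvee S^{n_i+1}\bigr)\to\Omega X$ from a set of spherical generators (James extension plus Bott--Samelson gives surjectivity on $R$-homology), suspends, and shows that $\Sigma\Omega X$ has $R$-free homology whose every class is in the Hurewicz image, hence $\Sigma\Omega X$ is a wedge of spheres after $R$-localisation by the Whitehead theorem. Then $\Omega\Sigma\Omega X\in\prod\mathcal P$ by Hilton--Milnor, and since $\Omega X$ is an $H$-space it retracts off $\Omega\Sigma\Omega X$; Lemma~\ref{lem:PTclosedunderret}(1) finishes. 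No identification of $H_*(\Omega X;R)$ as a $U(L)$, no PBW, no degree-by-degree obstruction theory. Notice this is why the theorem is stated as it is: the hypothesis is Hurewicz-image generation, and that surjection is all one needs.

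Two further points. First, your invocation of Corollary~\ref{crl:R-local MM special} is out of place: that corollary serves to \emph{establish} Hurewicz-image generation in Proposition~\ref{prp:bounded generation imply anick}, whereas in Theorem~\ref{thm:R-local BHS} it is an assumption; using it here is circular in spirit and unnecessary. Second, the ``enlarge $P$ to include all primes below any bound'' manoeuvre is not free of charge: the conclusion is a statement localised away from the given $P$, and enlarging $P$ genuinely weakens it (in the limit you only recover the rational statement). In the paper's argument no such enlargement is needed, and indeed the theorem makes no hypothesis that $P$ contain small primes. Finally, the Milnor--Moore identification $H_*(\Omega X;R)\cong U(PH_*(\Omega X;R))$ over a subring $R\subsetneq\QQ$ is not automatic; over $\ZZ[1/P]$ one only has the degree-bounded surjectivity of Theorem~\ref{thm:R-local Milnor--Moore}, not a global Hopf-algebra isomorphism.
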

\begin{proof}
Localise away from the primes in $P$.  By assumption, $H_*(\Omega X;\mathbb{Z}[1/P])$ is torsion free.

Since $X$ is of finite type, each group $\pi_n(X)$ is finitely generated. By assumption, $H_*(\Omega X;\ZZ[1/P])$ is multiplicatively generated by a set of elements $\{x_i:i\in\mathcal{I}\}$, and there are finitely many generators in each degree. Moreover, $x_i=(f_{i})_*([S^{n_i}])\in H_{n_i}(\Omega X;\ZZ[1/P])$ for some maps $f_i:S^{n_i}\to\Omega X$. Let $f=\vee_if_i:\bigvee_{i\in\mathcal{I}} S^{n_i} \rightarrow \Omega X$. By the universal property of the James construction, there is a unique extension of $f$ to an $H$-map $\overline{f}:\Omega \left(\bigvee_{i\in\mathcal{I}} S^{n_i +1}\right) \rightarrow \Omega X$. Since $H_*(\Omega X;\mathbb{Z}[1/P])$ is generated by $\{x_i\}_{i \in \mathcal{I}}$, $\overline{f}$ induces a surjection on homology with $\ZZ[1/P]$ coefficients by the Bott-Samelson theorem. 
    
Suspending, there is a homotopy equivalence $\Sigma \Omega( \bigvee_{i\in\mathcal{I}} S^{n_i +1}) \simeq W$ for some finite type wedge of spheres $W$, and up to this homotopy equivalence, the map $\Sigma \overline{f}:\Sigma \Omega( \bigvee_{i\in\mathcal{I}} S^{n_i +1}) \rightarrow \Sigma \Omega X$ becomes a map $g:W \rightarrow \Sigma \Omega X$ which induces a surjection on homology with $\ZZ[1/P]$ coefficients. Consider the commutative diagram \[\begin{tikzcd}
	{\pi_n(W) \otimes \mathbb{Z}[1/P]} & {\pi_n(\Sigma \Omega X) \otimes \mathbb{Z}[1/P]} \\
	{H_n(W) \otimes \mathbb{Z}[1/P]} & {H_n(\Sigma \Omega X) \otimes \mathbb{Z}[1/P],}
	\arrow[from=1-1, to=1-2]
	\arrow["{h_W \otimes 1}", from=1-1, to=2-1]
	\arrow["{h_{\Sigma \Omega X} \otimes 1}", from=1-2, to=2-2]
	\arrow["{g_* \otimes 1}", from=2-1, to=2-2]
\end{tikzcd}\] where $h_W$ and $h_{\Sigma \Omega X}$ are the Hurewicz homomorphisms for $W$ and $\Sigma \Omega X$ respectively. Since $W$ is a wedge of spheres, $h_W \otimes 1$ is surjective. Commutativity of the diagram with surjectivity of $g_* \otimes 1$ implies that $h_{\Sigma \Omega X} \otimes 1$ is surjective. Since $H_*(\Sigma \Omega X;\ZZ[1/P])$ is a free $\ZZ[1/P]$-module, for each generator $y$ of a $\ZZ[1/P]$ summand, there is a map $f_y:S^{n_y} \rightarrow \Sigma \Omega X$ which maps a generator of $H_{n_y}(S^{n_y})$ to $y$. Let $h =\vee_y f_y: \bigvee_{y} S^{n_y} \rightarrow \Sigma \Omega X$. By definition of $h$, $h$ induces an isomorphism on homology with coefficients in $\ZZ[1/P]$, and so Whitehead's theorem implies $h$ is a homotopy equivalence, Looping, the Hilton-Milnor theorem implies that $\Omega \Sigma \Omega X \in \prod\mathcal{P}$, but $\Omega X$ retracts off $\Omega \Sigma \Omega X$, implying by part (1) of Lemma ~\ref{lem:PTclosedunderret} that $\Omega X \in\prod\mathcal{P}$.
\end{proof}

The following proposition gives a sufficient condition for $\Omega X\in\prod\mathcal{P}$ localised away from $P$.

\begin{proposition}
\label{prp:bounded generation imply anick}
    Suppose that $X$ is a simply connected space of finite type, $Q$ is a set of primes and $N\geq 2$ is an integer such that
    \begin{itemize}
        \item For any $p\notin Q$, the $\FF_p$-algebra $H_*(\Omega X;\FF_p)$ is generated by elements of degree less than $N$.
    \end{itemize}
    Then $\Omega X\in\prod\mathcal{P}$, localised away from $P=Q\cup\tau(\Omega X)\cup\{\text{prime }p:~p<N\}$. 
\end{proposition}
\begin{proof}
Since $\tau(\Omega X)\subset P$, the algebra $H_*(\Omega X;\ZZ[1/P])$ is a free $\ZZ[1/P]$-module. By Corollary \ref{crl:R-local MM special} and $\{p:p<N\}\subset P$, the homology groups $H_{<N}(\Omega X;\ZZ[1/P])$ are in the image of the map $U(\pi_*(\Omega X)\otimes\ZZ[1/P]) \rightarrow H_{*}(\Omega X;\ZZ[1/P])$ induced by the Hurewicz homomorphism. On the other hand, by assumption and Lemma \ref{lemma:Zp implies ZP for loop homology}, the $\ZZ[1/P]$-algebra $H_*(\Omega X;\ZZ[1/P])$ is multiplicatively generated by $H_{<N}(\Omega X;\ZZ[1/P])$. It follows that this algebra is multiplicatively generated by the Hurewicz image. Thus Theorem \ref{thm:R-local BHS} implies $\Omega X \in \prod\mathcal{P}$ localised away from $P$.
\end{proof}

Note that the proposition is always applicable for $Q=\{\text{all primes}\}$. This reproves the classical result that $\Omega X\in\prod\mathcal{P}$ after rationalization. In our applications, $P$ will be finite.

\section{Loop spaces of moment-angle complexes}
\label{sec:LoopMAC}

\subsection{Algebraic preliminaries} 
In this section, $\K$ is a simplicial complex on $[m]$
without ghost vertices, and $\k$ is a field.
The $\Zm$-graded $\k$-algebra
$$\k[\K]:=\k[v_1,\dots,v_m]/(\prod_{i\in J}v_i=0,~J\notin\K),\quad\deg v_i:=2e_i\in\Zm$$
is called the \emph{Stanley--Reisner ring} associated with $\K$. For a multiindex $\alpha=\sum_{i=1}^m\alpha_ie_i\in\Zm$, we denote $|\alpha|:=\sum_i\alpha_i$. A subset $J\subseteq[m]$ is identified with the multiindex $\sum_{j\in J}e_j\in\Zm$.

A subset $I\subseteq[m]$ is called a \emph{(minimal) missing face} if $I\notin\K$, but $I\setminus\{i\}\in\K$ for any $i\in I$. We denote by $\MF(\K)$ the set of missing faces of $\K$. The ring $\k[\K]$ is a quotient of the polynomial ring by the ideal generated by monomials $v^I$, $I\in\MF(\K)$. Since $\K$ has no ghost vertices, the monomials $v^I$ are not linear.

If $A$ is a connected graded $\k$-algebra of finite type, we can compute the $\Tor$-functor $\Tor^A(\k,\k)$ as the homology of the bar construction $\overline{\mathrm{B}}(A)$ and the $\Ext$-algebra $\Ext_A(\k,\k)$ as the homology of the cobar construction $\overline{\mathrm{C}}(A)$, see \cite[\S 1]{priddy}. These complexes are dual, so for each $n\geq0 $ we obtain a duality $(\Tor^A_n(\k,\k))^*\cong\Ext^n_A(\k,\k)$ of graded $\k$-modules.

For a subset $J\subseteq[m]$, we consider the full subcomplex $\K_J:=\{I\subseteq J:I\in\K\}$ of $\K$, a simplicial complex on the vertex set $J$. Naturally, $\k[\K_J]\subseteq\k[\K]$ is the direct sum of all graded components $\k[\K]_{2\alpha}$, $\alpha\in\Zm$, such that  $\alpha_i=0$ for $i\notin J$.
\begin{lemma}
\label{lmm:tor kK passing to subcomplex}
    Let $\alpha\in\Zm$ and suppose that $\alpha_i=0$ for $i\notin J$. Then $\Tor^{\k[\K]}_{n,2\alpha}(\k,\k)\cong\Tor^{\k[\K_J]}_{n,2\alpha}(\k,\k)$ for all $n\geq 0$.
\end{lemma}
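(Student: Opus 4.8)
The plan is to reduce the computation of $\Tor^{\k[\K]}(\k,\k)$ in the multidegree $2\alpha$ (with $\alpha$ supported on $J$) to a computation that only sees the monomials of $\k[\K]$ supported on $J$, i.e. the subring $\k[\K_J]$. First I would choose an explicit multigraded resolution to compute $\Tor$. The most convenient is the (normalized) bar resolution, or equivalently the bar construction $\overline{\mathrm{B}}(\k[\K])$ discussed in the excerpt: its degree-$n$ part is spanned by tensors $[a_1|\cdots|a_n]$ of positive-degree monomials $a_1,\dots,a_n$ of $\k[\K]$, and it carries a $\Zm$-grading for which the multidegree of $[a_1|\cdots|a_n]$ is the sum of the multidegrees of the $a_i$. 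The key combinatorial observation is that if $a_1,\dots,a_n$ are monomials whose multidegrees sum to $2\alpha$, and $\alpha_i=0$ for $i\notin J$, then each $a_k$ is itself supported on $J$, hence lies in $\k[\K_J]\subseteq\k[\K]$ (this uses that all exponents are nonnegative, so there is no cancellation).

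Next I would note that the differential of the bar construction is multidegree-preserving, and that the bar differential applied to a tensor of monomials supported on $J$ only involves the products $a_ka_{k+1}$ computed in $\k[\K]$; but since $a_k,a_{k+1}$ are supported on $J$, the product $a_ka_{k+1}$ agrees with the product computed in $\k[\K_J]$ (the defining relations of $\k[\K_J]$ are exactly the relations of $\k[\K]$ involving only variables $v_i$, $i\in J$, namely $v^I=0$ for $I\in\MF(\K)$ with $I\subseteq J$, and these are precisely the ones that can be triggered by multiplying monomials supported on $J$). Therefore the multidegree-$2\alpha$ component of $\overline{\mathrm{B}}(\k[\K])$ coincides, as a complex, with the multidegree-$2\alpha$ component of $\overline{\mathrm{B}}(\k[\K_J])$. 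Taking homology gives $\Tor^{\k[\K]}_{n,2\alpha}(\k,\k)\cong\Tor^{\k[\K_J]}_{n,2\alpha}(\k,\k)$ for all $n$, as claimed. Passing through the $\Tor$–$\Ext$ duality noted in the excerpt, one gets the analogous statement for $\Ext$ as well, though only the $\Tor$ form is asserted here.

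The only real content is the combinatorial statement that a monomial factorization of a $J$-supported multidegree uses only $J$-supported factors, together with the compatibility of the two Stanley--Reisner relation ideals; everything else is the functoriality of the bar construction. I expect the main (minor) obstacle to be bookkeeping: one must be careful that the $\Zm$-grading is by the sum of the exponent vectors and that $\deg v_i=2e_i$, so that ``multidegree $2\alpha$ supported on $J$'' really does force every monomial factor to be supported on $J$; and one should make explicit that the inclusion $\k[\K_J]\hookrightarrow\k[\K]$ of multigraded algebras induces on bar constructions an isomorphism onto the subcomplex of $J$-supported multidegrees, so that the induced map on $\Tor$ is the claimed isomorphism in the relevant multidegrees. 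A cleaner alternative, if one prefers to avoid the bar construction, is to observe that $\k[\K]$ is free as a $\k[\K_J]$-module on the monomials not supported on $J$ and run a change-of-rings spectral sequence, but the bar-construction argument is the most direct and keeps track of the multigrading automatically.
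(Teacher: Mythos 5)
Your proposal is correct and follows essentially the same approach as the paper: both compute $\Tor$ via the bar construction, observe that in multidegree $2\alpha$ with $\alpha$ supported on $J$ each bar tensor factor is necessarily supported on $J$, and conclude that the multidegree-$2\alpha$ subcomplexes of $\overline{\mathrm{B}}(\k[\K])$ and $\overline{\mathrm{B}}(\k[\K_J])$ agree. Your additional remarks on the compatibility of the products and relation ideals are exactly the bookkeeping the paper leaves implicit.
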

\begin{proof}[Proof]
    We compute the $\Tor$ functor as the homology of the bar construction $\overline{\mathrm{B}}(\k[\K])$, which has the following direct sum decomposition:
    the graded component $\Tor_{*,2\alpha}$ is computed as homology of the chain complex spanned by elements of the form $[v^{\alpha_{(1)}}\mid \dots\mid v^{\alpha_{(k)}}] $ such that
 $$k\geq 0,\quad \alpha_{(t)}\in\Zm\setminus\{0\},\quad v^{\alpha_{(t)}}\neq 0\in\k[\K],\quad \sum_{t=1}^k\alpha_{(t)}=\alpha.$$
    Since $\alpha_i=0$ for $i\notin J$, $(\alpha_{(t)})_i=0$ for $i\notin J$.
    Therefore, the chain complex is the same for $\K$ and $\K_J$, and the claim follows.
\end{proof}

\subsection{Backelin--Berglund polynomials}
\label{subsec:bb polynomials}
Let $\K$ be a simplicial complex on the vertex set $[m]$ and let $\k$ be a field.
Following Backelin \cite{backelin} and Berglund \cite{berglund}, we consider the Poincar\'e series for the $\Tor$ functor,
$$\mathrm{P}_{\k[\K]}(x_1,\dots,x_m,z):=\sum_{n\geq 0,\alpha\in\Zm}\dim_\k \Tor^{\k[\K]}_{n,2\alpha}(\k,\k)\cdot x^\alpha z^n\in\ZZ[[x_1,\dots,x_m,z]].$$
Note that we use the convention $\deg v_i=2e_i\in\Zm$.
\begin{definition}
Consider the formal power series
\begin{equation}
\label{eqn:definition of b}
\mathrm{b}_{\k[\K]}(x_1,\dots,x_m,z):=\prod_{i=1}^m(1+x_iz) / \mathrm{P}_{\k[\K]}(x_1,\dots,x_m,z)
\end{equation}
and the expansion
$\mathrm{b}_{\k[\K]}(x_1,\dots,x_m,z)=\sum_{\alpha\in\Zm} \mathrm{b}_{\k[\K],\alpha}(z)\cdot x^\alpha,$ $\mathrm{b}_{\k[\K],\alpha}(z)\in\ZZ[[z]].$
We define $$bb_{\K,\k}(z):=\mathrm{b}_{\k[\K],[m]}(z)\in\ZZ[[z]],\quad [m]=\sum_{i=1}^m e_i,$$ and call $bb_{\K,\k}(z)$ the \emph{Backelin--Berglund polynomial} corresponding to $\K$ and $\k$. For the simplicial complex $\{\varnothing\}$ on zero vertices, we formally set $bb_{\{\varnothing\},\k}(z):=1$.
\end{definition}
\begin{proposition}
\label{prp:bb are polynomials}
Let $\K$ be a simplicial complex on the vertex set $[m]$, and let $\k$ be a field.
\begin{enumerate}
    \item $bb_{\K,\k}(z)\in\ZZ[[z]]$ is a polynomial of degree $\leq m$.
    \item
    $$\mathrm{b}_{\k[\K]}(x_1,\dots,x_m,z)=\sum_{J\subseteq[m]}bb_{\K_J,\k}(z)x^J.$$
\end{enumerate}
\end{proposition}
\begin{proof}
By \cite{backelin} (or see \cite[Theorem 5']{backelin_roos}), the formal power series $\mathrm{b}_{\k[\K]}(x_1,\dots,x_m,z)$ is a polynomial on $x_i$ and $z$ which has $x_i$-degree at most $1$ and $z$-degree at most $m$. Hence,
$$\mathrm{b}_{\k[\K]}(x_1,\dots,x_m,z)=\sum_{J\subseteq[m]}\mathrm{b}_{\k[\K],J}(z)x^J,\quad\mathrm{b}_{\k[\K],J}(z)\in\ZZ[z],~\deg\mathrm{b}_{\k[\K],J}\leq m.$$
Since $bb_{\K,\k}:=\mathrm{b}_{\k[\K],[m]}$, this proves (1). 

To prove (2), it remains to show that $\mathrm{b}_{\k[\K],J}(z)=\mathrm{b}_{\k[\K_J],J}(z)$. We first consider the expansions
$$\mathrm{P}_{\k[\K]}(x,z)=1+\sum_{\alpha\in\Zm\setminus\{0\}} g_{\K,\k,\alpha}(z)x^\alpha,\quad 1/\mathrm{P}_{\k[\K]}(x,z)=1+\sum_{\alpha\in\Zm\setminus\{0\}}h_{\K,\k,\alpha}(z)x^\alpha.$$
By the inversion formula,
$$h_{\K,\k,\alpha}=\sum_{n\geq 0}(-1)^n\sum_{\alpha=\alpha_1+\dots+\alpha_n}g_{\K,\k,\alpha_1}\cdot\dotso\cdot g_{\K,\k,\alpha_n},$$
hence
$$h_{\K,\k,J}=\sum_{n\geq 0}(-1)^n\sum_{J=J_1\sqcup\dots\sqcup J_n}g_{\K,\k,J_1}\cdot\dotso\cdot g_{\K,\k,J_n}$$
for any $J\subseteq [m]$.
By Lemma \ref{lmm:tor kK passing to subcomplex}, $g_{\K,\k,J_i}=g_{\K_{J_i},\k,J_i}=g_{\K_J,\k,J_i}$ whenever $J_i\subseteq J\subseteq[m]$. It follows that
\begin{equation}
\label{eqn:h_KkJ equals h_KJkJ}
h_{\K,\k,J}=h_{\K_J,\k,J}\text{ for any }J\subseteq [m].
\end{equation}
Multiplying the expansion of $1/\mathrm{P}_{\k[\K]}$ by $\prod_{i=1}^m (1+x_iz)=\sum_{L\subseteq[m]}z^{|L|}x^L$, we obtain:
$$\sum_\alpha\mathrm{b}_{\k[\K],\alpha}(z)x^\alpha = \sum_{\alpha=L+\beta} z^{|L|}h_{\K,\k,\beta}(z)\cdot x^\alpha.\text{ Hence }\mathrm{b}_{\k[\K],J}(z)=\sum_{J=L\sqcup M}z^{|L|}h_{\K,\k,M}(z).$$
Since $h_{\K,\k,M}=h_{\K_M,\k,M}=h_{\K_J,\k,M}$ by \eqref{eqn:h_KkJ equals h_KJkJ}, $\mathrm{b}_{\k[\K],J}(z)=\mathrm{b}_{\k[\K_J],J}(z)=:bb_{\K_J,\k}(z)$, as required.
\end{proof}
\begin{definition}
\label{def:bb reflected}
Define the \emph{reflected Backelin--Berglund polynomials} by $$\widehat{bb}_{\K,\k}(t):=t^mbb_{\K,\k}(t^{-1})\in\ZZ[t].$$
Since $bb_{\K,\k}(z)$ is a polynomial of degree $\leq m$, the same holds for $\widehat{bb}_{\K,\k}(t)$. As an example, if $bb_{\K,\k}(z)=5z^m+z-2$, then $\widehat{bb}_{\K,\k}(t)=5+t^{m-1}-2t^m$.
\end{definition}
\subsection{Berglund's formula}
In \cite{berglund}, Berglund determined Poincar\'e series of Ext-algebras of arbitrary monomial rings; in particular, this allows us to compute the polynomials $bb_{\K,\k}$ in terms of Betti numbers of explicit simplicial complexes. Berglund's constructions involve the minimal set of monomials $M$ that generate the monomial ideal, and its subsets $S$. In the case of Stanley--Reisner rings, elements of $M$ are precisely monomials of the form $v^I$, where $I\in\MF(\K)$ are minimal missing faces of $\K$. Here we give an exposition of Berglund's results using the language of simplicial complexes. For example, a squarefree monomial $v^I$ corresponds to a subset $I\subseteq[m]$, hence the least common multiple $\mathbf{m}_S$ of a set of monomials $\{v^I:I\in S\}$ is the monomial $v^{\cup S}$, where $\cup S:=\bigcup_{I\in S}I\subseteq[m]$.

Fix a simplicial complex $\K$ on the vertex set $[m]$, and a field $\k$. Let $S$ be a collection of subsets of $[m]$, i.e. $S\subseteq 2^{[m]}$. Define a simple graph $G_S$ which has vertex set given by $S$, and $I,J \subseteq [m]$ are connected by an edge if and only if $I \cap J \neq \varnothing$. We say that $S$ is \emph{connected} if $G_S$ is a connected graph. Connected components of the graph $G_S$ are of the form $G_S=G_{S_1}\sqcup\dots\sqcup G_{S_r}$, $S=S_1\sqcup \dots\sqcup S_r$, and we say that $S_1,\dots,S_r$ are the connected components of $S$. Denote by $c(S)$ the number of connected components of $S$. For example, the collection $S=\{\{1,2\},\{2,3\},\{4,5,6\}\}$ has two connected components $\{\{1,2\},\{2,3\}\}$ and $\{\{4,5,6\}\}$, so $c(S)=2$.

A subset $S\subseteq\MF(\K)$ is \emph{saturated} if, for any connected subset $T\subseteq S$, the following condition holds: if $I\in \MF(\K)$ and $I\subseteq \cup T$, then $I\in S$. Denote by $K(\K)$ the set of non-empty saturated subsets of $\MF(\K)$. Let $S\subseteq\MF(\K)$ be a subset, and let $S=S_1\sqcup\dots\sqcup S_r$ be its decomposition into connected components. Consider the following simplicial complex on the vertex set $S$:
$$\Delta'_S:=\{R\subseteq S:~\cup R\neq\cup S,~\text{ or }S_i\cap R\text{ is disconnected for some }i\in\{1,\dots,r\}\}.$$ Berglund's theorem can be stated as follows.

\begin{theorem}[{\cite[Theorem 1]{berglund}}]
\label{thm:berglund_formula}
For a field $\k$, 
$$\mathrm{b}_{\k[\K]}(x_1,\dots,x_m,z)=1+\sum_{S\in K(\K)}x^{\cup S}\cdot (-z)^{c(S)+2}\cdot F(\H_*(\Delta'_S;\k);z)\in\ZZ[[x_1,\dots,x_m,z]].$$
In particular, for any non-empty $J\subseteq [m]$,
$$bb_{\K_J,\k}(z)=\sum_{\begin{smallmatrix}
S\in K(\K):\\
\cup S=J
\end{smallmatrix}}(-z)^{c(S)+2}\cdot F(\H_*(\Delta'_S;\k);z)\in\ZZ[z].\eqno\qed$$
\end{theorem}
\begin{remark}
The sets $\{S\in K(\K):\cup S=J\}$ and $\{S\in K(\K_J):\cup S=J\}$ coincide. Since $bb_{\K_J,\k}(z):=\mathrm{b}_{\k[\K_J],J}(z)$, this gives another proof of the formula $\mathrm{b}_{\k[\K],J}(z)=\mathrm{b}_{\k[\K_J],J}(z)$.
\end{remark}

\subsection{Loop homology and reflected BB polynomials}
Homological algebra of Stanley--Reisner rings is related to loop homology of polyhedral products via the following theorem. Recall that $\DJ(\K):=(\CC P^\infty,\ast)^\K$ is the Davis--Januszkiewicz space and $\ZK:=(D^2,S^1)^\K$ is the moment-angle complex corresponding to $\K$.
\begin{theorem}[{originally \cite[(8.4)]{panov_ray}, see \cite[Theorem 1.1]{vylegzhanin22} or \cite[Proposition 6.5]{franz}}]
\label{thm:loophomologydjk}
Let $\K$ be a simplicial complex on $[m]$ and $\k$ be a principal ideal domain. There is an isomorphism of graded $\k$-algebras
$$H_*(\Omega\DJ(\K);\k)\cong\Ext_{\k[\K]}(\k,\k),\quad H_n(\Omega\DJ(\K);\k)\cong\bigoplus_{n=-i+2\alpha}\Ext^i_{\k[\K]}(\k,\k)_{2\alpha}.\eqno\qed$$
\end{theorem}
Theorem ~\ref{thm:loophomologydjk} implies that the loop homology of $\DJ(\K)$ admits a $\ZZ\times\Zm$-grading, given by $H_{-i,2\alpha}(\Omega\DJ(\K);\k):=\Ext^i_{\k[\K]}(\k,\k)_{2\alpha}$. We consider the multigraded Poincar\'e series
$$F(H_*(\Omega\DJ(\K);\k);t,\lambda):=\sum_{n\geq 0}\sum_{\alpha\in\Zm}\dim_\k H_{-n,2\alpha}(\Omega\DJ(\K);\k)t^{-n}\lambda^{2\alpha}\in\ZZ[[t^{-1},\lambda_1^2,\dots,\lambda_m^2]];$$
the usual Poincar\'e series $F(H_*(\Omega\DJ(\K);\k);t)$ is recovered by the substitution $t=\lambda_1=\dots=\lambda_m$. 

The algebra $H_*(\OZK;\k)$ admits a $\ZZ\times\Zm$-multigrading induced from the ambient algebra $H_*(\Omega\DJ(\K);\k)$, see \cite[Proposition 3.7]{vylegzhanin}. The multigraded Poincar\'e series of $H_*(\Omega \ZK;\k)$, $F(H_*(\OZK;\k);t,\lambda)\in\ZZ[[t^{-1},\lambda_1^2,\dots,\lambda_m^2]]$, is defined similarly and satisfies the identity
\begin{equation}
\label{eq:poincare series for odj and ozk}
F(H_*(\Omega\DJ(\K);\k);t,\lambda)=F(H_*(\OZK;\k);t,\lambda)\cdot \prod_{i=1}^m(1+t^{-1}\lambda_i^2),
\end{equation}
which is obtained using the split fibration $\OZK\to\Omega\DJ(\K)\to(S^1)^m$ from Theorem \ref{thm:uxasplitfib}.
\begin{proposition}
\label{prp:bb polynomials and loop homology of zk}
For a field $\k$ and a simplicial complex $\K$ on $[m]$, 
$$F(H_*(\Omega\DJ(\K);\k);t,\lambda)=\mathrm{P}_{\k[\K]}(\lambda_1^2,\dots,\lambda_m^2,t^{-1})\in\ZZ[[t^{-1},\lambda_1^2,\dots,\lambda_m^2]],$$
$$1/F(H_*(\OZK;\k);t,\lambda)=\sum_{J\subseteq[m]}bb_{\K_J,\k}(t^{-1})\lambda^{2J}=\sum_{J\subseteq[m]}\widehat{bb}_{\K_J,\k}(t)t^{-|J|}\lambda^{2J}.$$
In particular, the ordinary Poincar\'e series satisfy $F(H_*(\Omega\DJ(\K);\k);t)=\mathrm{P}_{\k[\K]}(t^2\dots,t^2,t^{-1})$ and
$$1/F(H_*(\OZK;\k);t)=\sum_{J\subseteq[m]}\widehat{bb}_{\K_J,\k}(t)t^{|J|}\in\ZZ[t].$$
\end{proposition}
\begin{proof}
The formulas for $\DJ(\K)$ follow from the isomorphism $H_{-n,2\alpha}(\Omega\DJ(\K);\k)\cong\Ext^n_{\k[\K]}(\k,\k)_{2\alpha}$ of Theorem~\ref{thm:loophomologydjk} and from the duality between $\Ext_{\k[\K]}(\k,\k)$ and $\Tor^{\k[\K]}(\k,\k)$. The formula for $1/F(H_*(\OZK;\k);t)$ follows from \eqref{eqn:definition of b}, \eqref{eq:poincare series for odj and ozk} and part (2) of Proposition \ref{prp:bb are polynomials}.
\end{proof}
\begin{proposition}
\label{prp:bb polynomials for flag complexes}
If $\K$ is a flag complex, then $\widehat{bb}_{\K,\k}(t)=1-\chi(\K)$, and $\widehat{bb}_{\K_J,\k}(t)=1-\chi(\K_J)$.
\end{proposition}
\begin{proof}
By \cite[Theorem 4.8]{vylegzhanin22}
$$1/F(H_*(\OZK;\k);t,\lambda)=\sum_{J\subseteq[m]}(1-\chi(\K_J))t^{-|J|}\lambda^{2J}$$
whenever $\K$ is a flag complex, so the statement follows from Proposition ~\ref{prp:bb polynomials and loop homology of zk}. (A simpler proof can be given using the multigraded version of the \emph{Fr\"oberg formula} which relates Poincar\'e series of $H_*(\Omega\DJ(\K);\k)$ and $\k[\K]$ in the flag case, see \cite[Proposition 8.5.4]{BP15}.)
\end{proof}

\subsection{FPT property of moment-angle complexes}
We first give an upper bound on the size of torsion in homology of finite simplicial complexes. If $A$ is an abelian group, denote by $\Tors A$ its torsion subgroup.
\begin{theorem}[{\cite[Lemma 3]{hkp}}]
\label{thm:hkp_torsion_bound}
Let $\K$ be a $d$-dimensional simplicial complex on $k$ vertices, $d\geq 2.$ Then $|\Tors H_{d-1}(\K;\ZZ)|\leq \sqrt{d+1}^{\binom{k}{d}}.$ \qed
\end{theorem}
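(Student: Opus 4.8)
The plan is to pass from homology to the integer matrix of the simplicial boundary operator $\partial_d\colon C_d(\K;\ZZ)\to C_{d-1}(\K;\ZZ)$ and to estimate its maximal minors via Hadamard's inequality.

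First I would pin down the torsion. With $Z_{d-1}=\Ker\partial_{d-1}$ and $B_{d-1}=\Img\partial_d$ one has $H_{d-1}(\K;\ZZ)=Z_{d-1}/B_{d-1}$, and since $C_{d-1}(\K;\ZZ)/Z_{d-1}\cong\Img\partial_{d-1}$ is a subgroup of a free group, the short exact sequence $0\to H_{d-1}(\K;\ZZ)\to\Coker\partial_d\to C_{d-1}(\K;\ZZ)/Z_{d-1}\to 0$ splits; hence $\Tors H_{d-1}(\K;\ZZ)\cong\Tors(\Coker\partial_d)$. Let $M$ be the matrix of $\partial_d$ in the standard simplicial bases and let $r$ be its rank. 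By the Smith normal form, $|\Tors(\Coker M)|$ equals the $r$-th determinantal divisor of $M$, i.e.\ the gcd of all $r\times r$ minors of $M$; in particular it is at most the largest absolute value of an $r\times r$ minor of $M$.

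Next I would bound $r$ and the minors. The number of rows of $M$ is the number of $(d-1)$-simplices of $\K$, which is at most $\binom{k}{d}$, so $r\leq\binom{k}{d}$. Each column of $M$ corresponds to a $d$-simplex and therefore has exactly $d+1$ nonzero entries, each equal to $\pm1$; so every column of any $r\times r$ submatrix $N$ of $M$ has Euclidean length at most $\sqrt{d+1}$. Hadamard's inequality then gives $|\det N|\leq\prod_j\lVert N_{\bullet j}\rVert_2\leq(\sqrt{d+1})^{\,r}\leq(\sqrt{d+1})^{\binom{k}{d}}$, and combining this with the previous paragraph yields $|\Tors H_{d-1}(\K;\ZZ)|\leq\sqrt{d+1}^{\,\binom{k}{d}}$.

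The argument is short, so the only things to be careful about are the bookkeeping choices that make the constant come out right. One must estimate each $r\times r$ minor along its \emph{columns} (each carrying only $d+1$ unit entries) while bounding $r$ by the number of \emph{rows} ($\leq\binom{k}{d}$); estimating along rows would instead give the weaker bound $(\sqrt{k-d})^{\binom{k}{d}}$. One must also check that the determinantal divisor computes $\Tors H_{d-1}$ and not some larger group, which is exactly what the splitting $\Coker\partial_d\cong H_{d-1}(\K;\ZZ)\oplus(\text{free})$ provides. The hypothesis $d\geq 2$ is not used in the argument: for $d\leq 1$ the group $H_{d-1}(\K;\ZZ)$ is free and the inequality is trivial, so $d\geq 2$ merely delimits the interesting range.
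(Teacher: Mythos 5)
Your proof is correct, and it is precisely the standard argument used in the cited reference [HKP17, Lemma 3] (a Hadamard-inequality bound on the maximal minors of the boundary matrix $\partial_d$, combined with the Smith normal form description of $\Tors(\Coker\partial_d)\cong\Tors H_{d-1}(\K;\ZZ)$); the paper itself gives no proof but defers to that source with a \qed, so there is nothing to contrast. The details check out: the splitting $\Coker\partial_d\cong H_{d-1}\oplus(\text{free})$ holds because $\Img\partial_{d-1}$ is free, the rank is bounded by the row count $\leq\binom{k}{d}$, each column of any square submatrix has $\ell^2$-norm at most $\sqrt{d+1}$, and the column-wise Hadamard estimate gives the stated bound.
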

\begin{corollary}
\label{c:simp_torsion_bound}
Let $\K$ be a simplicial complex on $k'\leq k$ vertices. Then $\max\tau(\K)\leq \sqrt{k+1}^{\binom{k}{\lfloor k/2\rfloor}}.$ 
\end{corollary}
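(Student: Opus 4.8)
The plan is to bound $\max\tau(\K)$ by applying Theorem \ref{thm:hkp_torsion_bound} to each homological degree and to every relevant skeleton of $\K$, then taking the maximum of the resulting bounds. First I would observe that if a prime $p$ divides $|\Tors H_i(\K;\ZZ)|$ for some $i$, then it divides the order of the torsion subgroup of $H_i(\sk_{i+1}\K;\ZZ)$, where $\sk_{i+1}\K$ denotes the $(i+1)$-skeleton; indeed $H_i$ of a complex depends only on its $(i+1)$-skeleton. So it suffices to bound $|\Tors H_{d-1}(\L;\ZZ)|$ over all subcomplexes $\L=\sk_d\K$ with $2\le d\le k-1$ (the cases $d\le 1$ contribute no torsion, since $H_0$ is free and $H_1$ of a graph is free), and $\L$ has at most $k$ vertices.

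The key step is then a direct application of Theorem \ref{thm:hkp_torsion_bound}: for a $d$-dimensional complex $\L$ on $k'\le k$ vertices with $d\ge 2$, we get $|\Tors H_{d-1}(\L;\ZZ)|\le\sqrt{d+1}^{\binom{k'}{d}}\le\sqrt{k+1}^{\binom{k}{d}}$, using that $\sqrt{d+1}\le\sqrt{k+1}$ and $\binom{k'}{d}\le\binom{k}{d}$ as both are monotone in the obvious way. Since any prime appearing in $\tau(\K)$ is at most the largest such torsion order, we obtain $\max\tau(\K)\le\max_{2\le d\le k-1}\sqrt{k+1}^{\binom{k}{d}}=\sqrt{k+1}^{\binom{k}{\lfloor k/2\rfloor}}$, because the binomial coefficient $\binom{k}{d}$ is maximised at $d=\lfloor k/2\rfloor$ and the base $\sqrt{k+1}>1$ makes the exponential increasing in the exponent.

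The only mild subtlety — and the place I expect to spend a sentence being careful — is the reduction to skeleta and the handling of low degrees: one must note that torsion in $H_i(\K;\ZZ)$ for $i\ge 2$ equals torsion in $H_i$ of the $(i+1)$-skeleton, which is an $(i+1)$-dimensional complex, so Theorem \ref{thm:hkp_torsion_bound} applies with $d=i+1\ge 3\ge 2$; and that $H_0,H_1$ are torsion-free so contribute nothing. Everything else is the elementary monotonicity of $\binom{k}{d}$ in $d$ and of $\sqrt{d+1}$ in $d$, together with the observation that a prime dividing a finite abelian group is bounded by that group's order. No serious obstacle arises; the corollary is essentially a bookkeeping consequence of the cited lemma.
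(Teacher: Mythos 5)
Your approach is essentially the same as the paper's: identify $\Tors H_{d-1}(\K;\ZZ)$ with $\Tors H_{d-1}(\sk_d\K;\ZZ)$, apply Theorem~\ref{thm:hkp_torsion_bound} to the $d$-dimensional complex $\sk_d\K$, and then use the monotonicities $\sqrt{d+1}\le\sqrt{k+1}$ and $\binom{k}{d}\le\binom{k}{\lfloor k/2\rfloor}$. However, your closing remark that ``$H_0,H_1$ are torsion-free so contribute nothing'' is false for $H_1$: the first homology of a simplicial complex can have torsion (any triangulation of $\RR P^2$ has $H_1\cong\ZZ/2$), and the restriction to $i\ge 2$ in your last paragraph would, taken literally, leave the $H_1$ case unbounded. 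This does not ultimately derail the proof because your earlier stated range $2\le d\le k-1$ already covers $d=2$, where Theorem~\ref{thm:hkp_torsion_bound} applies to the $2$-dimensional complex $\sk_2\K$ and bounds $\Tors H_1$ --- which is precisely what the paper does. You should drop the erroneous assertion about $H_1$ and treat $i=1$ (equivalently $d=2$) uniformly with the other degrees; the only degree that genuinely contributes no torsion is $H_0$.
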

\begin{proof}
    Without loss of generality, we can assume $\K$ is a simplicial complex on $k$ vertices. Let $p\in\tau(\K)$. Then $H_{d-1}(\K;\ZZ)$ contains $p$-torsion for some $d\in\{2,\dots,k\}.$ Note that there is an isomorphism $H_{d-1}(\K;\ZZ)=H_{d-1}(\sk_d\K;\ZZ),$ where $\sk_d\K$ is the $d$-skeleton; hence $$p\leq |\Tors H_{d-1}(\sk_d\K;\ZZ)|\leq\sqrt{d+1}^{\binom{k}{d}}$$ by Theorem \ref{thm:hkp_torsion_bound}. Finally, there are inequalities $\sqrt{d+1}\leq \sqrt{k+1},$ $\binom{k}{d}\leq\binom{k}{\lfloor k/2\rfloor}$, and the claimed upper bound follows.
\end{proof}

Before focusing on the torsion in $H_*(\Omega \ZK)$, we can use Corollary \ref{c:simp_torsion_bound} to show a bound on the $p$-torsion which can appear in the homology of moment-angle complexes before looping.
\begin{proposition}
Let $\K$ be a simplicial complex on $m$ vertices. Then
$$\tau(\ZK)\subseteq\{p:p\leq \sqrt{m+1}^{\binom{m}{\lfloor m/2\rfloor}}\}\subseteq\{p:p\leq m^{2^m}\}.$$
\end{proposition}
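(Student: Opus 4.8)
The plan is to combine the well-known Hochster-type formula for the homology of $\ZK$ with the torsion bound of Corollary~\ref{c:simp_torsion_bound}. Recall that there is an isomorphism of graded abelian groups $\widetilde{H}_{k}(\ZK;\ZZ)\cong\bigoplus_{J\subseteq[m]}\widetilde{H}_{k-|J|-1}(\K_J;\ZZ)$ (see \cite[Theorem 4.5.8]{BP15}). Hence any $p$-torsion in $H_*(\ZK;\ZZ)$ comes from $p$-torsion in $\widetilde{H}_*(\K_J;\ZZ)$ for some full subcomplex $\K_J$. Since each $\K_J$ is a simplicial complex on at most $m$ vertices, Corollary~\ref{c:simp_torsion_bound} gives $\max\tau(\K_J)\leq\sqrt{m+1}^{\binom{m}{\lfloor m/2\rfloor}}$, and therefore $\tau(\ZK)\subseteq\bigcup_{J\subseteq[m]}\tau(\K_J)\subseteq\{p:p\leq\sqrt{m+1}^{\binom{m}{\lfloor m/2\rfloor}}\}$, which is the first inclusion.

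For the second inclusion it remains to check the elementary numerical inequality $\sqrt{m+1}^{\binom{m}{\lfloor m/2\rfloor}}\leq m^{2^m}$. First I would bound $\binom{m}{\lfloor m/2\rfloor}\leq 2^m$, so $\sqrt{m+1}^{\binom{m}{\lfloor m/2\rfloor}}\leq(m+1)^{2^{m-1}}$; then it suffices to see $(m+1)^{2^{m-1}}\leq m^{2^m}=\big(m^{2^{m-1}}\big)^2$, i.e. $(m+1)^{2^{m-1}}\leq m^{2^m}$, which follows from $m+1\leq m^2\leq m^{2^m}$ for $m\geq 2$ after taking the common exponent $2^{m-1}\geq 1$ into account (and the small cases $m=0,1$ are handled separately, where $\ZK$ has no torsion). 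The inequalities are routine, so this paragraph is just bookkeeping.

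I do not expect any real obstacle here; the only point requiring a little care is invoking the correct form of the stable decomposition of $\ZK$ (the Bahri–Bendersky–Cohen–Gitler splitting, or equivalently Hochster's formula for the torsion part of $\Tor^{\k[\K]}$), making sure it is a decomposition of integral homology groups and not merely of Betti numbers, so that torsion primes are genuinely controlled by the full subcomplexes. After that, the argument is a one-line reduction to Corollary~\ref{c:simp_torsion_bound} followed by the crude numerical estimate above.

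\begin{proof}
By the stable decomposition of the moment-angle complex \cite[Theorem 4.5.8]{BP15}, there is an isomorphism of graded abelian groups
$$\widetilde{H}_{k}(\ZK;\ZZ)\cong\bigoplus_{J\subseteq[m]}\widetilde{H}_{k-|J|-1}(\K_J;\ZZ).$$
Consequently $\tau(\ZK)\subseteq\bigcup_{J\subseteq[m]}\tau(\K_J)$. Each full subcomplex $\K_J$ has at most $m$ vertices, so Corollary~\ref{c:simp_torsion_bound} gives $\max\tau(\K_J)\leq\sqrt{m+1}^{\binom{m}{\lfloor m/2\rfloor}}$ for every $J$. This proves the first inclusion
$$\tau(\ZK)\subseteq\{p:p\leq\sqrt{m+1}^{\binom{m}{\lfloor m/2\rfloor}}\}.$$
For the second inclusion we may assume $m\geq 2$, since for $m\leq 1$ the complex $\ZK$ is $(D^2)^m$ and has no torsion. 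Using $\binom{m}{\lfloor m/2\rfloor}\leq 2^m$, we get
$$\sqrt{m+1}^{\binom{m}{\lfloor m/2\rfloor}}\leq(m+1)^{2^{m-1}}.$$
Since $m+1\leq m^2$ for $m\geq 2$, raising to the power $2^{m-1}$ yields $(m+1)^{2^{m-1}}\leq m^{2^m}$. Combining these inequalities gives $\sqrt{m+1}^{\binom{m}{\lfloor m/2\rfloor}}\leq m^{2^m}$, which proves the second inclusion.
\end{proof}
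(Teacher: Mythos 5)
Your proof is correct and takes essentially the same route as the paper: both reduce to Corollary~\ref{c:simp_torsion_bound} via the (Baskakov--)Buchstaber--Panov--Hochster decomposition of $H_*(\ZK;\ZZ)$ into full-subcomplex homology, followed by elementary numerical estimates. The only cosmetic difference is in the bookkeeping for the second inclusion (the paper uses $\sqrt{k+1}\leq k$ directly, while you use $m+1\leq m^2$), but both are routine and valid.
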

\begin{proof}
Observe that $\binom{m}{i}<2^m$ and $\sqrt{k+1}\leq k$. The result then follows from Corollary \ref{c:simp_torsion_bound} and the Baskakov--Buchstaber--Panov--Hochster formula $H^*(\ZK;\ZZ)\cong\bigoplus_{J\subseteq[m]}\H^{*-|J|-1}(\K_J;\ZZ)$ \cite[Theorem 4.5.8]{BP15}.
\end{proof}

Since the Backelin-Berglund polynomials can be written in terms of the homology of a simplicial complex, we can also use Corollary \ref{c:simp_torsion_bound} to show a bound on the $p$-torsion which can appear in $H_*(\Omega \ZK)$. We first show that the rational Backelin-Berglund polynomials are equal to the mod-$p$ Backelin-Berglund polynomials if $p$ is a sufficiently large prime.

\begin{proposition}
\label{prp:bb polymonials stabilise}
Let $\K$ be a simplicial complex on $m$ vertices. Then $bb_{\K,\QQ}(z)=bb_{\K,\FF_p}(z)$ for any prime $p>f(m)$, where
$$f(m):=\sqrt{k+1}^{\binom{k}{\lfloor k/2\rfloor}},\quad k=\binom{m}{\lfloor m/2\rfloor}.$$ Moreover, $f(m)<2^{m2^{2^m}}$.

\end{proposition}
\begin{proof}
By Berglund's formula (Theorem \ref{thm:berglund_formula}), $bb_{\K,\k}(z)$ is determined by Poincar\'e series of auxiliary simplicial complexes $\Delta'_S$ on vertex sets $S$, $S\subseteq\MF(\K)$. Since $|\MF(\K)|\leq\binom{m}{\lfloor m/2\rfloor}=k$ by Sperner's theorem, $\max\tau(\Delta'_S)\leq f(m)$ by Corollary \ref{c:simp_torsion_bound}. Hence $H_*(\Delta'_S;\ZZ)$ has no $p$-torsion, so $F(H_*(\Delta'_S;\FF_p);t)=F(H_*(\Delta'_S;\QQ);t)$ by the universal coefficient theorem. 

For the second part, $f(m)<2^{m2^{2^m}}$ follows from $\binom{m}{i}<2^m$ and $\sqrt{k+1}\leq k$.
\end{proof}
\begin{theorem}
\label{thm:djk has fpt}
    Let $\K$ be a simplicial complex on $m$ vertices. Then $$\tau(\Omega\DJ(\K))=\tau(\OZK)\subseteq\{p:p\leq f(m)\}\subseteq\{p:p\leq 2^{m2^{2^m}}\}.$$ In particular, $\tau(\OZK)$ is finite, i.e. $\OZK$ and $\Omega\DJ(\K)$ have FPT.
\end{theorem}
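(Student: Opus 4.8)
The plan is to prove the equality $\tau(\Omega\DJ(\K))=\tau(\OZK)$ first, and then bound this common set of primes using the machinery of Backelin--Berglund polynomials together with the combinatorial torsion bounds of Corollary~\ref{c:simp_torsion_bound}. For the equality, I would invoke the split homotopy equivalence $\Omega\DJ(\K)\simeq(S^1)^m\times\OZK$ from Theorem~\ref{thm:uxasplitfib}: since $(S^1)^m$ has torsion-free homology, the Künneth theorem gives $H_*(\Omega\DJ(\K);\ZZ)\cong\bigwedge[u_1,\dots,u_m]\otimes H_*(\OZK;\ZZ)$ as graded abelian groups, so the torsion subgroups in each degree agree up to the free shift, and in particular a prime $p$ appears as torsion on one side if and only if it appears on the other. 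Hence $\tau(\Omega\DJ(\K))=\tau(\OZK)$.

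Next I would show $\tau(\Omega\DJ(\K))\subseteq\{p:p\leq f(m)\}$. The idea is: a prime $p$ lies in $\tau(\Omega\DJ(\K))$ exactly when $H_*(\Omega\DJ(\K);\ZZ)$ has $p$-torsion, and by the universal coefficient theorem this happens iff $\dim_{\FF_p}H_*(\Omega\DJ(\K);\FF_p)>\dim_\QQ H_*(\Omega\DJ(\K);\QQ)$ in some degree, equivalently the Poincaré series $F(H_*(\Omega\DJ(\K);\FF_p);t)$ differs from $F(H_*(\Omega\DJ(\K);\QQ);t)$. By Proposition~\ref{prp:bb polynomials and loop homology of zk}, these Poincaré series are determined (via $\mathrm{P}_{\k[\K]}$, or equivalently via the polynomials $b_{\K_J,\k}$ ranging over all $J\subseteq[m]$) by the collection $\{b_{\K_J,\k}(t):J\subseteq[m]\}$. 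By Proposition~\ref{prp:bb polymonials stabilise}, $b_{\K_J,\QQ}(t)=b_{\K_J,\FF_p}(t)$ for every $J\subseteq[m]$ once $p>f(|J|)$; since $|J|\leq m$ and $f$ is monotone, $p>f(m)$ suffices for all $J$ simultaneously. Therefore, for $p>f(m)$ the rational and mod-$p$ Poincaré series of $\Omega\DJ(\K)$ coincide, forcing $p\notin\tau(\Omega\DJ(\K))$.

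Finally, the chain of inclusions $\{p:p\leq f(m)\}\subseteq\{p:p\leq 2^{m2^{2^m}}\}$ is exactly the second assertion of Proposition~\ref{prp:bb polymonials stabilise}, namely $f(m)<2^{m2^{2^m}}$. Combining the equality of torsion sets with this bound gives the theorem, and finiteness of $\tau(\OZK)$ is immediate since a set of primes bounded above is finite.

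The main obstacle is making precise the step ``equality of all the $b_{\K_J,\k}$ implies equality of the integral Betti numbers of $\Omega\DJ(\K)$ in every degree,'' i.e.\ ensuring that agreement of Poincaré series over $\QQ$ and $\FF_p$ genuinely rules out $p$-torsion. This is where one uses that $H_*(\Omega\DJ(\K);\ZZ)$ is of finite type (so each degree is a finitely generated abelian group) together with the universal coefficient theorem: if there were $p$-torsion in degree $n$, the $\FF_p$-dimension in degree $n$ or $n+1$ would strictly exceed the $\QQ$-dimension, contradicting equality of Poincaré series. One should also note the monotonicity $f(|J|)\le f(m)$ for $|J|\le m$, which is clear from the definition of $f$ as an increasing function of its argument, and the monotonicity of $k\mapsto\sqrt{k+1}^{\binom{k}{\lfloor k/2\rfloor}}$.
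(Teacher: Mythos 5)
Your proposal is correct and follows essentially the same route as the paper's proof: the split equivalence $\Omega\DJ(\K)\simeq(S^1)^m\times\OZK$ gives the equality of torsion sets, Propositions~\ref{prp:bb polynomials and loop homology of zk} and~\ref{prp:bb polymonials stabilise} give agreement of the $\QQ$ and $\FF_p$ Poincar\'e series for $p>f(m)$, and the universal coefficient theorem then rules out $p$-torsion. You are slightly more careful than the paper in spelling out the monotonicity $f(|J|)\le f(m)$ needed to apply Proposition~\ref{prp:bb polymonials stabilise} uniformly over all full subcomplexes $\K_J$, which is a welcome clarification.
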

\begin{proof}
Since $\Omega\DJ(\K)\simeq (S^1)^{\times m}\times\OZK$ by Theorem \ref{thm:uxasplitfib}, $\tau(\OZK)=\tau(\Omega\DJ(\K))$. Let $\k$ be a field and $p>f(m)$ be a prime number. By Proposition \ref{prp:bb polynomials and loop homology of zk}, the Poincar\'e series $F(H_*(\OZK;\k);t)$ is expressed in terms of Backelin--Berglund polynomials $bb_{\K_J,\k}(z)$, $J\subseteq[m]$. Proposition \ref{prp:bb polymonials stabilise} implies $bb_{\K_J,\QQ}(z)=bb_{\K_J,\FF_p}(z)$, so $F(H_*(\OZK;\QQ);t)=F(H_*(\OZK;\FF_p);t)$ for $p>f(m)$. By the universal coefficient theorem, $H_*(\OZK;\ZZ)$ has no $p$-torsion.
\end{proof}

It was shown in \cite[After Theorem 6.4]{St25} that $H_*(\Omega \ZK)$ can contain arbitrarily large torsion in homology. Let $t(m)$ be the smallest integer such that, for any simplicial complex $\K$ on $\leq m$ vertices, there is no $p$-torsion in $H_*(\OZK;\ZZ)$ for $p>t(m)$. Equivalently, $t(m)=\max\bigcup_\K \tau(\OZK)$ where the union is over all  simplicial complexes $\K$ on $k\leq m$ vertices. Theorem \ref{thm:djk has fpt} implies $\log_2 t(m)\leq \log_2 f(m)\leq m\cdot 2^{2^m}$. We provide a lower bound, showing that $\log_2 t(m)$ grows faster than any polynomial.

\begin{theorem}[see {\cite[Theorem 1]{newman}}]
\label{thm:New18}
Let $d\geq 2$ be an integer.
Then there exists a constant $C_d>0$ with the following property: for any finite abelian group $G$, there is a $d$-dimensional simplicial complex $X$ on $k\leq C_d\cdot\log(|G|)^{1/d}$ vertices such that the torsion subgroup of $H_{d-1}(X;\ZZ)$ is isomorphic to $G$.\qed
\end{theorem}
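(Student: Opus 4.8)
We would import this verbatim from \cite[Theorem~1]{newman} as a black box; what follows is a sketch of the strategy one would use to (re)prove it. Fix $d\geq 2$ and a vertex count $k$ to be optimized. The starting point is to work inside the complete $(d-1)$-skeleton of the simplex on $[k]$: its lattice of simplicial $(d-1)$-cycles $Z_{d-1}$ is free of rank $N=\binom{k-1}{d}$, and the boundaries $\partial\sigma$ of the $\binom{k}{d+1}$ available $d$-faces $\sigma$ already generate $Z_{d-1}$. A set $F$ of $d$-faces then defines a $d$-complex $X$ with complete $(d-1)$-skeleton, so that $\widetilde H_i(X;\ZZ)=0$ for $i<d-1$, $\widetilde H_d(X;\ZZ)$ is free, and $\widetilde H_{d-1}(X;\ZZ)=Z_{d-1}/\langle\partial\sigma:\sigma\in F\rangle$. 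Hence $\Tors\widetilde H_{d-1}(X;\ZZ)$ is the torsion of $\Coker\partial_F$, where $\partial_F$ is the integer matrix whose columns are the $(d{+}1)$-sparse $\pm1$-vectors $\partial\sigma$, $\sigma\in F$. The problem becomes purely lattice-theoretic: choose such columns so that $\Coker\partial_F$ has torsion part isomorphic to $G$ and controllable free part, using $N=\Theta(k^d)$ cycles and $k=O((\log|G|)^{1/d})$ vertices.

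The plan is then threefold. First, reduce $G\cong\bigoplus_i\ZZ/q_i$ to prime powers $q_i$, noting $\sum_i\log q_i=\log|G|$. Second, build for each target $q=p^a$ a ``torsion gadget'': a prescribed block of $d$-faces whose boundary submatrix has Smith normal form $(1,\dots,1,q)$ up to free summands. Since the maximal order of $\Tors\widetilde H_{d-1}$ of a $d$-complex on $k$ vertices is $2^{\Theta_d(k^d)}$ (the upper bound is Theorem~\ref{thm:hkp_torsion_bound}, and it is known to be achievable), such gadgets must be essentially extremal, so a geometric realization of a Moore space (which would cost $\Theta(q)$ vertices) is far too wasteful; instead I would build them recursively, telescoping boundary relations so that the determinant of a square submatrix multiplies up in Sylvester--Hadamard fashion, achieving $\log q=\Theta((\#\,\mathrm{vertices})^d)$. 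Third, assemble the gadgets.

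The assembly step is the main obstacle. A wedge of the complexes for the individual $\ZZ/q_i$ does \emph{not} work: since $x\mapsto x^{1/d}$ is subadditive for $d\geq 2$, the vertex count $\sum_i C_d(\log q_i)^{1/d}$ of the wedge can be far larger than $C_d(\log|G|)^{1/d}$. One must instead let all gadgets share a single vertex pool of size $k=O((\log|G|)^{1/d})$, so that the ambient cycle lattice has rank $N=\Theta(k^d)=\Theta(\log|G|)$, comfortably exceeding the number $\leq\log_2|G|$ of invariant factors of $G$; then there is room to lay the gadgets on disjoint blocks of $d$-faces whose boundary images meet $Z_{d-1}$ in suitably nested (or complementary) sublattices, so that $\Coker\partial_F$ has torsion $\bigoplus_i\ZZ/q_i$, after which one adjoins a further set of $d$-faces to kill the residual free summand without creating new torsion (one more dimension count). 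The real difficulty is that \emph{both} the per-factor gadget \emph{and} the global packing must operate near the Hadamard bound: essentially no vertex can be wasted, which is what forces the delicate recursive/algebraic construction rather than any naive one, and is the technical heart of Newman's argument.
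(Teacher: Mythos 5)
The paper provides no proof of Theorem~\ref{thm:New18}: it is stated with a terminal \(\qedsymbol\) and attributed to \cite[Theorem~1]{newman}, i.e.\ imported as a black box, exactly as in your opening sentence. Your ensuing sketch of Newman's construction is unverifiable against this paper (which contains no argument and none is needed), but on the handling of the statement itself your approach coincides with the paper's.
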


A simplicial complex $\K$ is called $k$-neighbourly, where $k\geq 0$ is an integer, if each set of $k+1$ vertices spans a simplex.

\begin{proposition}
Let $d\geq 2$ be an integer. There is a constant $\varepsilon_d>0$ with the following property: if $m\geq 1$ and $p\leq 2^{\varepsilon_d m^d}$ is a prime number, then there is a $(d-1)$-neighbourly simplicial complex $\K$ on $m'\leq m$ vertices such that the groups $H_{d-1}(\K;\ZZ)$, $H_{d+m'}(\ZK;\ZZ)$ and $H_{k(d+m'-1)}(\OZK;\ZZ)$ for $k\geq 1$ contain $p$-torsion. Therefore, $\log_2 t(m)\geq \varepsilon_d m^d$.
\end{proposition}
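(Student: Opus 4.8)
The plan is to build $\K$ from a small triangulation carrying prescribed homological torsion, and then transport that torsion all the way to the loop space using a suspension splitting coming from Golodness. First I would apply Theorem~\ref{thm:New18} with $G=\ZZ/p$, obtaining a $d$-dimensional complex $X_0$ on $k_0\le C_d\cdot(\log p)^{1/d}$ vertices with $\Tors H_{d-1}(X_0;\ZZ)\cong\ZZ/p$. Let $[m']$ be its vertex set ($m'=k_0$), let $\Delta$ be the full simplex on $[m']$, and set $\K:=X_0\cup\sk_{d-1}\Delta$; this is $(d-1)$-neighbourly of dimension $d$, and for a suitable $\varepsilon_d$ depending only on $d$ the bound $p\le 2^{\varepsilon_d m^d}$ forces $m'\le m$. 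Since the CW quotient $\K/X_0$ has cells only in dimensions $\le d-1$, we get $H_j(\K,X_0;\ZZ)=0$ for $j\ge d$, so the long exact sequence of the pair yields an injection $H_{d-1}(X_0;\ZZ)\hookrightarrow H_{d-1}(\K;\ZZ)$. Hence $\Tors H_{d-1}(\K;\ZZ)$ has an element of order $p$, so it — being a direct summand of $H_{d-1}(\K;\ZZ)$ — and therefore $H_{d-1}(\K;\ZZ)$ itself, contains $\ZZ/p^{r'}$ as a direct summand for some $r'\ge1$. The homology form of the Baskakov--Buchstaber--Panov--Hochster formula $\widetilde H_\ell(\ZK;\ZZ)\cong\bigoplus_{J\subseteq[m']}\widetilde H_{\ell-|J|-1}(\K_J;\ZZ)$, applied with $J=[m']$ and $\ell=d+m'$, then shows $\widetilde H_{d+m'}(\ZK;\ZZ)$ has a $\ZZ/p^{r'}$ direct summand, in particular $p$-torsion.

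Next I would show that $\ZK$ is a suspension. Every full subcomplex $\K_J=(X_0)_J\cup\sk_{d-1}\Delta_J$ is $(d-2)$-connected and at most $d$-dimensional, so $\widetilde H^i(\K_J;\ZZ)=0$ unless $i\in\{d-1,d\}$. In the Tor-algebra $\widetilde H^*(\ZK;\ZZ)\cong\bigoplus_J\widetilde H^{*-|J|-1}(\K_J;\ZZ)$, a product or $n$-fold Massey product ($n\ge2$) of positive-degree classes supported on $J_1,\dots,J_n$ is represented by a class in $\widetilde H^{e'}(\K_{J'};\ZZ)$ for some $J'\subseteq J_1\cup\dots\cup J_n$, and a degree count gives $e'\ge\sum_i e_i+1\ge n(d-1)+1\ge 2d-1>d\ge\dim\K_{J'}$, so it vanishes. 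Thus $\K$ is Golod over $\ZZ$, and by the theorem of Iriye--Kishimoto that Golodness of $\K$ implies $\ZK$ is homotopy equivalent to a suspension, $\ZK\simeq\Sigma Z$ with $\widetilde H_*(Z;\ZZ)\cong\widetilde H_{*+1}(\ZK;\ZZ)$; in particular $\widetilde H_{d+m'-1}(Z;\ZZ)$ has a $\ZZ/p^{r'}$ direct summand.

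The James splitting $\Sigma\Omega\Sigma Z\simeq\bigvee_{k\ge1}\Sigma Z^{\wedge k}$ now gives $\widetilde H_N(\Omega\ZK;\ZZ)\cong\bigoplus_{k\ge1}\widetilde H_N(Z^{\wedge k};\ZZ)$. Writing $n_0=d+m'-1$ and iterating the split Künneth theorem, $(\widetilde H_{n_0}(Z;\ZZ))^{\otimes k}$ is a direct summand of $\widetilde H_{kn_0}(Z^{\wedge k};\ZZ)$, and it contains $(\ZZ/p^{r'})^{\otimes k}\cong\ZZ/p^{r'}$ as a summand; hence $\widetilde H_{k(d+m'-1)}(\Omega\ZK;\ZZ)$ has $p$-torsion for every $k\ge1$. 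This yields all three assertions for our $\K$; applying the construction to a prime $p$ just below $2^{\varepsilon_d m^d}$ (Bertrand's postulate) gives $t(m)\ge p$, hence $\log_2 t(m)\ge\varepsilon_d m^d$ after shrinking $\varepsilon_d$ slightly to absorb the loss and the finitely many small values of $m$.

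The main obstacle is the step asserting that $\ZK$ is a suspension: the Golodness of $\K$ is a bare degree count, but upgrading it to an honest integral suspension splitting of $\ZK$ relies on the Iriye--Kishimoto theorem. If that is not available in the exact form needed, Golod's classical theorem still shows that $H_*(\Omega\ZK;\k)$ is a free associative algebra over every field $\k$; comparing Poincaré series for $\k=\QQ$ and $\k=\FF_p$ — using that $\widetilde H_*(\ZK;\FF_p)$ exceeds $\widetilde H_*(\ZK;\QQ)$ in dimension in degree $d+m'$ — forces $\dim_{\FF_p}H_{k(d+m'-1)}(\Omega\ZK;\FF_p)>\dim_\QQ H_{k(d+m'-1)}(\Omega\ZK;\QQ)$ for all $k$, giving $p$-torsion in $H_{k(d+m'-1)}(\Omega\ZK;\ZZ)$ or in the adjacent degree; the suspension argument is exactly what is needed to pin the torsion to the precise degree $k(d+m'-1)$.
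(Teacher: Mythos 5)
Your construction coincides with the paper's: apply Newman's theorem (Theorem~\ref{thm:New18}) with $G=\ZZ/p$ to obtain a small $d$-dimensional complex $X_0$ with prescribed torsion, set $\K:=X_0\cup\sk_{d-1}\Delta$, note that $H_{d-1}(X_0)\hookrightarrow H_{d-1}(\K)$ by the long exact sequence of the pair, and then push the torsion up to $\Omega\ZK$ through a suspension splitting of $\ZK$ combined with the James splitting. The one substantive error is the sentence invoking ``the theorem of Iriye--Kishimoto that Golodness of $\K$ implies $\ZK$ is homotopy equivalent to a suspension.'' That is not an Iriye--Kishimoto theorem, and Golodness alone is not known to imply that $\ZK$ is a suspension. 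What Iriye--Kishimoto proved (\cite[Theorem~10.9 + Theorem~1.3]{fat_wedge}, the reference the paper cites) is that if $\K$ is $(d-1)$-neighbourly and $d$-dimensional (more generally, if the fat wedge filtration of $\RK$ is trivial), then $\ZK\simeq\bigvee_{J\subseteq[m']}\Sigma^{|J|+1}|\K_J|$. Your $\K$ satisfies the neighbourliness hypothesis by construction, so the correct theorem does apply; your degree count showing Golodness is true but is a weaker consequence of the same hypothesis and does not by itself deliver the splitting. With the correct citation, the rest of your argument is a harmless variant of the paper's: the paper extracts the $J=[m']$ wedge summand $\Sigma^{1+m'}|\K|$ as a retract of $\ZK$ and applies James; you instead use the coarser statement $\ZK\simeq\Sigma Z$ together with the K\"unneth theorem on $Z^{\wedge k}$, which reaches the same conclusion in the same degree $k(d+m'-1)$. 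Your fallback through Poincar\'e series of free algebras is, as you yourself observe, both unnecessary once the right theorem is cited and too weak to locate the torsion in the exact degree claimed.
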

\begin{proof}
    We apply Theorem ~\ref{thm:New18} for $G=\ZZ/p\ZZ$. Let $\varepsilon_d:=C_d^{-d}$. We obtain a $d$-dimensional simplicial complex $X$ on $$m'\leq C_d\cdot\log(p)^{1/d}\leq C_d\cdot \varepsilon_d^{1/d}m=m$$ vertices such that the torsion subgroup of $H_{d-1}(X;\ZZ)$ is isomorphic to $\ZZ/p$. Let $\K:=X\cup\sk_{d-1}\Delta$ be $X$ with all $\leq(d-1)$-dimensional simplices added. The quotient $|\K|/|X|$ is a $(d-1)$-dimensional $CW$-complex, so $H_d(\K,X)=0$. It follows that $H_{d-1}(X)\to H_{d-1}(\K)$ is injective, so $\ZZ/p$ is a subgroup of $H_{d-1}(\K;\ZZ)$.

    Since $\K$ is a $(d-1)$-neighbourly $d$-dimensional complex, by \cite[Theorem 10.9 + Theorem 1.3]{fat_wedge}, there is a homotopy equivalence $\ZK\simeq\bigvee_{J\subseteq[m']}\Sigma^{|J|+1}|\K_J|$. Therefore, $\Sigma^{1+m'}|\K|$ retracts off $\ZK$, and so $\Omega\Sigma^{1+m'}|\K|$ retracts off $\OZK$. Hence, by the James splitting \cite{Ja55}, $H_{k(d+m'-1)}(\OZK;\ZZ)$ has $p$-torsion.
\end{proof}

\subsection{\texorpdfstring{$P$}{P}-local decompositions of looped Davis--Januszkiewicz spaces}
We now show that Proposition \ref{prp:bounded generation imply anick} applies to $\DJ(\K)$, using the following result of Backelin and Roos on ``stabilisation'' for monomial rings. We state the result only in the case of Stanley--Reisner rings.
\begin{theorem}[{\cite[Theorem 5']{backelin_roos}}]
\label{thm:backelin_roos}
Let $\k$ be a field and $\K$ be a simplicial complex on $[m]$. Consider the double $\Ext$-algebra $A:=\Ext_{\Ext_{\k[\K]}(\k,\k)}(\k,\k)$
(with the $\ZZ_{\geq 0}\times\ZZ_{\geq 0}\times 2\ZZ_{\geq 0}^m$-grading), and let $V_i\in A^{1,1,2e_i}$ be the elements which naturally correspond to the generators $v_i\in\k[\K]$, $i=1,\dots,m$. Then the multiplication by $V_i$ gives isomorphisms
$$A^{s,t,2\beta}\to A^{s+1,t+1,2(\beta+e_i)}$$
whenever $s,t\in\ZZ,$ $\beta\in\ZZ^m$ and $\beta_i\geq 1$.\qed
\end{theorem}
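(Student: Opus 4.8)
The plan is to pass through the loop-space interpretation of $\Ext_{\k[\K]}(\k,\k)$ and thereby reduce the statement to a transparent property of the moment-angle complex $\ZK$. By Theorem~\ref{thm:loophomologydjk} there is an isomorphism of graded algebras $E:=\Ext_{\k[\K]}(\k,\k)\cong H_*(\Omega\DJ(\K);\k)$, under which $u_i\in\Ext^1_{\k[\K]}(\k,\k)_{2e_i}$ corresponds to the fundamental class of the $i$-th circle factor. Since the splitting of Theorem~\ref{thm:uxasplitfib} respects the Pontryagin products (it is induced by an $H$-map $\Omega\DJ(\K)\simeq(S^1)^m\times\OZK$), tracking the $\Zm$-multigrading gives an isomorphism of $\ZZ\times\Zm$-graded algebras
$$E\;\cong\;\Lambda[u_1,\dots,u_m]\otimes H_*(\OZK;\k),$$
where $\Lambda[u_i]$ denotes the exterior algebra on one generator $u_i$ of internal multidegree $2e_i$ (so $u_i^2=0$).

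Next I would compute $A=\Ext_E(\k,\k)$ by the Künneth formula for Ext over the field $\k$: tensoring minimal free resolutions shows that for augmented graded algebras of finite type $\Ext_{E_1\otimes E_2}(\k,\k)\cong\Ext_{E_1}(\k,\k)\otimes\Ext_{E_2}(\k,\k)$, compatibly with all gradings and with the Yoneda products. The minimal free resolution of $\k$ over $\Lambda[u_i]$ is $\cdots\xrightarrow{\,\cdot u_i\,}\Lambda[u_i]\xrightarrow{\,\cdot u_i\,}\Lambda[u_i]\to\k$, so $\Ext_{\Lambda[u_i]}(\k,\k)=\k[V_i]$ is polynomial on the class $V_i\in A^{1,1,2e_i}$ dual to $u_i$ — which is exactly the element named in the statement, since $u_i$ is the class corresponding to $v_i\in\k[\K]$. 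Hence
$$A\;\cong\;\k[V_1,\dots,V_m]\otimes B,\qquad B:=\Ext_{H_*(\OZK;\k)}(\k,\k),$$
as multigraded algebras, with each $V_i$ acting by multiplication on the polynomial tensor-factor.

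With this description the theorem reduces to the single claim that the $\Zm$-multigrading of $B$ is \emph{squarefree}: $B_{2\delta}=0$ unless $\delta_i\in\{0,1\}$ for every $i$. Granting this, multiplication by $V_i$ on $\k[V_1,\dots,V_m]\otimes B$ is injective for every multidegree (it is injective on the polynomial factor and $\k$ is a field), and its cokernel is $\k[V_1,\dots,\widehat{V_i},\dots,V_m]\otimes B$, whose component in internal multidegree $2\gamma$ is $\bigoplus_{\delta\leq\gamma,\ \delta_i=\gamma_i}\k[V_1,\dots,\widehat{V_i},\dots,V_m]_{\gamma-\delta}\otimes B_{2\delta}$ and hence vanishes once $\gamma_i\geq 2$. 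Therefore $V_i\cdot\colon A^{s,t,2\beta}\to A^{s+1,t+1,2(\beta+e_i)}$ is always injective and is surjective whenever $(\beta+e_i)_i=\beta_i+1\geq 2$, i.e. whenever $\beta_i\geq 1$; this is precisely the asserted isomorphism.

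The main obstacle is thus to prove that $B=\Ext_{H_*(\OZK;\k)}(\k,\k)$ is concentrated in squarefree multidegrees. Two approaches suggest themselves. When $\OZK$ is formal over $\k$ (equivalently $\ZK$ is $\k$-coformal) — which holds, for instance, when $\K$ is flag — the Rothenberg--Steenrod/Eilenberg--Moore spectral sequence collapses and identifies $B$ with $H^*(\ZK;\k)\cong\bigoplus_{J\subseteq[m]}\H^{\,*-|J|-1}(\K_J;\k)$, which is visibly squarefree by the Hochster-type decomposition; the difficulty is that $\ZK$ need not be coformal over an arbitrary field. The approach actually due to Backelin and Roos is purely algebraic: one analyzes the multigraded bar construction $\overline{\mathrm{B}}(H_*(\OZK;\k))$ directly and shows that every cycle of non-squarefree internal multidegree is a boundary — the point being that elements of $H_*(\OZK;\k)$ in non-squarefree multidegrees are decomposable in a way visible to the bar differential, so that $\Tor^{H_*(\OZK;\k)}(\k,\k)$, and hence its graded dual $B$, carries no non-squarefree multidegree. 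This is the step where the combinatorics of $\K$ genuinely enters, and it is the technical heart of the proof.
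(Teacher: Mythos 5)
This theorem is cited in the paper directly from Backelin--Roos (\cite[Theorem 5']{backelin_roos}); the paper gives no proof of its own, so there is no internal argument to compare against. Assessing your attempt on its own terms, it contains a genuine gap at the very first structural step.

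You assert an isomorphism of graded \emph{algebras}
$$E:=\Ext_{\k[\K]}(\k,\k)\;\cong\;\Lambda[u_1,\dots,u_m]\otimes H_*(\OZK;\k),$$
and then compute $A=\Ext_E(\k,\k)$ by a K\"unneth formula. But the splitting $\Omega\DJ(\K)\simeq(S^1)^m\times\OZK$ is a homotopy equivalence of spaces, not a homotopy equivalence of $H$-spaces, and it induces only an isomorphism of $\Lambda[u_1,\dots,u_m]$-modules (this is the content of \eqref{eq:poincare series for odj and ozk} and of \cite[Proposition 3.7]{vylegzhanin}), not of algebras. Concretely, take $\K$ to be two disjoint points. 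Then $\DJ(\K)=\CC P^\infty\vee\CC P^\infty$, $\ZK=S^3$, and $H_*(\Omega\DJ(\K);\k)$ is the free product $\Lambda[u_1]\ast\Lambda[u_2]$, in which $u_1u_2$ and $u_2u_1$ are linearly independent. In $\Lambda[u_1,u_2]\otimes\k[x]$ one has $u_1u_2=-u_2u_1$, so the two algebras are not isomorphic even though their (multigraded) Poincar\'e series agree. More generally, for a flag complex $\K$, $\Ext_{\k[\K]}(\k,\k)$ is the exterior graph product $T(u_1,\dots,u_m)/(u_i^2,\,u_iu_j+u_ju_i\text{ for }\{i,j\}\in\K)$, which is not a tensor product of the full exterior algebra with anything unless $\K$ is the full simplex boundary. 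Since the K\"unneth step fails, the identification $A\cong\k[V_1,\dots,V_m]\otimes B$ and the subsequent reduction to ``$B$ is squarefree'' do not go through.

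Even setting this aside, your final paragraph concedes that the ``technical heart'' — the squarefree vanishing claim — must be extracted from Backelin--Roos themselves, so the proposal is circular as an independent proof of their theorem. If you want to actually prove Theorem~\ref{thm:backelin_roos} you would need to engage with the multigraded bar/cobar constructions for $\Ext_{\k[\K]}(\k,\k)$ directly, keeping track of the noncommutativity among the $u_i$ rather than collapsing them into a commuting exterior factor.
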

\begin{corollary}
\label{cor:toralgindexing}
Let $\k$ be a field, $n\geq 0$, $\alpha\in\Zm$, and suppose that $\Tor^{H_*(\Omega\DJ(\K);\k)}_{1,-n,2\alpha}(\k,\k)\neq 0$. Then $\alpha_i\leq 1$ for all $i=1,\dots,m$.
\end{corollary}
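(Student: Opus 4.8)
The plan is to translate the statement into a question about the double $\Ext$-algebra and directly apply Backelin--Roos (Theorem~\ref{thm:backelin_roos}). By Theorem~\ref{thm:loophomologydjk}, $H_*(\Omega\DJ(\K);\k)\cong\Ext_{\k[\K]}(\k,\k)$ as graded algebras, and the $\ZZ\times\Zm$-grading on the left corresponds to the $(i,2\alpha)$-grading on $\Ext^i_{\k[\K]}(\k,\k)_{2\alpha}$. Hence $\Tor^{H_*(\Omega\DJ(\K);\k)}_{s,-n,2\alpha}(\k,\k)$ agrees, up to the standard homological-degree bookkeeping, with a graded component of the double $\Ext$-algebra $A=\Ext_{\Ext_{\k[\K]}(\k,\k)}(\k,\k)$. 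More precisely, I would first unravel how the internal $\ZZ$-degree $-n$ on $H_*(\Omega\DJ(\K);\k)$ induces, together with the $\Tor$-degree $s$, the trigrading $(s,t,2\beta)$ on $A$ used in Theorem~\ref{thm:backelin_roos}: the element $v_i\in\k[\K]$ has degree $2e_i$, so the corresponding class in $H_1(\Omega\DJ(\K);\k)$ sits in internal degree $-1+2$, and the generator $V_i\in A^{1,1,2e_i}$ is detected by the $\Tor_1$ term in multidegree $2e_i$. So the $s=1$ part of $A$ in multidegree $2\alpha$ is exactly $\Tor^{H_*(\Omega\DJ(\K);\k)}_{1,-n,2\alpha}(\k,\k)$ for the appropriate $n$ determined by $\alpha$.

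Next I would argue by contradiction: suppose $\alpha_i\geq 2$ for some $i$. By Theorem~\ref{thm:backelin_roos}, multiplication by $V_i$ gives an isomorphism $A^{s,t,2\beta}\to A^{s+1,t+1,2(\beta+e_i)}$ whenever $\beta_i\geq 1$. Applying this with $s=0$ (and $\beta=\alpha-e_i$, which still has $(\alpha-e_i)_i\geq 1$), we get $A^{0,t,2(\alpha-e_i)}\cong A^{1,t+1,2\alpha}$. But $A^{0,\ast,\ast}$ is concentrated in the trivial multidegree: $A^{0,0,0}=\k$ and $A^{0,t,2\gamma}=0$ for $(t,\gamma)\neq(0,0)$, since $A^0=\Ext^0_{\Ext_{\k[\K]}(\k,\k)}(\k,\k)=\k$ is just the ground field sitting in degree zero. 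Since $\alpha-e_i\neq 0$ (its $i$-th coordinate is $\geq 1$), the source $A^{0,t,2(\alpha-e_i)}$ vanishes, hence so does $A^{1,t+1,2\alpha}$ for every $t$, which forces $\Tor^{H_*(\Omega\DJ(\K);\k)}_{1,-n,2\alpha}(\k,\k)=0$, contradicting the hypothesis.

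The main obstacle I anticipate is purely bookkeeping rather than conceptual: making the identification between the $\Tor$-grading used in the statement and the $(s,t,2\beta)$-trigrading of Theorem~\ref{thm:backelin_roos} completely precise, in particular confirming that $\Tor_1$ of a connected graded algebra over the ground field is canonically the space of indecomposables, that the element pairing with $v_i$ lands in the claimed multidegree, and that the internal degree $n$ is a function of $\alpha$ alone (so that the single index $2\alpha$ really pins down the relevant component). Once the dictionary is set up, the proof is a one-line application of the isomorphism in Theorem~\ref{thm:backelin_roos} together with the observation that $A^0$ is trivial in positive multidegree. I would also double-check the edge case $m=0$ or $\alpha=0$, where the conclusion $\alpha_i\leq 1$ holds vacuously.
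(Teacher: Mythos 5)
Your proposal is correct and takes essentially the same route as the paper: both identify the $\Tor$-group with a component of the double $\Ext$-algebra $A$ by duality, then apply the Backelin--Roos isomorphism $A^{0,\ast,2(\alpha-e_i)}\cong A^{1,\ast,2\alpha}$ (valid since $\alpha_i\geq 2$ forces $(\alpha-e_i)_i\geq 1$), and conclude from the fact that $A^{0,\ast,\ast}$ is concentrated in degree $(0,0,0)$ while $\alpha-e_i\neq 0$. The bookkeeping concerns you flag are handled in the paper exactly as you anticipate, via the duality $\Tor^R_i(\k,\k)\cong(\Ext^i_R(\k,\k))^\ast$ applied to $R=H_*(\Omega\DJ(\K);\k)\cong\Ext_{\k[\K]}(\k,\k)$.
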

\begin{proof}
Since $\Tor^R_i(\k,\k)$ is dual to $\Ext_R^i(\k,\k)$ and $H_{-n,2\alpha}(\Omega\DJ(\K);\k)\cong\Ext_{\k[\K]}^{n,2\alpha}(\k,\k)$ by Theorem \ref{thm:loophomologydjk}, the $\k$-modules $\Tor^{H_*(\Omega\DJ(\K);\k)}_{i,-n,2\alpha}(\k,\k)$ and $A^{i,n,2\alpha}$ are dual. We need to show that $A^{1,n,2\alpha}\neq0$ implies $\alpha_i\leq 1$. Suppose $\alpha_i\geq 2$. By Theorem \ref{thm:backelin_roos}, $A^{0,n-1,2(\alpha-e_i)}\cong A^{1,n,2\alpha}\neq 0$. However, $\Ext_{\k[\K]}(\k,\k)$ is a connected algebra, so $\bigoplus_{n',\alpha'}A^{0,n',2\alpha'}=A^{0,0,0}=\k$. This implies $\alpha=e_i$, so $\alpha_i=1$, a contradiction.
\end{proof}
\begin{proposition}
\label{prp:hodj bound on degrees}
Let $\k$ be a field. The algebra $H_*(\Omega \DJ(\K);\k)$ is multiplicatively generated by elements of degree less than $2m$.
\end{proposition}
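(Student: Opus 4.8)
The plan is to combine the Backelin--Roos stabilisation theorem (Theorem~\ref{thm:backelin_roos}), reformulated as Corollary~\ref{cor:toralgindexing}, with standard homological algebra relating the $\Tor$-functor of an augmented algebra to its minimal generators. Recall that for a connected graded $\k$-algebra $R$, the graded vector space $\Tor^R_1(\k,\k)$ is canonically the space of indecomposables $\overline{R}/\overline{R}^2$, so a set of homogeneous elements multiplicatively generates $R$ if and only if their images span $\Tor^R_1(\k,\k)$. Applying this to $R=H_*(\Omega\DJ(\K);\k)$, it suffices to show that $\Tor^{H_*(\Omega\DJ(\K);\k)}_1(\k,\k)$ is concentrated in degrees less than $2m$.

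First I would use the $\ZZ\times\Zm$-multigrading on $H_*(\Omega\DJ(\K);\k)$ coming from Theorem~\ref{thm:loophomologydjk} (that is, $H_{-n,2\alpha}=\Ext^n_{\k[\K]}(\k,\k)_{2\alpha}$), which induces a corresponding multigrading on $\Tor_1$. Suppose $\Tor^{H_*(\Omega\DJ(\K);\k)}_{1,-n,2\alpha}(\k,\k)\neq 0$ for some $n\geq 0$ and $\alpha\in\Zm$. The total homological degree of the corresponding element of $H_*(\Omega\DJ(\K);\k)$ is $2|\alpha|-n$. By Corollary~\ref{cor:toralgindexing}, $\alpha_i\leq 1$ for every $i$, hence $|\alpha|\leq m$, giving $2|\alpha|-n\leq 2m-n\leq 2m$. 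To get the strict inequality $<2m$, I would observe that equality $2|\alpha|-n=2m$ would force $|\alpha|=m$ (so $\alpha=(1,\dots,1)=[m]$) and $n=0$; but the degree-$0$, multidegree-$0$ part is all that lives in homological degree $0$ for a connected algebra, so there are no nontrivial indecomposables in homological degree $2m$. This confirms that all multiplicative generators of $H_*(\Omega\DJ(\K);\k)$ sit in degrees $<2m$.

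The main obstacle is getting the bookkeeping of the three gradings right: one must track the homological ($s$) grading of the double $\Ext$-algebra $A$, the internal ($t$) grading, and the $\Zm$-grading simultaneously, and correctly identify $\Tor_1$ with $A^{1,*,*}$ via the stated duality $\Tor^R_{i}(\k,\k)^* \cong \Ext^i_R(\k,\k)$ applied to $R=H_*(\Omega\DJ(\K);\k)=\Ext_{\k[\K]}(\k,\k)$. Since Corollary~\ref{cor:toralgindexing} already packages exactly this duality and the conclusion $\alpha_i\leq 1$, the remaining work is the short degree computation above. Everything else—passing from a spanning set of $\Tor_1$ to a multiplicative generating set, and the fact that a connected graded algebra has trivial indecomposables in degree $0$—is standard. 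I would therefore present the proof essentially as: cite Corollary~\ref{cor:toralgindexing}, translate it to the statement about $\Tor_1$ of the loop homology, and do the one-line degree estimate to conclude multiplicative generation in degrees $<2m$.
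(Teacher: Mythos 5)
Your proposal is correct and follows essentially the same route as the paper: identify multiplicative generators with $\Tor_1$ of the loop homology, invoke Corollary~\ref{cor:toralgindexing} (i.e.\ Backelin--Roos) to bound each $\alpha_i\le 1$, and do the degree estimate. The only cosmetic difference is that the paper rules out the boundary case by observing directly that $\Ext^0_{\k[\K]}(\k,\k)_{2\alpha}=0$ for $\alpha\neq 0$, forcing $n\ge 1$ and hence $2|\alpha|-n\le 2m-1$, whereas you isolate and exclude the single equality case $n=0$, $\alpha=[m]$; both are valid and rest on the same fact.
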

\begin{proof}
For a connected $\k$-algebra $A$, any set of additive generators of the $\k$-module $\Tor^A_1(\k,\k)$ gives a set of multiplicative generators for $A$ (see e.g. \cite[Theorem A.6(1)]{vylegzhanin}). By Corollary ~\ref{cor:toralgindexing}, $H_*(\Omega\DJ(\K);\k)$ is generated by elements of bidegree $(-n,2\alpha)$ such that $n\geq 0$ and $\alpha_1,\dots,\alpha_m\leq 1$. Also, $H_{0,2\alpha}(\Omega\DJ(\K);\k)\cong\Hom_{\k[\K]}(\k,\k)^{2\alpha}=0$ if $\alpha\neq 0$, so $n\geq 1$. Hence the total degree of any generator is of the form $-n+2|\alpha|\leq -1+2m$. 
\end{proof}

\begin{theorem}
\label{thm:anick for djk}
Localised away from the set of primes $P=\tau(\OZK)\cup\{p:p< 2m\}$, $\Omega \DJ(\K) \in\prod\mathcal{P}$ and $\Omega \ZK \in \prod\mathcal{P}$. In particular, Anick's conjecture holds for $\DJ(\K)$ and $\ZK$, and this holds after localising away from all primes $p \leq 2^{m \cdot 2^{2^m}}$.
\end{theorem}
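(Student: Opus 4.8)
The plan is to apply Proposition~\ref{prp:bounded generation imply anick} to the space $X=\DJ(\K)$ with $N=2m$ and the set of primes $P=\tau(\OZK)\cup\{p:p<2m\}$, and then to transfer the conclusion to $\ZK$. First I would verify the two hypotheses of Proposition~\ref{prp:bounded generation imply anick}: the space $\DJ(\K)$ is simply connected and of finite type, so the ambient assumptions are met. Condition~(a) holds because $\tau(\Omega\DJ(\K))=\tau(\OZK)$ by Theorem~\ref{thm:djk has fpt}, and $P$ contains every prime $p<2m=N$ by construction. Condition~(b) in fact holds for every prime $p$ simultaneously: applying Proposition~\ref{prp:hodj bound on degrees} with $\k=\FF_p$ shows that the $\FF_p$-algebra $H_*(\Omega\DJ(\K);\FF_p)$ is multiplicatively generated in degrees less than $2m$. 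Hence Proposition~\ref{prp:bounded generation imply anick} gives $\Omega\DJ(\K)\in\prod\mathcal{P}$ localised away from $P$.

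To obtain the statement for $\ZK$, I would invoke the splitting $\Omega\DJ(\K)\simeq(S^1)^m\times\OZK$ of Theorem~\ref{thm:uxasplitfib}: it exhibits $\OZK$ as a retract of the loop space $\Omega\DJ(\K)$, which is an $H$-space lying in $\prod\mathcal{P}$ localised away from $P$ by the previous step. Lemma~\ref{lem:PTclosedunderret}(1), which is valid localised away from a set of primes, then yields $\OZK\in\prod\mathcal{P}$ localised away from $P$. (Alternatively, one could feed the multigraded description of $H_*(\OZK;\k)$ from Proposition~\ref{prp:bb polynomials and loop homology of zk} directly into Proposition~\ref{prp:bounded generation imply anick}, but the retract route is shorter and avoids re-running the multigrading bookkeeping.)

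For the explicit numerical claim, I would simply bound the two pieces of $P$: Theorem~\ref{thm:djk has fpt} gives $\max\tau(\OZK)\le f(m)\le 2^{m\cdot 2^{2^m}}$, while every prime $p<2m$ trivially satisfies $p\le 2^{m\cdot 2^{2^m}}$; hence $P\subseteq\{p:p\le 2^{m\cdot 2^{2^m}}\}$ and the decompositions persist after localising away from this larger finite set. Since the resulting decomposition is into spheres and loops on spheres, hence in particular into spaces of $\prod(\mathcal{P}\cup\mathcal{T}_0)$, and is valid at every prime outside a finite set, Anick's conjecture for $\DJ(\K)$ and for $\ZK$ follows immediately.

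Granting the results quoted above, the argument is a short assembly and presents no real difficulty; the points that require a little care are that condition~(b) of Proposition~\ref{prp:bounded generation imply anick} is field-independent, so that it is available uniformly for all $p\notin P$, and that Lemma~\ref{lem:PTclosedunderret}(1) applies here because $\Omega\DJ(\K)$ is manifestly an $H$-space (so no appeal to Remark~\ref{rmk:Hspheres} is needed). The genuine content of the theorem resides in the two inputs taken as given, namely the FPT property (Theorem~\ref{thm:djk has fpt}) and the degree bound on multiplicative generators of $H_*(\Omega\DJ(\K);\k)$ (Proposition~\ref{prp:hodj bound on degrees}), both of which ultimately rest on the Backelin--Roos stabilisation theorem.
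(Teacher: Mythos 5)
Your proof is correct and follows essentially the same route as the paper: apply Proposition~\ref{prp:bounded generation imply anick} to $\DJ(\K)$ with $N=2m$, using Theorem~\ref{thm:djk has fpt} for condition~(a) and Proposition~\ref{prp:hodj bound on degrees} for condition~(b), then pass to $\OZK$ via the retraction of Theorem~\ref{thm:uxasplitfib} and Lemma~\ref{lem:PTclosedunderret}(1). If anything your write-up is slightly tidier than the paper's, since you keep the finer set $P$ throughout and only invoke the crude bound $2^{m\cdot 2^{2^m}}$ at the end for the explicit numerical claim.
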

\begin{proof}
Localise away from $P$. The algebras $H_*(\Omega\DJ(\K);\FF_p)$ are generated in degree $< 2m$ by Proposition \ref{prp:hodj bound on degrees}, and $H_*(\Omega\DJ(\K);\ZZ)$ has no $p$-torsion for $p> 2^{m \cdot 2^{2^m}}$ by Theorem \ref{thm:djk has fpt}. Hence $\Omega \DJ(\K) \in \prod\mathcal{P}$ by Proposition ~\ref{prp:bounded generation imply anick} applied for $X=\DJ(\K)$, $Q=\varnothing$.
By Theorem ~\ref{thm:uxasplitfib}, $\Omega \ZK$ retracts off $\Omega \DJ(\K)$, and so part (1) of Lemma ~\ref{lem:PTclosedunderret} implies $\Omega \ZK \in \prod\mathcal{P}$.
\end{proof}
Following the proof of \cite[Theorem 1.2]{vylegzhanin}, for a sufficiently large prime $p$, we determine the $p$-primary component of $\pi_*(\ZK)$ in terms of homotopy groups of spheres. Recall the reflected Backelin-Berglund polynomial $\widehat{bb}_{\K_J,\QQ}(t)=t^{|J|}bb_{\K_J,\QQ}(t^{-1})$, and recall that the polynomials $bb_{\K_J,\QQ}(z)$ can be computed using Berglund's formula (Theorem \ref{thm:berglund_formula}).
\begin{corollary}
\label{crl:MACenumhtpy}
     Let $\K$ be a simplicial complex on vertex set $[m]$. Let $P$ be a set of primes containing $\tau(\OZK)$ and all primes less than $2m$. Localised away from $P$,
    there is a homotopy equivalence
    $$\Omega \ZK\simeq \prod_{n\geq 3}(\Omega S^n)^{\times D_n},$$
    where the numbers $D_n\geq 0$ are determined by the identity
    $$\prod_{n\geq 3}(1-t^{n-1})^{D_n}=\sum_{J\subseteq[m]}\widehat{bb}_{\K_J,\QQ}(t)\cdot t^{|J|}\in\ZZ[[t]].$$
    In particular, $$\pi_N(\ZK) \otimes \ZZ[1/P]\cong\bigoplus_{n=3}^N\pi_N(S^n)^{\oplus D_n} \otimes \ZZ[1/P],~N\geq 3.$$
\end{corollary}
\begin{proof}
Localise away from $P$. By Theorem \ref{thm:anick for djk}, $\OZK\in\prod\mathcal{P}$, and \cite[Lemma 6.1]{vylegzhanin} implies that $\OZK$ is homotopy equivalent to a product of loops on spheres. Since $\ZK$ is $2$-connected \cite[Proposition
4.3.5 (a)]{BP15}, there are no $\Omega S^2$ factors. The identity between formal power series is obtained by computing $1/F(H_*(\OZK;\QQ);t)$ twice, using Proposition \ref{prp:bb polynomials and loop homology of zk}.
\end{proof}

\section{Homotopy groups of partial quotients and simply connected toric orbifolds}
\label{sec:partquoandorbi}

The action of tori on moment-angle complexes give rise to spaces which are of much interest in toric topology. In this section, we use Corollary ~\ref{crl:MACenumhtpy} to enumerate the homotopy groups of quotients of certain torus actions. The first case are partial quotients, which correspond to free actions. This class includes \emph{quasitoric manifolds} \cite[\S 7.3]{BP15} \cite{dj}, topological analogues of smooth projective toric varieties over $\CC$. The second case are simply connected toric orbifolds (i.e. toric varieties with finite quotient singularities), which correspond to certain almost free actions. 

\subsection{Partial quotients}

Let $\K$ be a simplicial complex on $[m]$, and let $T^{m-n} \subseteq T^m$ be a subtorus of $T^m$ which acts freely on $\ZK$. The quotient $\ZK/T^{m-n}$ is called a \textit{partial quotient}. 

Since $T^{m-n}$ acts freely, there is a homotopy equivalence $ET^{m-n} \times_{T^{m-n}} \ZK \simeq \ZK/T^{m-n}$, and a homotopy fibration $\ZK \rightarrow \ZK/T^{m-n} \rightarrow BT^{m-n}$. The long exact sequence of homotopy groups induced by the homotopy fibration implies that $\ZK/T^{m-n}$ is simply connected (recall that $\ZK$ is $2$-connected by \cite[Proposition 4.3.5]{BP15}).

Therefore, we can apply Lemma ~\ref{lem:actionfibsplit} and Corollary ~\ref{crl:MACenumhtpy} to obtain the following. 

\begin{proposition}
\label{prop:partquotentdecomp}
    Let $\K$ be a simplicial complex on $[m]$, and let $\ZK/T^{m-n}$ be a partial quotient. Then there is a homotopy equivalence
    $$\Omega (\ZK/T^{m-n})\simeq T^{m-n}\times \OZK.$$
    In particular, $\pi_2(\ZK/T^{m-n})\simeq\ZZ^{m-n}$, and $\pi_N(\ZK/T^{m-n})\cong\pi_N(\ZK)$ for $N\geq 3$. \qed
\end{proposition}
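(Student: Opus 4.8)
The plan is to combine the two tools flagged in the sentence preceding the statement, namely Lemma~\ref{lem:actionfibsplit} and Corollary~\ref{crl:MACenumhtpy}, after first checking that their hypotheses are met in the partial quotient setting. Since $T^{m-n}$ acts freely on $\ZK$, the Borel construction $ET^{m-n}\times_{T^{m-n}}\ZK$ is homotopy equivalent to the orbit space $\ZK/T^{m-n}$, so the ``in particular'' clause of Lemma~\ref{lem:actionfibsplit} applies verbatim: there is a closed connected subgroup $H=T^{m-n}\subseteq T^m$ acting on $\ZK$ with $EH\times_H\ZK\simeq\ZK/H$, giving the homotopy equivalence $\Omega(\ZK/T^{m-n})\simeq T^{m-n}\times\OZK$. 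This is the first and main step; the only thing to verify beyond citing the lemma is that the integer $n$ appearing there (defined by $H\cong T^{m-n}$) is exactly the $n$ in our notation, which is true by construction.

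Next I would read off the consequences for homotopy groups by taking homotopy groups of the product decomposition. Applying $\pi_N(-)$ to $\Omega(\ZK/T^{m-n})\simeq T^{m-n}\times\OZK$ and using $\pi_N(\Omega Y)\cong\pi_{N+1}(Y)$ together with $\pi_N(T^{m-n})=0$ for $N\geq 2$ and $\pi_1(T^{m-n})\cong\ZZ^{m-n}$, one gets $\pi_2(\ZK/T^{m-n})\cong\pi_1(\Omega(\ZK/T^{m-n}))\cong\ZZ^{m-n}\oplus\pi_1(\OZK)$; since $\ZK$ is $2$-connected by \cite[Proposition 4.3.5]{BP15}, $\pi_1(\OZK)=\pi_2(\ZK)=0$, so $\pi_2(\ZK/T^{m-n})\cong\ZZ^{m-n}$. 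For $N\geq 3$ one has $\pi_N(\ZK/T^{m-n})\cong\pi_{N-1}(\Omega(\ZK/T^{m-n}))\cong\pi_{N-1}(T^{m-n})\oplus\pi_{N-1}(\OZK)\cong\pi_{N-1}(\OZK)\cong\pi_N(\ZK)$, as claimed. Strictly speaking one should note that the simple connectivity of $\ZK/T^{m-n}$ (needed implicitly, and recorded in the paragraph just before the statement from the long exact sequence of $\ZK\to\ZK/T^{m-n}\to BT^{m-n}$) guarantees these identifications of homotopy groups of the loop space with those of the space are unproblematic.

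There is essentially no obstacle here: the statement is a direct corollary of results already in place, and the proof is a one-line invocation followed by a bookkeeping argument with long exact sequences — which is presumably why the authors end it with \qed rather than a displayed proof. If anything, the only point requiring a moment's care is making sure the numerical shift between $\pi_N$ of the loop space and $\pi_N$ of the quotient is handled consistently (the dimension of the torus factor contributes only in degree $2$ of the quotient, i.e. degree $1$ of the loop space), and that Corollary~\ref{crl:MACenumhtpy} is not actually needed for this particular proposition — it is the tool used afterwards to make the factors of $\OZK$, and hence of $\pi_N(\ZK/T^{m-n})$, explicit, but the clean statement above follows already from Lemma~\ref{lem:actionfibsplit} alone. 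I would therefore present the proof as: verify the Borel construction identification, apply Lemma~\ref{lem:actionfibsplit}, and extract the homotopy group statements from the long exact sequences using $2$-connectivity of $\ZK$.
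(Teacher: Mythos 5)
Your proof is correct and follows the same route the paper takes: the paper's own ``proof'' is just the paragraph preceding the statement, which verifies simple connectivity and the Borel construction identification and then invokes Lemma~\ref{lem:actionfibsplit}, exactly as you do. Your observation that Corollary~\ref{crl:MACenumhtpy} is not actually needed here (only in the subsequent Corollary~\ref{cor:anick for partial quotient} and Corollary~\ref{crl:orbifold p-local homotopy type}) is also accurate.
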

The following follows immediately from Theorem \ref{thm:anick for djk} and Proposition ~\ref{prop:partquotentdecomp}.
\begin{corollary}
\label{cor:anick for partial quotient}
Let $\ZK/T^{m-n}$ be a partial quotient. Then $\Omega (\ZK/T^{m-n})\in\prod\mathcal{P}$ after localising away from a finite set of primes. In particular,
Anick's conjecture holds for $\ZK/T^{m-n}$.\qed
\end{corollary}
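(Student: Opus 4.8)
\textbf{Proof proposal for Corollary~\ref{cor:anick for partial quotient}.}
The plan is to simply combine the two results cited in the statement. First I would recall that by Theorem~\ref{thm:anick for djk}, localised away from the finite set of primes $P=\tau(\OZK)\cup\{p:p<2m\}$, we have $\Omega\ZK\in\prod\mathcal{P}$. Next, Proposition~\ref{prop:partquotentdecomp} provides a homotopy equivalence $\Omega(\ZK/T^{m-n})\simeq T^{m-n}\times\OZK$. Since $T^{m-n}=(S^1)^{m-n}$ and $S^1=\Omega S^2$, the torus factor lies in $\prod\mathcal{P}$ as well (localised away from any set of primes), and $\prod\mathcal{P}$ is closed under taking finite type products by definition. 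Hence $\Omega(\ZK/T^{m-n})\in\prod\mathcal{P}$ localised away from $P$.

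For the final sentence, recall that Anick's conjecture asserts that a simply connected finite $CW$-complex $X$ satisfies $\Omega X\in\prod(\mathcal{P}\cup\mathcal{T}_0)$ after localising at any sufficiently large prime. As noted in the text preceding Proposition~\ref{prop:partquotentdecomp}, the partial quotient $\ZK/T^{m-n}$ is simply connected (via the long exact homotopy sequence of $\ZK\to\ZK/T^{m-n}\to BT^{m-n}$ together with $2$-connectivity of $\ZK$), and it is a finite $CW$-complex. Since $\prod\mathcal{P}\subseteq\prod(\mathcal{P}\cup\mathcal{T}_0)$, localising at any prime $p\notin P$ gives $\Omega(\ZK/T^{m-n})\in\prod(\mathcal{P}\cup\mathcal{T}_0)$, which is exactly the assertion of Anick's conjecture for $\ZK/T^{m-n}$.

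There is essentially no obstacle here: the corollary is a formal consequence of Theorem~\ref{thm:anick for djk} and Proposition~\ref{prop:partquotentdecomp}, with the only minor points being (i) observing that $T^{m-n}\in\prod\mathcal{P}$ and that $\prod\mathcal{P}$ is closed under products, and (ii) checking that $\ZK/T^{m-n}$ meets the finiteness and simple-connectivity hypotheses in the statement of Anick's conjecture, both of which were already established in the discussion leading up to Proposition~\ref{prop:partquotentdecomp}.
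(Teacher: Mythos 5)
Your argument is correct and follows exactly the same route as the paper: combine Theorem~\ref{thm:anick for djk} with Proposition~\ref{prop:partquotentdecomp}. One small slip: $S^1$ is \emph{not} homotopy equivalent to $\Omega S^2$ (indeed $\Omega S^2 \simeq S^1 \times \Omega S^3$), but $S^1 = S^{2\cdot 1 - 1}$ already belongs to $\mathcal{P}$ by definition, so $T^{m-n}\in\prod\mathcal{P}$ holds directly without any reference to $\Omega S^2$.
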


Note that every quasitoric manifold $(S^1)^n\curvearrowright M^{2n}(P,\lambda)$ is equivariantly homeomorphic to a partial quotient $\Z_{\K_P}/T^{m-n}$ by \cite[Proposition 7.3.13]{BP15}. Simply connected smooth toric varieties over $\mathbb{C}$ are equivariantly homotopy equivalent to partial quotients, see \cite[Proposition 4.1]{franz_toric}. In general, smooth toric varieties are covered by partial quotients, see Proposition \ref{prp:smooth toric} below.

\subsection{Simply connected toric orbifolds}
In this section we study the homotopy of \emph{toric orbifolds} $X_\Sigma$, i.e. normal toric varieties over $\mathbb{C}$ with finite quotient singularities. By \cite[Theorem 11.4.8]{cls}, this happens if and only if the corresponding fan $\Sigma$ is simplicial.
We assume that $X_\Sigma$ has no torus factors, but we do not require $X_\Sigma$ to be smooth or complete. Our references for the geometry and topology of toric varieties are \cite{cls}, \cite[Ch. 5]{BP15} and \cite{franz_toric}.

We begin with a model for toric orbifolds up to equivariant homotopy equivalence. Let $\Sigma$ be a rational simplicial fan in a lattice $N\simeq\ZZ^n$ with $m$ rays, and let $a_1,\dots,a_m\in N$ be the primitive vectors on these rays. We obtain a simplicial complex $$\K=\{I\subseteq[m]:\{a_i:i\in I\}\text{ span a cone of }\Sigma\}$$
and a linear map $A:\ZZ^m\to N,$ $e_i\mapsto a_i$, such that the vectors $a_i\in N$ are primitive. In fact, the fan $\Sigma$ is determined by the pair $(\K,A)$, and this data corresponds to a fan if and only if the following conditions hold. For any $I\in\K$, consider the closed convex cone
$$\sigma_I:=\mathbb{R}_{\geq 0}\{a_i:i\in I\}\subseteq N\otimes_\ZZ\RR;$$
then $(\K,A)$ form a simplicial fan if and only if 
\begin{itemize}
    \item For any $I\in\K$, the elements $\{a_i:i\in I\}\subseteq N$ are linearly independent (equivalently, $\dim\sigma_I=|I|+1$);
    \item For any $I,J\in\K$, $\sigma_I\cap\sigma_J=\sigma_{I\cap J}$.
\end{itemize}

By assumption, $X_\Sigma$ has no torus factors, i.e. $\Sigma$ spans $N\otimes_\ZZ\RR$ (otherwise we can split off a sublattice, which gives a splitting $X_\Sigma\cong X_{\Sigma'}\times(\CC^\times)^{\times r}$ for some $r>0$ \cite[Proposition 3.3.9]{cls}).

Consider the subgroup
$$H_\Sigma:=\Ker(\exp A:(S^1)^m\to N\otimes_\ZZ S^1)\subseteq (S^1)^m.$$ The subgroup $H_\Sigma$ acts on $\ZK$, so we can consider the quotient $\ZK/H_\Sigma$. While the quotient $\ZK/H_\Sigma$ is not itself a toric orbifold, it is a homotopy theoretic model of one by the following result of Franz. In fact, $\ZK/H_\Sigma$ is a ``compact version'' of Cox's quotient construction for $X_\Sigma$, see \cite[Theorem 5.4.5(b)]{BP15}, where Cox's construction is obtained by replacing the pair $(D^2,S^1)$ with $(\CC,\CC^\times)$.

\begin{theorem}[{\cite[Proposition 4.1]{franz_toric}}]
\label{thm:toricorbifoldquotient}
    If $X_\Sigma$ is a toric orbifold associated to a rational simplicial fan $\Sigma = (\K,A)$, then $X_\Sigma$ is equivariantly homotopy equivalent to $\ZK/H_\Sigma$. \qed
\end{theorem}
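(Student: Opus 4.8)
This is \cite[Proposition 4.1]{franz_toric}; we outline the strategy one would follow. The plan is to compare Cox's quotient presentation of $X_\Sigma$ with the polyhedral product presentation of $\ZK$, routed through an intermediate space that still carries a good torus action.

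First I would bring in Cox's construction. Since $\Sigma=(\K,A)$ is simplicial and spans $N\otimes\RR$, it presents $X_\Sigma$ as a geometric quotient $X_\Sigma\cong\big(\CC^m\setminus Z(\Sigma)\big)/G$, where $G=\Ker\big(\exp A_\CC\colon(\CC^\times)^m\to N\otimes\CC^\times\big)$ and $Z(\Sigma)$ is the vanishing locus of the irrelevant ideal (see \cite[\S 5.1]{cls}). Unwinding the definition of $Z(\Sigma)$, one has $z\in\CC^m\setminus Z(\Sigma)$ exactly when $\{i:z_i=0\}\in\K$, i.e. $\CC^m\setminus Z(\Sigma)=(\underline{\CC},\underline{\CC^\times})^\K$; and by \cite[\S 4.7]{BP15} the moment-angle complex $\ZK=(\underline{D^2},\underline{S^1})^\K$ is a $(S^1)^m$-equivariant deformation retract of it.

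Next I would peel off the non-compact directions of $G$. Using $(\CC^\times)^m=(S^1)^m\times\RR_{>0}^m$ and the fact that $\exp A_\CC$ respects this splitting, one gets $G=H_\Sigma\times R$, where $H_\Sigma=\Ker\big(\exp A\colon(S^1)^m\to N\otimes S^1\big)$ is the compact subgroup from the statement and $R:=G\cap\RR_{>0}^m\cong\Ker(A\otimes\RR)\cong\RR^{m-n}$ is contractible. Hence $X_\Sigma\cong\big((\CC^m\setminus Z(\Sigma))/H_\Sigma\big)/R$, and two homotopy equivalences remain. The $(S^1)^m$-equivariant deformation retraction above is in particular $H_\Sigma$-equivariant, and any equivariant deformation retraction descends to one on orbit spaces, so $(\CC^m\setminus Z(\Sigma))/H_\Sigma\simeq\ZK/H_\Sigma$. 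The residual $R$-action on $(\CC^m\setminus Z(\Sigma))/H_\Sigma$ is free --- because $R\cong\RR^{m-n}$ has no nontrivial finite subgroup, every $G$-stabiliser is finite (this is where simpliciality of $\Sigma$ is used), and $G=H_\Sigma\times R$ --- and proper, being obtained from the proper $G$-action by dividing out the compact factor $H_\Sigma$; thus the quotient map $(\CC^m\setminus Z(\Sigma))/H_\Sigma\to X_\Sigma$ is a principal $\RR^{m-n}$-bundle, hence trivial, hence a homotopy equivalence. Composing gives $X_\Sigma\simeq\ZK/H_\Sigma$, and one checks this is equivariant for the compact torus $T=(S^1)^m/H_\Sigma\cong N\otimes S^1\subseteq T_N$: all groups in play are abelian subgroups of $(\CC^\times)^m$, so the actions commute and pass compatibly to the successive quotients, and $T_N$ deformation retracts onto $T$.

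The step I expect to be the main obstacle is dealing with the fact that $G$ does \emph{not} act freely on $\CC^m\setminus Z(\Sigma)$ in general --- the non-free orbits are precisely what create the orbifold singularities of $X_\Sigma$ --- so one cannot treat $\CC^m\setminus Z(\Sigma)\to X_\Sigma$ as a principal $G$-bundle directly. The point of the decomposition $G=H_\Sigma\times R$ is that all of the non-freeness is confined to the compact factor $H_\Sigma$, where it does no harm (an equivariant deformation retraction descends to quotients regardless of freeness), while the contractible factor $R$ still acts freely and properly. The one genuinely technical input along the way is the properness of the Cox action of $G$ on $\CC^m\setminus Z(\Sigma)$ for a simplicial fan; an alternative to the bundle step is to show directly, by a Kempf--Ness-type argument, that every $G$-orbit meets $\ZK$ in a single $H_\Sigma$-orbit, which would even exhibit the natural map $\ZK/H_\Sigma\to X_\Sigma$ as a homeomorphism, although this is cleanest to set up when $\Sigma$ is complete.
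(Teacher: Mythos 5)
The paper cites this result from Franz \cite{franz_toric} and supplies no proof (the \(\qed\) closes the cited statement), so there is no in-paper argument to compare against; I will assess your sketch on its own. Your reconstruction is correct and is the expected one: Cox's quotient \(X_\Sigma = \bigl(\CC^m\setminus Z(\Sigma)\bigr)/G\) with \(\CC^m\setminus Z(\Sigma)=(\underline{\CC},\underline{\CC^\times})^\K\); the \((S^1)^m\)-equivariant deformation retraction onto \(\ZK\), which descends to \(H_\Sigma\)-quotients; the splitting \(G=H_\Sigma\times R\) with \(R=G\cap\RR_{>0}^m\cong\RR^{m-n}\) contractible; and contracting away the free proper \(R\)-action on the intermediate quotient. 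The two subtleties you isolate are exactly the right ones: simpliciality forces finite \(G\)-isotropy, confining all non-freeness to the compact factor \(H_\Sigma\), where an equivariant deformation retraction descends to quotients regardless; and properness of the Cox action is what makes \((\CC^m\setminus Z(\Sigma))/H_\Sigma\to X_\Sigma\) a principal \(\RR^{m-n}\)-bundle.

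One step worth making explicit rather than gesturing at: freeness of the induced \(R\)-action on \((\CC^m\setminus Z(\Sigma))/H_\Sigma\) does need a short argument. If \(r\cdot[z]=[z]\) then \(rz=hz\) for some \(h\in H_\Sigma\), so \(rh^{-1}\in G_z\); since \(G_z\) is finite and \(G=H_\Sigma\times R\), the image of \(G_z\) under the projection \(G\to R\cong\RR^{m-n}\) is a finite subgroup of \(\RR^{m-n}\), hence trivial, so \(r\in H_\Sigma\cap R=\{1\}\). With that spelled out, the sketch is complete modulo the standard properness input you already flag.
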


This result will allow us to give a decomposition of $\Omega (\ZK/H_\Sigma)$, applying Lemma \ref{lemma:quotient-localisation} to the subgroup $H_\Sigma\subset (S^1)^m\curvearrowright \ZK$, and then applying Lemma ~\ref{lem:actionfibsplit} to the Borel construction $ET_\Sigma\times_{T_\Sigma} \ZK$, where $T_\Sigma:=H_\Sigma^\circ\simeq T^{m-n}$ is the maximal connected subgroup of $H_\Sigma$. Note that we assume that $\K$ has no ghost vertices, hence $\ZK$ is simply connected by \cite[Proposition
4.3.5 (a)]{BP15}.

To go further, we describe $H_\Sigma$ and the action $H_\Sigma \curvearrowright \ZK$ in more detail. We first describe $\pi_0(H_\Sigma)$ and the fundamental group of the Borel construction for later use.  Let $\Sigma=(\K,A)$ be a simplicial fan in $N$, and let $$N_\Sigma:=\spn_\ZZ(a_1,\dots,a_m)=\Img(A)\subseteq N.$$
\begin{lemma}
\label{lmm:pi0(H)}
There are isomorphisms $\pi_1(EH_\Sigma\times_{H_\Sigma}\ZK)\cong\pi_0(H_\Sigma)\cong N/N_\Sigma$.
\end{lemma}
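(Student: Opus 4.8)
The plan is to extract both isomorphisms from the defining short exact sequence of tori. Consider the continuous homomorphism $\exp A \colon (S^1)^m \to N\otimes_\ZZ S^1$, whose kernel is $H_\Sigma$ by definition. Its image is $N_\Sigma \otimes_\ZZ S^1$, since $\exp A$ factors as $(S^1)^m \twoheadrightarrow \Img(A)\otimes_\ZZ S^1 = N_\Sigma\otimes_\ZZ S^1 \hookrightarrow N\otimes_\ZZ S^1$ (the first map is surjective because tensoring the surjection $\ZZ^m \twoheadrightarrow N_\Sigma$ with the divisible group $S^1$ stays surjective). So there is a short exact sequence of compact abelian Lie groups
$$1 \longrightarrow H_\Sigma \longrightarrow (S^1)^m \xrightarrow{\ \exp A\ } N_\Sigma\otimes_\ZZ S^1 \longrightarrow 1.$$

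First I would compute $\pi_0(H_\Sigma)$. Applying the long exact sequence of homotopy groups of this fibration, and using that $(S^1)^m$ and $N_\Sigma\otimes_\ZZ S^1$ are both tori (hence $K(\ZZ^\ast,1)$'s with vanishing $\pi_{\geq 2}$ and with $\pi_1$ equal to the respective character-dual lattices), we get the exact sequence
$$0 \longrightarrow \pi_1(H_\Sigma) \longrightarrow \ZZ^m \xrightarrow{\ A'\ } N_\Sigma \longrightarrow \pi_0(H_\Sigma) \longrightarrow 0,$$
where $A'\colon \ZZ^m\to N_\Sigma$ is the corestriction of $A$, which is surjective by definition of $N_\Sigma$. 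Hence $\pi_0(H_\Sigma)\cong N_\Sigma/\Img(A') = N_\Sigma/N_\Sigma$? — no: one must be careful that the map induced on $\pi_1$ of the torus $N_\Sigma\otimes_\ZZ S^1$ is $A'$ itself up to the canonical identification $\pi_1(L\otimes_\ZZ S^1)\cong L$, and that $\Coker(A'\colon \ZZ^m\to N_\Sigma)$ is actually $\Coker(A\colon\ZZ^m\to N)$ intersected appropriately — in fact since $N_\Sigma$ is a subgroup of $N$ and $\Img(A)=N_\Sigma$, we have $\Coker(\ZZ^m\xrightarrow{A}N) = N/N_\Sigma$, and the corestriction $A'$ is surjective so $\Coker(A')=0$. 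The resolution is that the relevant cokernel in the exact sequence is the one for the map into the sublattice $N_\Sigma$, but the identification $\pi_0(H_\Sigma)\cong N/N_\Sigma$ comes instead from comparing with the sequence over $N$: one has $H_\Sigma = \Ker(\exp A\colon (S^1)^m\to N\otimes_\ZZ S^1)$ and the snake lemma applied to the map of short exact sequences relating $\ZZ^m\xrightarrow{A}N$ (with cokernel $N/N_\Sigma$ and kernel $0$, as $A$ need not be injective — here I would note $\Ker A$ is the relevant lattice) and its exponentiation gives precisely $\pi_0(H_\Sigma)\cong \Tors(N/N_\Sigma) \oplus (\text{free part})$; since $N/N_\Sigma$ is a finitely generated abelian group, $\pi_0(H_\Sigma)\cong N/N_\Sigma$ as claimed. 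I would clean this up by directly applying the snake lemma to
$$\begin{array}{ccccccccc}
0 & \to & \ZZ^m & \xrightarrow{A} & N & \to & N/N_\Sigma & \to & 0\\
 & & \downarrow & & \downarrow & & & &\\
0 & \to & \ZZ^m & \xrightarrow{A} & N & \to & N/N_\Sigma & \to & 0
\end{array}$$
tensored/exponentiated appropriately, or more cleanly to the sequence $0\to \ZZ^m\xrightarrow{A} N\to N/N_\Sigma\to 0$ after applying $\Hom(-, S^1)$-type reasoning; the point is $H_\Sigma = \ZZ^m\otimes S^1 \times_{N\otimes S^1}\{0\}$ computed via $\Tor$, giving $\pi_0(H_\Sigma)\cong \Tor^\ZZ_1(N/N_\Sigma, S^1)$'s torsion plus $\Ker A$-part, and identifying this with $N/N_\Sigma$.

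For the second isomorphism $\pi_1(EH_\Sigma\times_{H_\Sigma}\ZK)\cong \pi_0(H_\Sigma)$: I would use the Borel fibration $\ZK \to EH_\Sigma\times_{H_\Sigma}\ZK \to BH_\Sigma$ together with the fibration $H_\Sigma \to \ZK \to EH_\Sigma\times_{H_\Sigma}\ZK$ coming from the principal-bundle-up-to-homotopy picture (note $\ZK \simeq EH_\Sigma\times \ZK$ equivariantly, so $\ZK/\!\!/H_\Sigma := EH_\Sigma\times_{H_\Sigma}\ZK$ receives $\ZK$ as a "total space" with fibre $H_\Sigma$). Since $\ZK$ is $2$-connected by \cite[Proposition 4.3.5(a)]{BP15}, in particular simply connected, the long exact sequence $\pi_1(\ZK)=0 \to \pi_1(EH_\Sigma\times_{H_\Sigma}\ZK) \to \pi_0(H_\Sigma)\to \pi_0(\ZK)=0$ gives $\pi_1(EH_\Sigma\times_{H_\Sigma}\ZK)\cong \pi_0(H_\Sigma)$ directly. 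I would just need to confirm the maps in this portion of the sequence are the evident ones; connectivity of $\ZK$ does all the work. The main obstacle is bookkeeping in the first isomorphism — correctly identifying which cokernel/torsion group appears when one exponentiates a non-injective lattice map and tensors with $S^1$, i.e. making the snake-lemma computation watertight so that the answer is genuinely $N/N_\Sigma$ and not, say, its torsion subgroup or an extension thereof. I expect this to be resolved cleanly by working with the $\Tor$-exact sequence for $-\otimes_\ZZ S^1$ applied to $0\to\ZZ^m\xrightarrow{A}N\to N/N_\Sigma\to 0$, noting $S^1$ is divisible so $\Tor_1^\ZZ(N/N_\Sigma,S^1)\cong N/N_\Sigma$ when $N/N_\Sigma$ is finitely generated, combined with the fact that $A$ is injective whenever $\Sigma$ spans $N\otimes\RR$ (our standing assumption of no torus factors forces $\Img(A)=N_\Sigma$ to be full rank, but $A$ itself can still have torsion kernel only if... — actually $\ZZ^m$ is free so $\Ker A$ is free, and full rank image with free kernel means we may have a nontrivial free kernel contributing to $\pi_1(H_\Sigma)$ but not to $\pi_0$).
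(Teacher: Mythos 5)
Your argument for the second isomorphism ($\pi_1(EH_\Sigma\times_{H_\Sigma}\ZK)\cong\pi_0(H_\Sigma)$ via the fibration $H_\Sigma\to\ZK\to EH_\Sigma\times_{H_\Sigma}\ZK$ and $2$-connectivity of $\ZK$) is correct and is exactly what the paper does. But the first half contains a genuine error at the very first step, and this error is what forces all the subsequent hand-wringing.

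You claim that $\exp A$ factors as $(S^1)^m\twoheadrightarrow N_\Sigma\otimes_\ZZ S^1\hookrightarrow N\otimes_\ZZ S^1$, so that its image is $N_\Sigma\otimes_\ZZ S^1$ viewed as a subgroup. The second arrow is \emph{not} a monomorphism. Since $\Sigma$ spans $N\otimes_\ZZ\RR$, the sublattice $N_\Sigma\subseteq N$ has full rank, so $N/N_\Sigma$ is finite; tensoring $0\to N_\Sigma\to N\to N/N_\Sigma\to 0$ with $S^1$ gives
\[
\Tor_1^\ZZ(N/N_\Sigma,S^1)\to N_\Sigma\otimes_\ZZ S^1\to N\otimes_\ZZ S^1\to (N/N_\Sigma)\otimes_\ZZ S^1=0,
\]
and $\Tor_1^\ZZ(N/N_\Sigma,S^1)\cong N/N_\Sigma$ since $N/N_\Sigma$ is finite and $S^1$ is divisible. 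So $N_\Sigma\otimes_\ZZ S^1\to N\otimes_\ZZ S^1$ is a \emph{surjective} finite covering of tori with kernel $\cong N/N_\Sigma$, not an inclusion. Consequently the image of $\exp A$ is all of $N\otimes_\ZZ S^1$, and the correct short exact sequence is $1\to H_\Sigma\to(S^1)^m\to N\otimes_\ZZ S^1\to 1$. With this codomain, the long exact sequence in homotopy reads $\ZZ^m\xrightarrow{A}N\to\pi_0(H_\Sigma)\to 0$, and $\pi_0(H_\Sigma)\cong\Coker A=N/N_\Sigma$ falls out immediately — no snake lemma, no case analysis of $\Ker A$, no detour through $\Tor$. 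You correctly notice mid-argument that the naive calculation with $N_\Sigma$ in the codomain gives $\pi_0(H_\Sigma)=0$, which is a red flag; the repair you then attempt never pins down that the original factorization claim is false, so the resulting argument remains unsound even though it asserts the right conclusion.
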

\begin{proof}
Since the rays of $\Sigma$ span $N\otimes_\ZZ\RR$, the map $\exp A:(S^1)^m\to N\otimes_\ZZ S^1$ is onto. For the fibration $H_\Sigma\to (S^1)^m\to N\otimes_\ZZ S^1$ of topological groups, the exact sequence of homotopy groups
$$\pi_1((S^1)^m)\overset{(\exp A)_*}\longrightarrow\pi_1(N\otimes_\ZZ S^1)\to \pi_0(H_\Sigma)\to \pi_0((S^1)^m)$$
is identified with
$$\ZZ^m\overset{A}\longrightarrow N\to\pi_0(H_\Sigma)\to 0.$$
Hence, $\pi_0(H_\Sigma)\cong\Coker A=N/N_\Sigma$. The isomorphism $\pi_1(EH_\Sigma\times_{H_{\Sigma}}\ZK)\cong\pi_0(H_\Sigma)$ follows from part (1) of Lemma \ref{lemma:quotient-localisation}. 
\end{proof}

Now consider the sublattice
$$\widehat{N}_\Sigma:=\spn_{\ZZ}(N\cap |\Sigma|)\subset N,$$
where $|\Sigma|=\bigcup_{I\in\K}\sigma_I\subset N\otimes_\ZZ\RR$ is the support of the fan $\Sigma$. 

\begin{proposition}[{\cite[Theorem 12.1.10]{cls}}]
\label{prp:orbifold fund group}
For any fan $\Sigma$, $\pi_1(X_\Sigma)\cong N/\widehat{N}_\Sigma$.\qed
\end{proposition}
Clearly, $N_\Sigma\subset \widehat{N}_\Sigma$ since $a_1,\dots,a_m\in N\cap |\Sigma|$.
\begin{corollary}
\label{crl:orbifold simply connected}
If $\Sigma$ has a cone of dimension $n$, or $N=\spn_\ZZ(a_1,\dots,a_m)$, then $X_\Sigma$ is simply connected.\qed
\end{corollary}

Next, we describe the stabilisers of the action $H_\Sigma\curvearrowright\ZK$. 
Define
\[P_\Sigma:=\bigcup_{I\in\K}\{\text{prime divisors of }|\Tors N/N_I|\},\quad N_I:=\spn_\ZZ(a_i:i\in I)\subseteq N.\] Since the groups $N/N_I$ are finitely generated, $P_\Sigma$ is a finite set of primes.
\begin{proposition}
\label{prp:orbifold stabilisers compact}
For each simplex $I\in\K$ consider the composite map
$$e_I=\exp(A|_I):(S^1)^I\hookrightarrow (S^1)^m\overset{\exp A}\longrightarrow N\otimes_\ZZ S^1$$
and the group $H_\Sigma(I):=\Ker(e_I)\subseteq H_\Sigma.$ Then
\begin{enumerate}
    \item The set of stabilizers $\{(H_\Sigma)_z:z\in \ZK\}$ of the action $H_\Sigma\curvearrowright\ZK$ coincides with the set $\{H_\Sigma(I):I\in\K\}$.
    \item For any $I\in\K$, $H_\Sigma(I)\cong\Tors N/N_I$. In particular, $H_\Sigma\curvearrowright\ZK$ is almost free.
    \item $P_\Sigma=\bigcup_{z\in\ZK}\{\text{prime divisors of }|(H_\Sigma)_z|\}$.
\end{enumerate}
\end{proposition}
\begin{proof}
For a point $z=(z_1,\dots,z_m)\in(D^2)^{\times m}$, let $\omega(z)=\{i\in[m]:z_i=0\}$. Then we have $(H_\Sigma)_z=H_\Sigma(\omega(z))$; the proof is similar to \cite[Proposition 5.4.6(a)]{BP15}. By definition, we have $\omega(z)\in\K$ for $z\in\ZK$. Conversely, for every $I\in\K$ there is $z\in\ZK$ such that $\omega(z)=I$. This proves (1). For (2), note that the set $\{a_i:i\in I\}\subseteq N\simeq\ZZ^n$ is linearly independent, so there is a short exact sequence
$$0\to \ZZ^I\overset{A|_I}\longrightarrow N\to N/N_I\to 0.$$
Applying the functor $(-)\otimes_\ZZ S^1$, we obtain an exact sequence
$$0\to\Tor^\ZZ_1(N/N_I,S^1)\to (S^1)^I\overset{e_I}\longrightarrow N\otimes_\ZZ S^1.$$
Hence $H_\Sigma(I)=\Ker(e_I)\cong\Tor^\ZZ_1(N/N_I,S^1)\cong\Tors N/N_I$ is a finite group, so $H_\Sigma\curvearrowright \ZK$ is almost free.
\end{proof}

Consider the set of primes
$P'_\Sigma := \{\text{prime divisors of }|\Tors N/N_\Sigma|\}\cup P_\Sigma.$

\begin{lemma}
\label{lmm:PSigma=P'Sigma}
If $\Sigma$ has a cone of dimension $n$, then $P'_\Sigma=P_\Sigma$. 
\end{lemma}
\begin{proof}
Suppose $\sigma_I$ is of dimension $n$. Then $N_I=\spn_\ZZ(a_i:i\in I)\subseteq N$ is a sublattice of finite index. Also, $N_I\subseteq N_\Sigma$. Hence $N/N_\Sigma$ is a quotient group of the finite group $N/N_I$, so $|\Tors N/N_\Sigma|$ divides $|\Tors N/N_I|$.
\end{proof}

\begin{proposition}
\label{prp:orbifold is locally a borel construction}
Let $\Sigma=(\K,A)$ be a simplicial fan in $N\simeq\ZZ^n$. Consider the natural map $f:ET_\Sigma\times_{T_\Sigma}\ZK\to EH_\Sigma\times_{H_\Sigma}\ZK\to  \ZK/H_\Sigma$. Then
\begin{enumerate}
    \item $f$ induces an isomorphism on homology with coefficients in $\ZZ[1/P'_\Sigma]$.
    \item Suppose that $N=\widehat{N}_\Sigma$. Then $f$ is a homotopy equivalence localised away from $P_\Sigma'$.
\end{enumerate}
\end{proposition}
\begin{proof}
We apply Lemma \ref{lemma:quotient-localisation} to the almost free action $H_\Sigma\subset (S^1)^m\curvearrowright \ZK$.  It remains to show that
$$P'_\Sigma=\{\text{prime divisors of }|\pi_0(H_\Sigma)|\}\cup\bigcup_{z\in\ZK}\{\text{prime divisors of }|(H_\Sigma)_z|\},$$
and this follows from Lemma \ref{lmm:pi0(H)} and Proposition \ref{prp:orbifold stabilisers compact}.

Statement (2) follows from part (3) of Lemma \ref{lemma:quotient-localisation}, since $\pi_1(\ZK/H_\Sigma)=N/\widehat{N}_\Sigma$ by Proposition \ref{prp:orbifold fund group}.
\end{proof}

\begin{theorem}
\label{thm:orbifold p-local fibration}
    Let $X_\Sigma$ be a toric orbifold corresponding to a simplicial fan $\Sigma=(\K,A)$ in a lattice $N\simeq\ZZ^n$.
    Suppose that $N=\widehat{N}_\Sigma$, i.e. $X_\Sigma$ is simply connected. Then, localised away from the finite set of primes
    $$P_\Sigma'=\{\text{prime divisors of }|\Tors N/N_\Sigma|\}\cup \bigcup_{I\in\K}\{\text{prime divisors of }|\Tors N/N_I|\},$$ there is a homotopy equivalence $\Omega X_\Sigma \simeq \Omega \ZK \times T^{m-n}$.
\end{theorem}
\begin{proof}
By Theorem ~\ref{thm:toricorbifoldquotient}, $X_\Sigma$ is homotopy equivalent to $\ZK/H_\Sigma$. Localise away from primes in $P'_\Sigma$. Proposition ~\ref{prp:orbifold is locally a borel construction} implies that $ET_\Sigma \times_{T_\Sigma} \ZK \simeq \ZK/H_{\Sigma}$, where $T_\Sigma\subset T^m$ is a subtorus, so $T_\Sigma\simeq T^{m-n}$. Hence, by Lemma ~\ref{lem:actionfibsplit}, there is a homotopy equivalence $\Omega X_\Sigma \simeq \Omega (\ZK/H_{\Sigma}) \simeq \Omega \ZK \times T^{m-n}$.
\end{proof}

\begin{corollary}
\label{crl:orbifold p-local homotopy type}
Let $X_\Sigma$ be a toric orbifold corresponding to a simplicial fan $\Sigma=(\K,A)$ in a lattice $N\simeq\ZZ^n$. Suppose that $N=\widehat{N}_\Sigma$, i.e. $X_\Sigma$ is simply connected. Then, localised away from the finite set of primes $P=P'_\Sigma\cup\tau(\OZK)\cup\{p:p<2m\}$,
$$\Omega X_\Sigma\simeq T^{m-n}\times\prod_{n\geq 3}(\Omega S^n)^{\times D_n},$$
where the numbers $D_n$ are described in Corollary \ref{crl:MACenumhtpy}.
Moreover, $\pi_2(X_\Sigma)=H_2(X_\Sigma;\ZZ)$ and $$\pi_2(X_\Sigma)\otimes\ZZ[1/P]\simeq\ZZ^{m-n}\otimes\ZZ[1/P],\quad\pi_N(X_\Sigma)\otimes\ZZ[1/P]=\pi_N(\ZK)\otimes\ZZ[1/P],\quad N\geq 3.$$
\end{corollary}
\begin{proof}
Localise away from $P$. By Theorem ~\ref{thm:orbifold p-local fibration}, $\Omega X_\Sigma\simeq T^{m-n}\times\OZK$. Since $\ZK$ is $2$-connected and $X_\Sigma$ is $1$-connected, the rest follows from Corollary \ref{crl:MACenumhtpy} and the Hurewicz isomorphism.
\end{proof}

Combining Theorem ~\ref{thm:orbifold p-local fibration} with Theorem ~\ref{thm:anick for djk}, we obtain the following result.
\begin{corollary}
For any simply connected toric orbifold $X_\Sigma$, $\Omega X_\Sigma\in\prod\mathcal{P}$ after localising away from a finite set of primes. In particular, $\Omega X_\Sigma$ has FPT and $X_\Sigma$ satisfies Anick's conjecture.\qed
\end{corollary}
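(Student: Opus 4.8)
The plan is to combine the two principal structural theorems already established. First I would invoke Theorem~\ref{thm:orbifold p-local fibration}: a simply connected toric orbifold $X_\Sigma$ associated to a rational simplicial fan $\Sigma=(\K,A)$ in $N\simeq\ZZ^n$ satisfies $N=N_\Sigma$, so localised away from the finite set of primes $P_\Sigma$ there is a homotopy equivalence $\Omega X_\Sigma\simeq T^{m-n}\times\Omega\ZK$ for some $n<m$. Next I would apply Theorem~\ref{thm:anick for djk}, which gives $\Omega\ZK\in\prod\mathcal{P}$ after localising away from $\tau(\Omega\ZK)\cup\{p:p<2m\}$, another finite set. Taking $P:=P_\Sigma\cup\tau(\Omega\ZK)\cup\{p:p<2m\}$, both conclusions hold simultaneously after localisation away from this single finite set $P$.

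The remaining point is that the torus factor is harmless: $S^1=S^{2\cdot1-1}$ belongs to $\mathcal{P}$, so $T^{m-n}\simeq(S^1)^{\times(m-n)}$ lies in $\prod\mathcal{P}$, and a finite-type product of spaces each in $\prod\mathcal{P}$ is again in $\prod\mathcal{P}$. Hence, localised away from $P$, $\Omega X_\Sigma\simeq T^{m-n}\times\Omega\ZK\in\prod\mathcal{P}$, which is the first assertion.

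For the FPT claim I would argue that every space in $\prod\mathcal{P}$ has torsion-free integral homology; thus $H_*(\Omega X_\Sigma;\ZZ[1/P])$ is torsion-free, and since homology commutes with localisation, $H_*(\Omega X_\Sigma;\ZZ)$ has no $q$-torsion for any prime $q\notin P$. Therefore $\tau(\Omega X_\Sigma)\subseteq P$ is finite. Finally, Anick's conjecture requires $\Omega X_\Sigma\in\prod(\mathcal{P}\cup\mathcal{T}_0)$ localised at every sufficiently large prime; this is immediate since $P$ is finite and $\prod\mathcal{P}\subseteq\prod(\mathcal{P}\cup\mathcal{T}_0)$ (here $X_\Sigma\simeq\ZK/H_\Sigma$ is a compact, hence finite, $CW$-complex, because $\ZK$ is finite and $H_\Sigma\subseteq(S^1)^m$ is compact).

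There is essentially no obstacle in this argument; all the work resides in Theorems~\ref{thm:orbifold p-local fibration} and~\ref{thm:anick for djk}. The only things to check are bookkeeping the union of the three finite sets of primes and recording that $\prod\mathcal{P}$ contains the circle and is closed under finite-type products — both routine.
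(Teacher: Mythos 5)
Your proposal is correct and follows essentially the same approach as the paper, which states that the corollary is obtained simply by combining Theorem~\ref{thm:orbifold p-local fibration} with Theorem~\ref{thm:anick for djk}. The extra bookkeeping you supply — absorbing the torus factor $T^{m-n}=(S^1)^{\times(m-n)}$ into $\prod\mathcal{P}$, deducing FPT from torsion-freeness of the $\ZZ[1/P]$-local homology, and noting the finite-complex hypothesis in Anick's conjecture — correctly fills in the routine details the paper leaves implicit.
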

Using the simply connected case, we can describe the homotopy groups of any smooth toric variety.
\begin{proposition}
\label{prp:smooth toric}
Let $X_\Sigma$ be a smooth toric variety of dimension $n$. Then $P'_\Sigma=\varnothing$, and the universal cover of $X_\Sigma$ is homotopy equivalent to a partial quotient $\ZK/T_\Sigma$, $T_\Sigma\simeq T^{m-n}$. In particular, there is a homotopy equivalence
$$\Omega X_\Sigma\simeq T^{m-n}\times\OZK\times N/\widehat{N}_\Sigma.$$
\end{proposition}
\begin{proof}
It is known \cite[Theorem 3.1.19]{cls} that smooth toric varieties correspond to \emph{smooth fans}, i.e. rational fans $\Sigma$ such that each $N_I$ is a direct summand in $N$. It follows that $\Tors N/N_I=0$, so $P'_\Sigma=\varnothing$ and the action $H_\Sigma\curvearrowright\ZK$ is free. Hence there is a covering map $\ZK/T_\Sigma\to\ZK/H_\Sigma\simeq X_\Sigma$. Since $\ZK/T_\Sigma$ is simply connected, this is the universal covering. The rest follows from Propositions \ref{prop:partquotentdecomp} and \ref{prp:orbifold fund group}.
\end{proof}

\subsection{Example: toric surfaces}
Algebraic topology of quasitoric orbifolds of complex dimension $2$ (homotopy classification, structure of integral cohomology ring) was studied in detail by Fu, So and Song \cite{fss1,fss2,fss3}. Our results allow for $P$-local loop space decomposition of these spaces. We restrict to the case of toric varieties, though the argument works more generally.

Let $X=X_\Sigma$ be a complete toric variety of complex dimension $2$. Then $N\cap |\Sigma|=N$, so $X$ is simply connected by Proposition \ref{prp:orbifold fund group}. By \cite[Theorem 3.4.6]{cls}, $\Sigma$ is a complete fan in $N=\ZZ^2$, hence $\K=C_m$ is a boundary of an $m$-gon. Let $a_i=(x_i,y_i)\in \ZZ^2$ be the primitive vectors on the rays of $\Sigma$, where $1\leq i\leq m$ and we identify $m+1$ with $1$. 
By Theorem \ref{thm:toricorbifoldquotient}, $X_\Sigma$ is homotopy equivalent to $\mathcal{Z}_{C_m}/H_\Sigma$, where $$H_\Sigma=\{(t_1,\dots,t_m)\in T^m:\prod_{i=1}^m t_i^{x_i}=\prod_{i=1}^mt_i^{y_i}=1\}.$$

Finally, consider the positive integers
$$m_{ij}:=|x_iy_j-x_jy_i|,~i,j\in[m],\quad d_1:=gcd(\{x_i,y_i:i\in[m]\}),$$ $$d_2:=gcd(\{m_{ij}:1\leq i<j\leq m\})/d_1.$$
\begin{proposition}
With notation as above, the following hold:
\begin{enumerate}
    \item $\pi_0(H_\Sigma)=\ZZ/d_1\oplus\ZZ/d_2$. In particular, the subgroup $H_\Sigma\subset T^m$ is connected if and only if $gcd(\{m_{ij}:1\leq i<j\leq m\})=1$.
    \item 
    Localised away from the finite set of primes 
    $$P=\bigcup_{i=1}^m\{\text{prime divisors of }|x_iy_{i+1}-x_{i+1}y_i|\},$$
    there is a homotopy equivalence $\Omega X\simeq T^{m-2}\times\prod_{n\geq 3}(\Omega S^n)^{\times D_n}$, where the numbers $D_n$ depend only on $m$ and are determined by the identity
$$\prod_{n\geq 3}(1-t^{n-1})^{D_n}=
\begin{cases}
1-t^4,&m=3;\\
(1-(m-2)t+t^2)(1+t)^{m-2},&
m\geq 4.
\end{cases}$$
\end{enumerate}
\end{proposition}
\begin{proof}
We have $N_\Sigma=span(a_1,\dots,a_m)=\Img(\ZZ^m\to \ZZ^2,~e_i\mapsto a_i)$. By \cite[Proposition 8.1]{smith}, the Smith normal form of this map has the matrix $diag(d_1,d_2)$, hence $\ZZ^2/N_\Sigma\simeq\ZZ/d_1\oplus\ZZ/d_2$, and (1) follows from Lemma \ref{lmm:pi0(H)}.

Now we prove (2). By Theorem \ref{thm:toricorbifoldquotient}, $\Omega X\simeq\Omega\mathcal{Z}_{C_m}\times T^{m-2}$ localised away from $P_\Sigma'$. By Lemma \ref{lmm:PSigma=P'Sigma}, $P_\Sigma'=P_\Sigma$. Now we check that $P_\Sigma=P$. By definition, $P_\Sigma$ is the set of prime divisors of $|N/N_I|$, $I\in\K$. Since $\K=C_m$, for $I\in\K$ we have either $I=\{i\}$ or $I=\{i,i+1\}$. In the first case, $N_I=span(a_i)$ is a direct summand of $N=\ZZ^2$, so $\Tors N/N_I=0$; in the second case, $N_I=span(a_i,a_{i+1})\subset\ZZ^2$ is a sublattice of index $|x_iy_{i+1}-x_{i+1}y_i|$, hence $|\Tors N/N_I|=|x_iy_{i+1}-x_{i+1}y_i|$. It follows that $P_\Sigma'=P$.

Finally, we describe the homotopy type of $\Omega\mathcal{Z}_{C_m}$ in terms of products on loops of spheres. For $m=3$, we have $\Omega\mathcal{Z}_{C_3}=\Omega(\partial D^6)=\Omega S^5$. Otherwise $C_m$ is a flag complex, hence the asserted loop space decomposition follows from \cite[Theorem 1.2]{vylegzhanin} and $h_{C_m}(t)=1+(m-2)t+t^2$. 
\end{proof}

\begin{remark}
A result of McGavran \cite[Theorem 3.6]{mcgavran} implies that $\mathcal{Z}_{C_m}$ is homeomorphic to a connected sum of sphere products,
$$\mathcal{Z}_{C_m}\cong \#_{k=3}^{m-2}(S^k\times S^{m-k+2})^{\# (k-2)\binom{m-2}{k-1}}.$$ The homotopy type of $\Omega \mathcal{Z}_{C_m}$ can be recovered via different methods by \cite[Example 3.1]{BT14}.
\end{remark}

The case of \emph{non-complete} toric varieties having at least one two-dimensional cone is similar.
In this case, $\K$ is a proper subcomplex of an $m$-gon (i.e. a disjoint union of path graphs), hence $\ZK$ is homotopy equivalent to a wedge of spheres (see e.g. \cite[Theorem 8.2.1]{BP15} or \cite[Theorem 4.6]{GPTW16}). If $\K$ has $m$ vertices and $e<m$ edges (i.e. $\Sigma$ has $e$ two-dimensional cones), $h_\K(t)=1+(m-2)t+(e-m+1)t^2$, so the numbers $D_n$ satisfy the identity \[\prod\limits_{n\geq 3}(1-t^{n-1})^{D_n}=(1-(m-2)t+(e-m+1)t^2)(1+t)^{m-2}.\]

Finally, if $\K$ has only one-dimensional cones, then $X_\Sigma$ is a smooth toric variety (possibly with nontrivial fundamental group). The homotopy type of $\Omega X_\Sigma$ is described in Proposition \ref{prp:smooth toric}. Moreover, the universal cover of $X_\Sigma$ can be identified with the toric morphism $X_{\Sigma'}\to X_\Sigma$ induced by the map of lattices $N_\Sigma\to N$, where $\Sigma'$ is the fan $\Sigma$ considered in the lattice $N_\Sigma$.

\section{Loop spaces of simply connected polyhedral products}
\label{sec:LoopCaa}

In this section, we reduce the study of many properties of general looped polyhedral products down to the moment-angle complex case.

\subsection*{Setup}

We first state some preliminary results we will use in our method. First, we state a loop space decomposition for polyhedral products of the form $\caa^\K$ which will be crucial to reduce the study of $\Omega \caa^\K$ to that of $\Omega \ZK$. 

\begin{theorem}[{\cite[Proposition 4.3]{PT19}}]
\label{thm:gendecompuxa}
    Let $\K$ be a simplicial complex on $[m]$ and let $(X_1,A_1),$ $\dots,$ $(X_m,A_m)$ be $CW$-pairs. For any vertex $i \in \K$, there is a homotopy equivalence \[\Omega \uxa^\K \simeq \Omega X_i \times \Omega \uxa^{\K \setminus i} \times \Omega \Sigma(F \wedge G_i),\] where $F$ is the homotopy fibre of $\uxa^{link_\K(i)} \rightarrow \uxa^{\K \setminus i}$ (with $link_\K(i)$ and $\K\setminus i$ being considered as simplicial complexes on $[m]\setminus i$), and $G_i$ is the homotopy fibre of $A_i \rightarrow X_i$. \qed 
\end{theorem}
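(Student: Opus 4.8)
The plan is to peel off the three loop-space factors one at a time, using coordinate projections together with two applications of Mather's cube theorem. Throughout, write $L=\uxa^{link_\K(i)}$ and $K'=\uxa^{\K\setminus i}$ for the polyhedral products on $[m]\setminus i$, let $g\colon L\to K'$ be the map induced by $link_\K(i)\subseteq\K\setminus i$ so that $F=\mathrm{hofib}(g)$, and let $\iota_i\colon A_i\to X_i$ so that $G_i=\mathrm{hofib}(\iota_i)$. First I would record the decomposition of a polyhedral product along the vertex $i$: since $\K=star_\K(i)\cup(\K\setminus i)$ with intersection $link_\K(i)$ and $star_\K(i)=\{i\}*link_\K(i)$, and the polyhedral product functor sends this pushout of simplicial complexes to a homotopy pushout while sending joins to products, one obtains a homotopy pushout $\uxa^\K\simeq(X_i\times L)\cup_{A_i\times L}(A_i\times K')$ whose attaching maps are $\iota_i\times\mathrm{id}_L$ and $\mathrm{id}_{A_i}\times g$ (one may take each $\iota_i$ to be a cofibration after CW-approximation, so that this strict pushout is a homotopy pushout).

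Next, the coordinate projection $\pi\colon\uxa^\K\to X_i$, which is $X_i\times L\to X_i$ on one piece and $A_i\times K'\to A_i\hookrightarrow X_i$ on the other, admits a section because $\{i\}\in\K$; hence $\Omega\uxa^\K\simeq\Omega X_i\times\Omega F'$ with $F'=\mathrm{hofib}(\pi)$. To identify $F'$, I would apply Mather's cube theorem to the cube whose bottom face is the pushout square above and whose vertical maps are the homotopy fibres of the (compatible) projections to $X_i$. Two of the side faces are homotopy pullbacks because a map over $X_i$ induces a homotopy pullback square on homotopy fibres, and the other two are homotopy pullbacks by a short direct computation; hence the top face is a homotopy pushout. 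Using $\mathrm{hofib}(X_i\times L\to X_i)\simeq L$ and $\mathrm{hofib}(A_i\times Z\xrightarrow{\iota_i\circ\mathrm{pr}}X_i)\simeq G_i\times Z$, this gives $F'\simeq\mathrm{hocolim}\bigl(L\xleftarrow{\mathrm{pr}}G_i\times L\xrightarrow{\mathrm{id}\times g}G_i\times K'\bigr)$.

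Now $F'$ retracts onto $K'$: the maps $g\colon L\to K'$ and $\mathrm{pr}\colon G_i\times K'\to K'$ agree on $G_i\times L$, so they induce $q\colon F'\to K'$, which is split by $K'\hookrightarrow G_i\times K'\hookrightarrow F'$. Therefore $\Omega F'\simeq\Omega K'\times\Omega F''$ with $F''=\mathrm{hofib}(q)$, and a second application of Mather's cube theorem, now to the pushout square defining $F'$ with vertical maps the homotopy fibres over $K'$, identifies $F''\simeq\mathrm{hocolim}\bigl(F\xleftarrow{\mathrm{pr}}G_i\times F\xrightarrow{\mathrm{pr}}G_i\bigr)=F*G_i\simeq\Sigma(F\wedge G_i)$, using $\mathrm{hofib}(g)=F$ and the standard identification of the homotopy pushout of two projections with the join. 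Assembling the three splittings yields $\Omega\uxa^\K\simeq\Omega X_i\times\Omega K'\times\Omega\Sigma(F\wedge G_i)=\Omega X_i\times\Omega\uxa^{\K\setminus i}\times\Omega\Sigma(F\wedge G_i)$.

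I expect the main work to be the bookkeeping inside the two cube-theorem arguments: verifying that the relevant side faces are genuinely homotopy pullbacks, and keeping track of the ambient vertex set ($[m]$ versus $[m]\setminus i$) over which each polyhedral product, map, and homotopy fibre is being formed. The remaining ingredients --- the vertex decomposition of $\K$, the section of $\pi$, the retraction $F'\to K'$, and $F*G_i\simeq\Sigma(F\wedge G_i)$ --- are standard.
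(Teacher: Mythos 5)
The paper states this result as a citation to \cite[Proposition 4.3]{PT19} and does not reprove it. Your argument is correct and is essentially the standard proof used in that reference: decompose $\uxa^\K$ as the homotopy pushout $(X_i\times L)\cup_{A_i\times L}(A_i\times K')$ coming from $\K = \mathrm{star}_\K(i)\cup_{\mathrm{link}_\K(i)}(\K\setminus i)$, split off $\Omega X_i$ using the retraction to the $i$-th coordinate, identify the fibre $F'$ by Mather's cube theorem, then split off $\Omega K'$ via the retraction $q\colon F'\to K'$ and apply the cube theorem a second time to identify $\mathrm{hofib}(q)\simeq F\ast G_i\simeq\Sigma(F\wedge G_i)$. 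One small remark on your write-up: all four side faces of each cube are homotopy pullbacks for the same reason (each is the square of homotopy fibres and total spaces induced by a map over the fixed base, $X_i$ in the first cube and $K'$ in the second), so the distinction you draw between ``two by the general fact, two by direct computation'' is not needed. Also, since each $A_i$ is assumed to be a $CW$-subcomplex of $X_i$, the inclusion $\iota_i$ is already a cofibration and no $CW$-approximation is required to make the strict pushout a homotopy pushout.
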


A special case of Theorem ~\ref{thm:gendecompuxa} will be used.

\begin{lemma}
    \label{lmm:replacing A with Y}
    Let $\K$ be a simplicial complex on $[m]$ and let $A_1,\dots,A_m$ be pointed $CW$-complexes. For a fixed $i \in [m]$, define $Y_i = S^1$ and $Y_j = A_j$ for all $i \neq j$. There are homotopy equivalences \[\Omega \caa^\K \simeq \Omega \caa^{\K \setminus i} \times \Omega \Sigma (F \wedge A_i),\]\[\Omega \cyy^\K \simeq \Omega \caa^{\K \setminus i} \times \Omega \Sigma (F \wedge S^1),\] where $F$ is the homotopy fibre of $\caa^{link_\K(i)} \rightarrow \caa^{\K \setminus i}$ with $link_\K(i)$ and $\K\setminus i$ being considered as simplicial complexes on the vertex set $[m]\setminus i$. In particular, $\Omega \caa^{\K \setminus i}$ and $\Omega \Sigma^2 F$ retract off $\Omega \cyy^\K$.
\end{lemma}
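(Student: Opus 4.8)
The plan is to derive Lemma~\ref{lmm:replacing A with Y} as a direct consequence of Theorem~\ref{thm:gendecompuxa}, applied twice --- once to the pair sequence $\caa$ and once to the modified sequence $\cyy$ --- and then to observe that the two decompositions share a common factor.

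\textbf{Setting up the two applications.} Fix $i\in[m]$; by the standing assumption that $\K$ has no ghost vertices, $i$ is a vertex of $\K$, so Theorem~\ref{thm:gendecompuxa} applies. For the sequence of pairs $\caa=(\underline{CA},\underline{A})$, the homotopy fibre of $CA_i\rightarrow A_i$ is contractible --- wait, that is the wrong direction; in Theorem~\ref{thm:gendecompuxa} the relevant fibre $G_i$ is the homotopy fibre of the \emph{inclusion} $A_i\hookrightarrow CA_i$ into the cone, which is homotopy equivalent to $\Omega(CA_i/A_i)\simeq\Omega\Sigma A_i$, hence $G_i\simeq\Omega\Sigma A_i$. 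However, for the purpose of this lemma we do not need to identify $G_i$ explicitly: Theorem~\ref{thm:gendecompuxa} directly gives
\[
\Omega\caa^\K\simeq \Omega CA_i\times\Omega\caa^{\K\setminus i}\times\Omega\Sigma(F\wedge G_i),
\]
where $F$ is the homotopy fibre of $\caa^{link_\K(i)}\rightarrow\caa^{\K\setminus i}$ and $G_i$ is the homotopy fibre of $A_i\rightarrow CA_i$. Since $CA_i$ is contractible, $\Omega CA_i\simeq\ast$ and $G_i\simeq\Omega\Sigma A_i$, and there is a standard homotopy equivalence $\Omega\Sigma(F\wedge\Omega\Sigma A_i)\simeq\Omega\Sigma(F\wedge A_i)$ coming from the fact that, after looping a suspension, the $\Omega\Sigma$ on the inner smash factor can be absorbed (concretely, $F\wedge\Omega\Sigma A_i$ is a retract of $F\wedge\Omega\Sigma A_i$ and one uses the James splitting together with the $\Omega\Sigma$-absorption used in the proof of Theorem~\ref{thm:gendecompuxa} in~\cite{PT19}); this yields the first claimed equivalence. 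The cleaner route, which I would actually write, is to note that~\cite[Proposition 4.3]{PT19} is typically stated so that the factor is already $\Omega\Sigma(F\wedge A_i)$ when the first space of the pair is a cone --- so I would simply cite the relevant normalisation. Applying the same to the sequence $\cyy=(\underline{CY},\underline{Y})$, whose $j$-th pair for $j\neq i$ is again $(CA_j,A_j)$ and whose $i$-th pair is $(CS^1,S^1)=(D^2,S^1)$, gives
\[
\Omega\cyy^\K\simeq\Omega\caa^{\K\setminus i}\times\Omega\Sigma(F'\wedge S^1),
\]
where $F'$ is the homotopy fibre of $\cyy^{link_\K(i)}\rightarrow\cyy^{\K\setminus i}$.

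\textbf{Identifying the fibres.} The key point is that $F=F'$. Both $link_\K(i)$ and $\K\setminus i$ are simplicial complexes on $[m]\setminus i$, and on this index set the pairs $(CY_j,Y_j)$ and $(CA_j,A_j)$ agree for every $j\neq i$ --- the modification only changed the $i$-th pair, which is invisible to polyhedral products indexed by complexes on $[m]\setminus i$. Hence $\cyy^{link_\K(i)}=\caa^{link_\K(i)}$ and $\cyy^{\K\setminus i}=\caa^{\K\setminus i}$ literally (not just up to homotopy), and the two maps whose fibres define $F$ and $F'$ are the same map, so $F\simeq F'$. Substituting into the second equivalence gives $\Omega\cyy^\K\simeq\Omega\caa^{\K\setminus i}\times\Omega\Sigma(F\wedge S^1)$, which is the second claimed equivalence. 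Finally, $F\wedge S^1=\Sigma F$, so $\Omega\Sigma(F\wedge S^1)\simeq\Omega\Sigma^2 F$, and the product decomposition $\Omega\cyy^\K\simeq\Omega\caa^{\K\setminus i}\times\Omega\Sigma^2 F$ exhibits both $\Omega\caa^{\K\setminus i}$ and $\Omega\Sigma^2 F$ as retracts (indeed direct factors) of $\Omega\cyy^\K$, which is the last assertion.

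\textbf{Main obstacle.} The only genuine subtlety is the normalisation of the $\Omega\Sigma(F\wedge A_i)$ factor: Theorem~\ref{thm:gendecompuxa} as quoted produces $\Omega\Sigma(F\wedge G_i)$ with $G_i$ the fibre of $A_i\rightarrow X_i$, and when $X_i=CA_i$ one must know that $\Omega\Sigma(F\wedge\Omega\Sigma A_i)\simeq\Omega\Sigma(F\wedge A_i)$. This is where I would be careful: it follows from the retraction $A_i\hookrightarrow\Omega\Sigma A_i$ (splitting off the bottom James filtration piece) together with the fact, used in~\cite{PT19}, that $\Omega\Sigma(F\wedge\Omega\Sigma A_i)$ splits as a product over the James filtration of $\Omega\Sigma A_i$ smashed with $F$, and reassembling these pieces recovers $\Omega\Sigma(F\wedge A_i)$ up to homotopy. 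Everything else --- that cones loop to a point, that the $i$-th pair is invisible to $\K\setminus i$ and $link_\K(i)$, that $F\wedge S^1=\Sigma F$, and that factors of a product are retracts --- is immediate.
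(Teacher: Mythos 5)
Your overall strategy — apply Theorem~\ref{thm:gendecompuxa} twice, once to $\caa$ and once to $\cyy$, and observe that the maps defining $F$ and $F'$ are literally identical since the $i$-th pair is invisible to polyhedral products over $\K\setminus i$ and $link_\K(i)$ — is exactly the paper's proof, and the $F=F'$ observation and the $F\wedge S^1=\Sigma F$ reduction are both correct.

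However, there is a genuine error in the only nontrivial step: your identification of $G_i$. You claim the homotopy fibre of $A_i\hookrightarrow CA_i$ is $\Omega(CA_i/A_i)\simeq\Omega\Sigma A_i$. This is false. The cone $CA_i$ is contractible, and the homotopy fibre of any map $X\to\ast$ is $X$ itself, so $G_i\simeq A_i$, not $\Omega\Sigma A_i$. (You appear to be conflating the homotopy fibre of a cofibration $A\hookrightarrow X$ with the loop space of its cofibre $X/A$; these are related by a comparison map but are not equivalent, and here the relevant computation is simply that the fibre over a contractible base is the total space.) With the correct identification $G_i\simeq A_i$, the factor $\Omega\Sigma(F\wedge G_i)$ is already $\Omega\Sigma(F\wedge A_i)$ on the nose, and no further manipulation is needed.

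This error compounds: the equivalence $\Omega\Sigma(F\wedge\Omega\Sigma A_i)\simeq\Omega\Sigma(F\wedge A_i)$ that you try to justify via the James splitting is not true in general. For instance with $F=S^0$ and $A_i=S^1$ the left side is $\Omega\Sigma\Omega S^2\simeq\Omega\bigl(\bigvee_{k\geq 1}S^{k+1}\bigr)$, which has much larger rational homology than $\Omega S^2$, the right side. So the "subtlety" you flag as the main obstacle is an artefact of the wrong computation of $G_i$; the actual argument has no such obstacle.
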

\begin{proof}
    By definition of each $Y_j$, $\caa^{\K \setminus i} = \cyy^{\K \setminus i}$, and $\caa^{link_\K(i)} = \cyy^{link_\K(i)}$. The result follows by applying Theorem ~\ref{thm:gendecompuxa} to $\caa^\K$ and $\cyy^{\K}$.
\end{proof}

We will require a technical result on the homotopy fibre $F$ appearing in Lemma ~\ref{lmm:replacing A with Y}.
\begin{lemma}
    \label{lem:FhasZ}
    Let $\K$ be a simplicial complex on $[m]$. Suppose there exists a vertex $i$ such that $K \neq (K \setminus i) * i$, and denote by $F$ the homotopy fibre of $\caa^{link_\K(i)} \rightarrow \caa^{\K \setminus i}$. If each $\widetilde{H}^*(A_j)$ contains a $\mathbb{Z}$-summand, then $\widetilde{H}^*(F)$ contains a $\mathbb{Z}$-summand.
\end{lemma}
\begin{proof}

    Since $K \neq (K \setminus i) * i$, $link_\K(i) \neq K \setminus i$. If $link_\K(i) = \varnothing$, then by definition of $\caa^{link_{\K}(i)}$, $F$ is the homotopy fibre of $\prod_{i=1}^{m-1} A_i \rightarrow \caa^{\K \setminus i}$. By \cite[Corollary 3.3]{GT13}, this map is null homotopic, and so $F \simeq \prod_{i=1}^m A_i \times \Omega \caa^{\K \setminus i}$. The result then follows from the K\"unneth theorem.
    
    Now suppose $link_\K(i) \neq \varnothing$. Let $\sigma = \{i_1,\cdots,i_k\}$ be a minimal missing face of $link_\K(i)$ such that $\sigma$ is a face of $K \setminus i$. By definition, $\partial \sigma$ is a full subcomplex of $link_{\K}(i)$, and so by \cite[Lemma 2.2.3]{DS07}, the map $g:\caa^{\partial \sigma} \rightarrow \caa^{link_\K(i)}$ has a left homotopy inverse (here $\partial\sigma$ is considered as a simplicial complex on the vertex set $\sigma$). The construction of the polyhedral product implies there is a commutative diagram \[\begin{tikzcd}
	{\caa^{\partial \sigma}} & {\caa^{\sigma}} \\
	{\caa^{link_\K(i)}} & {\caa^{\K \setminus i}.}
	\arrow[from=1-1, to=1-2]
	\arrow["g", from=1-1, to=2-1]
	\arrow[from=1-2, to=2-2]
	\arrow["f", from=2-1, to=2-2]
\end{tikzcd}\] By definition of the polyhedral product, $\caa^{\sigma}$ is contractible, and so the composite $f \circ g$ is null homotopic. Hence, there is a map $\lambda:\caa^{\partial \sigma} \rightarrow F$ satisfying the homotopy commutative diagram \[\begin{tikzcd}
	& F \\
	{\caa^{\partial \sigma}} & {\caa^{link_\K(i)}.}
	\arrow[from=1-2, to=2-2]
	\arrow["\lambda", from=2-1, to=1-2]
	\arrow["g", from=2-1, to=2-2]
\end{tikzcd}\] The left homotopy inverse for $g$ implies $\lambda$ has a left homotopy inverse. By \cite[Proposition 2.3]{GT13}, there is a homotopy equivalence  $\caa^{\partial \sigma} \simeq \Sigma^{k-1}(A_{i_1} \wedge \cdots \wedge A_{i_k})$. Since each $\widetilde{H}^*(A_i)$ contains a $\mathbb{Z}$-summand, the reduced K\"unneth theorem implies $\H_*(\caa^{\partial\sigma})$ contains a $\mathbb{Z}$-summand. Hence, the left homotopy inverse for $\lambda$ implies $\widetilde{H}^*(F)$ contains a $\mathbb{Z}$-summand.
\end{proof}

Next, we state a homotopy fibration which splits after looping. This reduces the study of $\Omega \uxa^\K$ to polyhedral products of the form $\Omega \cgg^\K$.

\begin{theorem}[{\cite[Theorem 2.1]{HST19}}]
    \label{thm:uxasplitfib}
    Let $\K$ be a simplicial complex on $[m]$ and let $(X_1,A_1),$ $\dots,$ $(X_m,A_m)$ be $CW$-pairs. Denote by $G_i$ the homotopy fibre of $A_i \rightarrow X_i$ for all $i$. There is a homotopy fibration \[\cgg^\K \rightarrow \uxa^\K \rightarrow \prod\limits_{i=1}^m \Omega X_i\] which splits after looping, giving a homotopy equivalence \[\Omega \uxa^\K \simeq \prod\limits_{i=1}^m \Omega X_i \times \Omega \cgg^\K.\]
    In particular,
    \[
    \Omega\ux^\K\simeq\prod_{i=1}^m\Omega X_i\times \Omega\clxx^\K,\quad
    \Omega\DJ(\K)\simeq (S^1)^m\times\OZK.\eqno\qed 
    \]
\end{theorem}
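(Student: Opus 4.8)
The plan is to realise the fibration as the inclusion $\iota\colon\uxa^\K\hookrightarrow\uxa^{\Delta^{m-1}}=\prod_{i=1}^m X_i$ (induced by the maps of pairs $(X_i,A_i)\to(X_i,X_i)$, i.e.\ identity on $X_i$ and inclusion on $A_i$), to identify its homotopy fibre with $\cgg^\K$, and to split the looped fibration using coordinate subspaces of $\uxa^\K$. I would reduce the statement to two assertions: (i) the homotopy fibre of $\iota$ is $\cgg^\K$; and (ii) the looped map $\Omega\iota\colon\Omega\uxa^\K\to\prod_{i=1}^m\Omega X_i$ admits a homotopy section. Given these the splitting is formal: if $\Omega F\to\Omega E\xrightarrow{\Omega p}\Omega B$ is a homotopy fibration of loop spaces and $\Omega p$ has a section, then the map $\Omega F\times\Omega B\to\Omega E$ that is the loop product of the fibre inclusion on the first factor and the section on the second covers $\mathrm{id}_{\Omega B}$ and restricts to the fibre inclusion on fibres, hence is a homotopy equivalence; this gives $\Omega\uxa^\K\simeq\prod_{i=1}^m\Omega X_i\times\Omega\cgg^\K$. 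The two ``in particular'' statements are then specialisations: taking $A_i=\ast$ gives $G_i=\Omega X_i$ and $\cgg^\K=\clxx^\K$, and taking $(X_i,A_i)=(\CC P^\infty,\ast)$ gives $\Omega X_i\simeq S^1$, $G_i\simeq S^1$ and $\cgg^\K\simeq(\underline{D^2},\underline{S^1})^\K=\ZK$.

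For (i) I would use the presentation of a polyhedral product as a homotopy colimit over the face poset of $\K$, namely $\uxa^\K\simeq\operatorname{hocolim}_{\sigma\in\K}D(\sigma)$ with $D(\sigma)=\prod_{i\in\sigma}X_i\times\prod_{i\notin\sigma}A_i$, where $\iota$ is the map of homotopy colimits induced by the natural transformation $D(\sigma)\to\prod_i X_i$. The key point is that taking homotopy fibres over the path-connected space $\prod_i X_i$ commutes with this homotopy colimit — a consequence of Mather's cube theorems for homotopy pushouts — so $\operatorname{hofib}(\iota)\simeq\operatorname{hocolim}_\sigma\operatorname{hofib}\!\bigl(D(\sigma)\to\prod_i X_i\bigr)$. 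Since $D(\sigma)\to\prod_i X_i$ is the identity on the factors with $i\in\sigma$ and is $A_i\hookrightarrow X_i$ on the factors with $i\notin\sigma$, its homotopy fibre is $\prod_{i\notin\sigma}G_i$; as each $CG_i$ is contractible this is canonically homotopy equivalent to $\prod_{i\in\sigma}CG_i\times\prod_{i\notin\sigma}G_i$, which is exactly the $\sigma$-term of the homotopy-colimit diagram computing $\cgg^\K$, and one checks that these equivalences are compatible with the face maps (each becomes the projection forgetting the coordinates in $\tau\setminus\sigma$). Therefore $\operatorname{hofib}(\iota)\simeq\operatorname{hocolim}_\sigma\bigl(\prod_{i\in\sigma}CG_i\times\prod_{i\notin\sigma}G_i\bigr)=\cgg^\K$.

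For (ii), because $\K$ has no ghost vertices we have $\{i\}\in\K$ for each $i$, so restricting the $\{i\}$-piece $X_i\times\prod_{j\neq i}A_j$ of $\uxa^\K$ to basepoints in the remaining coordinates yields a coordinate inclusion $\iota_i\colon X_i\hookrightarrow\uxa^\K$; I would then let $s$ send $(\omega_1,\dots,\omega_m)$ to the loop product $\Omega\iota_1(\omega_1)\cdots\Omega\iota_m(\omega_m)$ in $\Omega\uxa^\K$. Since $\Omega\iota$ is an $H$-map and each composite $\iota\circ\iota_i\colon X_i\hookrightarrow\prod_j X_j$ is the inclusion of the $i$-th coordinate, $\Omega\iota\circ s$ carries $(\omega_i)_i$ to the product in $\prod_j\Omega X_j$ of the loops $(\omega_1,\ast,\dots,\ast),\dots,(\ast,\dots,\ast,\omega_m)$, i.e.\ to $(\omega_1,\dots,\omega_m)$; hence $s$ is a section of $\Omega\iota$.

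The main obstacle is the interchange of homotopy fibres with the polyhedral-product homotopy colimit in (i): one must work with a cofibrant model for the homotopy colimit (e.g.\ the usual fattened polyhedral product), use path-connectedness of $\prod_i X_i$ so that the based homotopy fibres glue correctly, and then apply Mather's cube theorem along the filtration of $\K$ by subcomplexes, all while keeping track of the naturality in $\sigma$ of the equivalences $\prod_{i\notin\sigma}G_i\simeq\prod_{i\in\sigma}CG_i\times\prod_{i\notin\sigma}G_i$. Everything else — the explicit section and the splitting of a sectioned loop-space fibration — is routine. One could instead try to argue by induction on $m$ using the decomposition $\Omega\uxa^\K\simeq\Omega X_i\times\Omega\uxa^{\K\setminus i}\times\Omega\Sigma(F\wedge G_i)$ of Theorem~\ref{thm:gendecompuxa}, but this merely relocates the difficulty, since it then requires comparing $F=\operatorname{hofib}(\uxa^{\mathrm{link}_\K(i)}\to\uxa^{\K\setminus i})$ with the corresponding homotopy fibre for $\cgg^\K$, a fibre comparison of the very same kind.
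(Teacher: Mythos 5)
Your proposal is correct and follows essentially the same route as the proof of the cited result \cite[Theorem 2.1]{HST19}: identify the fibre of the inclusion $\uxa^\K \hookrightarrow \prod_i X_i$ with $\cgg^\K$ via the homotopy-colimit description of polyhedral products (this fibration goes back to work of Denham--Suciu and Panov--Ray), and split the looped fibration using the section built from the coordinate inclusions $X_i \hookrightarrow \uxa^\K$ coming from the vertices $\{i\}\in\K$. The paper itself does not reprove this theorem but simply cites it, and you have correctly isolated the genuinely technical point (commuting homotopy fibres with the polyhedral-product homotopy colimit over the connected base, compatibly with face maps) while treating the section and the splitting as the routine steps they are.
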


\subsection*{Torsion and the Steenrod Algebra}
The first properties we consider are torsion in the loop homology of polyhedral products, and the action of the Steenrod algebra on the loop cohomology. Recall that, for a space $X$, the set of primes $p$ appearing as $p$-torsion in $H_*(X;\ZZ)$ is denoted by $\tau(X)$.  A space $X$ has FPT if $\tau(X)$ is a finite set. For a space $X$, let $s(X)$ be the set of primes $p$ such that the mod-$p$ Steenrod algebra acts non-trivially on $H^*(X;\mathbb{Z}/p\mathbb{Z})$. We first prove a preliminary result about the torsion and action of the Steenrod algebra on the product and smash product of spaces.

\begin{lemma}
    \label{lem:actiononproduct} Let $X$ and $Y$ be spaces. Then \begin{enumerate}
        \item $\tau(X \times Y) = \tau(X) \cup \tau(Y)$,
        \item $\tau(X \wedge Y) \subseteq \tau(X) \cup \tau(Y)$,
        \item $\tau(\Omega \Sigma X) = \tau(X)$,
        \item $s(X \times Y) = s(X) \cup s(Y)$,
        \item $s(X \wedge Y) \subseteq s(X) \cup s(Y)$.
        \item $s(\Omega  \Sigma X) = s(X)$.
    \end{enumerate}
    If both $X$ and $Y$ contains a $\mathbb{Z}$-summand in reduced integral cohomology, then (2) and (4) are equalities.
    \end{lemma}
\begin{proof}
    The results for $(1)$ and $(2)$ follow from the K\"unneth theorem and $(3)$ and $(6)$ follow from the James splitting $\Sigma \Omega \Sigma X \simeq \bigvee_{k \geq 1} \Sigma X^{\wedge k}$.

    We prove $(4)$ and $(5)$ follows from the same proof. Let $p \in s(X \times Y)$. Then either some reduced power $\mathcal{P}^i$ or the mod-$p$ Bockstein map $\beta$ acts non-trivially on $H^*(X\times Y;\ZZ/p\ZZ)$. In the first case, let $i\geq 1$ be the least integer such that $\mathcal{P}^i(z)$ is non-trivial for some $z \in H^*(X \times Y;\mathbb{Z}/p\mathbb{Z})$. The K\"unneth theorem implies that $z = \sum_j a_j(x_j \otimes y_j)$. By the Cartan formula and the fact that $i$ is the smallest integer such that $\mathcal{P}^i$ is non-trivial, we obtain \[\mathcal{P}^i(z) = \sum_j a_j(\mathcal{P}^i(x_j) \otimes y_j + x_j \otimes \mathcal{P}^i(y_j)) \neq 0.\] Hence, there exists $\mathcal{P}^i(x_j) \neq 0$ or $\mathcal{P}^i(y_j) \neq 0$, and so $s(X \times Y) \subseteq s(X) \cup s(Y)$. The same argument holds for the mod-$p$ Bockstein since $\beta$ is a derivation.

    Now let $p \in s(X) \cup s(Y)$. Let $i\geq 1$ be the least integer such that $\mathcal{P}^i(x) \neq 0$ for some element $x \in H^*(X;\mathbb{Z}/p\mathbb{Z})$ or $\mathcal{P}^i(y) \neq 0$ for some $y \in H^*(Y;\mathbb{Z}/p\mathbb{Z})$. Suppose $\mathcal{P}^i(x) \neq 0$ for some $x \in H^*(X;\mathbb{Z}/p\mathbb{Z})$, and pick $y \neq 0 \in \widetilde{H}^*(Y;\mathbb{Z}/p\mathbb{Z})$. Note that in the smash product case, $y$ exists if $\widetilde{H}^*(Y;\mathbb{Z})$ contains a $\mathbb{Z}$ summand. The Cartan formula and choice of $i$ implies \[\mathcal{P}^i(x \otimes y) = \mathcal{P}^i(x) \otimes y + x \otimes \mathcal{P}^i(y) \neq 0.\] Hence, $s(X) \cup s(Y) \subseteq s(X \times Y)$, and the result follows.
\end{proof}
\begin{theorem}
\label{thm:caaFPT}
    Let $\K$ be a simplicial complex on $[m]$. Then $$\tau(\Omega \caa^\K) \subseteq \tau(\Omega \ZK) \cup \tau(A_1) \cup \dots \cup \tau(A_m),$$ and \[s(\Omega \caa^\K) \subseteq s(\Omega \ZK) \cup s(A_1) \cup \dots \cup s(A_m),\] with equality if $\widetilde{H}^*(A_v)$ contains a $\mathbb{Z}$-summand and $\K \neq (K \setminus v) *v$ for any vertex $v$. In particular, if each $A_i$ has FPT then $\Omega \caa^\K$ has FPT.
\end{theorem}

\begin{proof}
     If $\K = \Delta^{m-1}$, then $\caa^\K$ is contractible, so assume $\K \neq\Delta^{m-1}$. We prove the case of $\tau(\Omega \caa^{\K})$, and the proof for $s(\Omega \caa^{\K})$ is similar.
 
    We proceed by induction on $l$, where $l$ is the number of $A_i$'s which are not homotopy equivalent to $S^1$. If $l=0$, each $A_i \simeq S^1$. Therefore, $\caa^\K \simeq \ZK$ and the result holds trivially with equality.
    
     Suppose the result is true for $l-1$ and consider the case $l \geq 1$. Pick $i$ such that $A_i \not \simeq S^1$. Let $Y_i = S^1$ and $Y_j = A_j$ for all $j \neq i$.

     Let $S = \tau(\OZK) \cup \tau(Y_1) \cup \cdots \tau(Y_m)$, and $S' = \tau(\OZK) \cup \tau(A_1)\cup \cdots \cup \tau(A_m)$. Observe that since $Y_i = S^1$, $S' = S \cup \tau(A_i)$. By the inductive hypothesis, $\tau(\Omega\cyy^\K)\subseteq S$.

     On the other hand, by Lemma~\ref{lmm:replacing A with Y} and Lemma~\ref{lem:actiononproduct}
     \begin{gather*}
        \tau(\Omega\cyy^\K)=\tau(\Omega\caa^{\K\setminus i})\cup\tau(\Omega\Sigma^2F)=\tau(\Omega\caa^{\K\setminus i})\cup\tau(F),\\
        \tau(\Omega\caa^\K)=\tau(\Omega\caa^{\K\setminus i})\cup\tau(\Omega\Sigma(F\wedge A_i))=\tau(\Omega\caa^{\K\setminus i})\cup\tau(F\wedge A_i)\\\subseteq \Big(\tau(\Omega\caa^{\K\setminus i})\cup\tau(F)\Big)\cup \tau(A_i) = \tau(\Omega\cyy^\K)\cup\tau(A_i).
     \end{gather*}
    Therefore, $\tau(\Omega\caa^\K)\subseteq S\cup \tau(A_i)=S'$, as required.

    Now suppose that each $\H_*(A_v)$ contains a $\ZZ$-summand and $\K\neq(\K\setminus v)\ast v$ for any vertex $v$. Then $\tau(\Omega\cyy^\K)=S$ by the inductive hypothesis, and Lemma \ref{lem:FhasZ} implies that $\H_*(F)$ has a $\ZZ$-summand. By Lemma~\ref{lem:actiononproduct}, $\tau(F\wedge A_i)=\tau(F)\cup\tau(A_i)$. Hence all the inclusions above become equalities, and in this case $$\tau(\Omega\caa^\K)=\tau(\Omega\cyy^\K)\cup\tau(A_i)=S\cup\tau(A_i)=S'.$$
          
     Since $\OZK$ has FPT by Theorem ~\ref{thm:djk has fpt}, if each $A_i$ has FPT, the set $S'$ is finite. Hence, $\Omega \caa^\K$ has FPT.
\end{proof}

\begin{remark}
    The converse of Theorem ~\ref{thm:caaFPT} is not true. Let $S=\{p_s\}$ and $T=\{q_t\}$ be infinite, disjoint sets of odd primes. Let $\K$ be two disjoint points, $A_1 = \bigvee_{s \in S} P^{p_s}(p_s)$, and $A_2 = \bigvee_{t \in T} P^{p_t}(p_t)$. By definition, each $A_i$ does not have FPT, but $\caa^\K\simeq \Sigma A_1\wedge A_2$ is contractible. 
\end{remark}

Using Theorem ~\ref{thm:uxasplitfib}, we can give sufficient conditions for general looped polyhedral products of the form $\Omega \uxa^\K$ to have FPT.

\begin{theorem}
\label{thm:uxaFPT}
        Let $\K$ be a simplicial complex on $[m]$ and let $(X_1,A_1),\dots,(X_m,A_m)$ be $CW$-pairs. Denote by $G_i$ the homotopy fibre of $A_i \rightarrow X_i$ for all $i$. Then \[\tau(\Omega \uxa^\K) \subseteq \tau(\OZK) \cup\tau(\Omega X_1) \cup \dots \cup \tau(\Omega X_m) \cup \tau(G_1) \cup \dots \cup \tau(G_m),\] and \[s(\Omega \uxa^\K) \subseteq s(\OZK) \cup s(\Omega X_1) \cup \dots \cup s(\Omega X_m) \cup s(G_1) \cup \dots \cup s(G_m)\] with equality if each $\widetilde{H}^*(G_i)$ contains a $\mathbb{Z}$-summand, and $\K \neq (\K \setminus v) * v$ for any vertex $v$. In particular, if each $\Omega X_i$ and each $G_i$ has FPT, then $\Omega \uxa^\K$ has FPT.
\end{theorem}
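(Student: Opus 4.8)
The plan is to obtain the bound as a formal consequence of two structural results already in hand: the looped splitting of Theorem~\ref{thm:uxasplitfib} and the torsion estimate of Theorem~\ref{thm:caaFPT}. First I would apply Theorem~\ref{thm:uxasplitfib}, which provides a homotopy equivalence $\Omega \uxa^\K \simeq \prod_{i=1}^m \Omega X_i \times \Omega \cgg^\K$, where $G_i$ is the homotopy fibre of $A_i \to X_i$. For any product $U \times V$ one has $\tau(U \times V) \subseteq \tau(U) \cup \tau(V)$: in the Künneth decomposition of $H_*(U \times V;\ZZ)$ neither a tensor summand $H_i(U) \otimes H_j(V)$ nor a $\Tor$ summand $\Tor(H_i(U), H_j(V))$ can carry $p$-torsion when both factors are $p$-torsion free. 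Applying this to the splitting gives
\[\tau(\Omega \uxa^\K) \subseteq \tau(\Omega X_1) \cup \dots \cup \tau(\Omega X_m) \cup \tau(\Omega \cgg^\K).\]

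Next I would invoke Theorem~\ref{thm:caaFPT} for the polyhedral product $\cgg^\K$, in which the fibres $G_i$ (with the cones $CG_i$ as ambient spaces) take the place of the $A_i$ appearing there; since that theorem imposes no conditions on the subspaces beyond being pointed $CW$-complexes, it applies directly and yields
\[\tau(\Omega \cgg^\K) \subseteq \tau(\OZK) \cup \tau(G_1) \cup \dots \cup \tau(G_m).\]
Substituting this into the previous inclusion produces exactly the claimed bound
\[\tau(\Omega \uxa^\K) \subseteq \tau(\OZK) \cup \tau(\Omega X_1) \cup \dots \cup \tau(\Omega X_m) \cup \tau(G_1) \cup \dots \cup \tau(G_m).\]

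Finally, for the ``in particular'' clause I would note that $\tau(\OZK)$ is finite by Theorem~\ref{thm:djk has fpt}; hence if each $\Omega X_i$ and each $G_i$ additionally has FPT, the displayed right-hand side is a finite union of finite sets, so $\tau(\Omega \uxa^\K)$ is finite and $\Omega \uxa^\K$ has FPT. I do not expect any genuine obstacle here — the argument is essentially a two-line reduction — and the only points needing care are the two elementary verifications used implicitly: that the torsion primes of a product lie in the union of those of its factors, and that Theorem~\ref{thm:caaFPT} is indeed applicable with the fibres $G_i$ playing the role of the $A_i$.
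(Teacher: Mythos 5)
Your proof is correct and matches the paper's argument essentially line by line: both apply the looped splitting of Theorem~\ref{thm:uxasplitfib}, then Theorem~\ref{thm:caaFPT} with the $G_i$ in the role of the $A_i$, and combine via the K\"unneth theorem and Theorem~\ref{thm:djk has fpt}. The only difference is that you spell out the elementary fact $\tau(U\times V)\subseteq\tau(U)\cup\tau(V)$, which the paper leaves implicit.
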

\begin{proof}
    We prove the $\tau(\Omega \uxa^\K)$ case, and the $s(\Omega \uxa^\K)$ case follows from the same proof. By Theorem ~\ref{thm:uxasplitfib}, there is a homotopy equivalence \[\Omega \uxa^\K \simeq \prod\limits_{i=1}^m \Omega X_i \times \Omega \cgg^\K,\] and so Lemma ~\ref{lem:actiononproduct} implies \[\tau(\Omega \uxa^\K) = \tau(\Omega X_1) \cup \cdots \cup \tau(\Omega X_m) \cup \tau(\Omega \cgg^{\K}).\] By Theorem ~\ref{thm:caaFPT}, $\tau(\Omega \cgg^\K) \subseteq \tau(\OZK) \cup \tau(G_1) \cup \dots \cup \tau(G_m)$ with equality under the listed assumptions. Theorem ~\ref{thm:djk has fpt} implies $\tau(\OZK)$ is finite, and so if each $\tau(\Omega X_i)$ and each $\tau(G_i)$ is finite, then $\tau(\Omega\uxa^\K)$ is also finite.
\end{proof}

\begin{example}
\label{ex:torsionandSteenrod}
    Let $\K$ be a flag complex. By \cite[Corollary 7.3]{PT19}, $\Omega \mathcal{Z}_{\K}$ decomposes as a product of spheres and loops on spheres, and so $\tau(\OZK) = s(\Omega \mathcal{Z}_{\K}) = \varnothing$. Theorem ~\ref{thm:uxaFPT} implies that $\tau(\Omega \uxa^\K)$ and $s(\Omega \uxa^{\K})$ depends only on the ingredient spaces in this case.

    Now let $\K$ be the $6$-vertex triangulation of $\mathbb{R}P^2$. By \cite[Example 3.3]{GPTW16}, there is a homotopy equivalence \[\mathcal{Z}_{\K} \simeq W \vee \Sigma^7 \mathbb{R}P^2,\] where $W$ is a wedge of spheres. The James splitting implies that $\tau(\OZK) = s(\OZK)= \{2\}$. Therefore, Theorem ~\ref{thm:uxaFPT} implies that $H_*(\Omega \uxa^\K)$ contains $2$-torsion, and the mod-$2$ Steenrod algebra acts non-trivially on $H^*(\Omega \uxa^\K;\mathbb{Z}/2\mathbb{Z})$ if each $G_i$ has a $\mathbb{Z}$-summand in cohomology. An example of such a pair is $(X_i,A_i) = (\mathbb{C}P^{n_i},\mathbb{C}P^{m_i})$ with  $1\leq m_i <n_i\leq \infty$ (see \cite[Corollary 4.8]{St24a}).
\end{example}




\subsection*{Loop space decompositions}
We next prove certain coarse decompositions of polyhedral products of the form $\Omega \caa^\K$. To do this, we require the following result.

\begin{lemma}
\label{lem:susimpliesMoore}
    Let $n\geq 1$ and $X$ be a space such that $\Sigma^n X \in \bigvee(\mathcal{W} \cup \mathcal{M}_0)$, then for any prime $p$ and $r \geq 1$ such that $p^r \neq 2$, $P^{n+2}(p^r) \wedge X \in \bigvee\mathcal{M}_0$.
\end{lemma}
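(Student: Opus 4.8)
The plan is to exploit the hypothesis that $\Sigma^n X$ decomposes as a wedge of spheres and Moore spaces, smash with $P^{n+2}(p^r)$, and then identify the resulting wedge summands. The key observation is that smashing with a Moore space interacts well with wedge decompositions: smash distributes over wedges, so it suffices to analyse $P^{n+2}(p^r)\wedge Y$ when $Y$ is a single sphere or a single Moore space, and then reassemble.

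First I would reduce dimension-shifting via the relation $\Sigma^n X\in\bigvee(\mathcal{W}\cup\mathcal{M}_0)$. Write $P^{n+2}(p^r)=\Sigma^n P^{2}(p^r)$ (with the convention that $P^2(p^r)$ is the two-cell complex $S^1\cup_{p^r}e^2$; note $P^{n+2}(p^r)\simeq\Sigma^n(S^1\cup_{p^r}e^2)$). Using the standard homotopy equivalence $\Sigma A\wedge\Sigma B\simeq\Sigma(A\wedge\Sigma B)$ repeatedly, one gets
$$P^{n+2}(p^r)\wedge X\;\simeq\;\Sigma^{n}\bigl(P^{2}(p^r)\wedge X\bigr)\;\simeq\;P^{2}(p^r)\wedge\Sigma^{n}X,$$
so we are reduced to computing $P^2(p^r)\wedge W$ for $W\in\bigvee(\mathcal{W}\cup\mathcal{M}_0)$. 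Since smashing commutes with (finite-type) wedges, $P^2(p^r)\wedge W$ is a wedge of the terms $P^2(p^r)\wedge S^k$ and $P^2(p^r)\wedge P^{\ell}(q^s)$.

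Next I would identify these smash factors. For a sphere, $P^2(p^r)\wedge S^k\simeq P^{k+2}(p^r)$, a Moore space of the same prime power $p^r$, which lies in $\mathcal{M}_0$ as long as $p^r\neq 2$ — which is exactly our standing hypothesis. For two Moore spaces, the smash $P^{\ell}(q^s)\wedge P^{2}(p^r)$ has a well-known homotopy type: it splits as a wedge of two Moore spaces $P^{\ell+1}(d)\vee P^{\ell+2}(d)$ where $d=\gcd(p^r,q^s)$ (this follows from the cofibre sequence $S^{\ell-1}\xrightarrow{q^s}S^{\ell-1}\to P^{\ell}(q^s)$ smashed with $P^2(p^r)$, together with the computation $[S^{\ell},P^{\ell+1}(p^r)]\cong\mathbb{Z}/\gcd(p^r,\cdot)$ of the attaching map). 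If $d=1$ this wedge is contractible and contributes nothing; if $d>1$, then $d\mid p^r$ and $d\neq 2$ forces $d$ itself to be an allowable prime power, so both factors lie in $\mathcal{M}_0$. In every case each summand is either contractible or a Moore space in $\mathcal{M}_0$, hence $P^{n+2}(p^r)\wedge X\in\bigvee\mathcal{M}_0$.

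I expect the main obstacle to be the precise bookkeeping in the second smash computation — verifying that the homotopy type of $P^{\ell}(q^s)\wedge P^2(p^r)$ is as claimed, in particular pinning down the order of the attaching map, and checking that when $p^r\ne 2$ the gcd $d$ cannot equal $2$ unless... (it can: e.g. $p^r=4$, $q^s=2$ gives $d=2$) — so one must be slightly careful and instead argue that $d\mid p^r$ with $p^r\ne 2$, so $d\in\{1\}\cup\{$prime powers $\ne 2$ except possibly $d=2$ when $4\mid p^r\}$; in the borderline case $d=2$ one still has $p=2$, $r\ge 2$, and the relevant summands $P^{\ell+1}(2)$, $P^{\ell+2}(2)$ are $2$-Moore spaces which are \emph{not} in $\mathcal{M}_0$. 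To handle this cleanly I would instead observe that the hypothesis $\Sigma^n X\in\bigvee(\mathcal{W}\cup\mathcal{M}_0)$ already excludes $P^\bullet(2)$ summands, so every Moore-space factor of $\Sigma^n X$ has $q^s\ne 2$; then smashing with $P^2(p^r)$, $p^r\ne 2$, and using that a nontrivial common factor $d$ divides the odd-or-$\ge 4$ number $q^s$ and also $p^r$, a short case check shows $d\ne 2$, so all Moore summands that appear lie in $\mathcal{M}_0$. Assembling the wedge completes the proof.
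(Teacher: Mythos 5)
Your proposal is correct and follows essentially the same approach as the paper's proof: reduce to $P^2(p^r)\wedge\Sigma^n X$, distribute the smash over the wedge decomposition, identify $P^2(p^r)\wedge S^k$ as a Moore space, and invoke Neisendorfer's splitting of a smash of two Moore spaces (the paper cites \cite[Corollary 6.6]{Nei80}); your $\gcd$ formulation is equivalent to the paper's statement in terms of $\min$ of exponents. The borderline case $d=2$ that worries you in fact cannot arise: since $p^r\neq 2$ and every Moore summand of $\Sigma^n X$ has order $q^s\neq 2$ by hypothesis, a nontrivial $d$ forces $p=q$, and if $p=q=2$ then both exponents are at least $2$ so $d=2^{\min(r,s)}\geq 4$, while for odd $p=q$ the value $d$ is an odd prime power --- a detail the paper's proof leaves implicit.
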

\begin{proof}
    By assumption, $\Sigma ^n X \simeq \bigvee_{j \in \mathcal{J}} S^{n_j}\vee \bigvee_{k \in \mathcal{K}}P^{n_k}(p_k^{r_k})$ for some indexing sets $\mathcal{J}$ and $\mathcal{K}$, where each $n_j \geq 2$, $n_k \geq 3$, and $p_k^{r_k} \neq 2$. There are homotopy equivalences \[P^{n+2}(p^r) \wedge X \simeq P^2(p^r) \wedge \Sigma^n X \simeq P^2(p^r)\wedge \left(\bigvee\limits_{j \in \mathcal{J}} S^{n_j} \vee \bigvee\limits_{k \in \mathcal{K}}P^{n_k}(p_k^{r_k})\right).\] Distributing the wedge sum over the smash product, there is a homotopy equivalence \begin{equation}\label{eqn:smashofMoore}P^2(p^r)\wedge \left(\bigvee\limits_{j \in \mathcal{J}} S^{n_j} \vee \bigvee\limits_{k \in \mathcal{K}}P^{n_k}(p_k^{r_k})\right) \simeq \bigvee\limits_{j \in \mathcal{J}} P^{n_j+2}(p^r) \vee \bigvee\limits_{k \in \mathcal{K}} (P^{2}(p^r) \wedge P^{n_k}(p_k^{r_k})).\end{equation}

    If $q$ and $q'$ are primes and $s,s' \geq 1$ such that $q^s \neq 2$, it was shown in  \cite[Corollary 6.6]{Nei80} that $P^{m}(q^s) \wedge P^{m'}(q^{s'})$ is contractible if $q \neq q'$, and if $q = q'$, there is a homotopy equivalence \[P^n(q^s) \wedge P^m(q^{s'}) \simeq P^{n+m}(q^{\min\{s,s'\}}) \vee P^{n+m-1}(q^{\min\{s,s'\}}).\]  Applying this to \eqref{eqn:smashofMoore}, $\bigvee_{j \in \mathcal{J}} P^{n_j+2}(p^r) \vee \bigvee_{k \in \mathcal{K}} (P^{2}(p^r) \wedge P^{n_k}(p_k^{r_k})) \in \bigvee(\mathcal{W} \cup \mathcal{M}_0)$ as claimed.
\end{proof}

\begin{theorem}
\label{thm:caatorsinPifzkinP}
Let $\K$ be a simplicial complex on the vertex set $[m]$ and let $A_1,\dots,A_m$ be spaces such that $\Sigma A_i \in \bigvee(\mathcal{W} \cup \mathcal{M}_0)$. If $\OZK \in \prod(\mathcal{P} \cup \mathcal{T}_0)$, then localised away from primes in $H_1(A_i)$ for all $i$, $\Omega \caa^\K \in \prod(\mathcal{P} \cup \mathcal{T}_0)$.
\end{theorem}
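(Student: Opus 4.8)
The plan is to proceed by induction on $l$, the number of indices $i$ for which $A_i \not\simeq S^1$, exactly mirroring the structure of the proof of Theorem~\ref{thm:caaFPT}. The base case $l=0$ is immediate: here $\caa^\K \simeq \ZK$, and the hypothesis $\OZK \in \prod\mathcal{P} \subseteq \prod(\mathcal{P} \cup \mathcal{T}_0)$ gives the conclusion with no primes needing to be inverted. (The degenerate case $\K = \Delta^{m-1}$, where $\caa^\K$ is contractible, is handled separately at the start.)

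For the inductive step, suppose $l \geq 1$ and pick $i$ with $A_i \not\simeq S^1$. Set $Y_i = S^1$ and $Y_j = A_j$ for $j \neq i$, so that by Lemma~\ref{lmm:replacing A with Y} there are homotopy equivalences
\[\Omega \caa^\K \simeq \Omega \caa^{\K \setminus i} \times \Omega \Sigma(F \wedge A_i), \qquad \Omega \cyy^\K \simeq \Omega \caa^{\K \setminus i} \times \Omega \Sigma(F \wedge S^1),\]
where $F$ is the homotopy fibre of $\caa^{link_\K(i)} \to \caa^{\K \setminus i}$. Both $\caa^{\K\setminus i}$ and $\cyy^\K$ involve only $l-1$ pairs that are not $S^1$, so the inductive hypothesis applies to them: localised away from the primes appearing in $H_1(A_j)$ over all $j$, both $\Omega\caa^{\K\setminus i}$ and $\Omega\cyy^\K$ lie in $\prod(\mathcal{P}\cup\mathcal{T}_0)$. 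It then suffices to show that $\Omega\Sigma(F\wedge A_i) \in \prod(\mathcal{P}\cup\mathcal{T}_0)$ after the same localisation. To control $F$, note that by Lemma~\ref{lmm:replacing A with Y} the space $\Omega\Sigma^2 F$ retracts off $\Omega\cyy^\K$, which is an $H$-space in $\prod(\mathcal{P}\cup\mathcal{T}_0)$; by part~(2) of Lemma~\ref{lem:PTclosedunderret}, $\Omega\Sigma^2 F \in \prod(\mathcal{P}\cup\mathcal{T}_0)$, hence by part~(4) of Lemma~\ref{lem:relbetweenPandW} the double suspension $\Sigma^3 F$ lies in $\bigvee(\mathcal{W}\cup\mathcal{M}_0)$.

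Now I would combine this with the hypothesis $\Sigma A_i \in \bigvee(\mathcal{W}\cup\mathcal{M}_0)$. Writing $\Sigma A_i \simeq \bigvee_j S^{n_j} \vee \bigvee_k P^{n_k}(p_k^{r_k})$ with $p_k^{r_k}\neq 2$, I distribute the smash product $\Sigma^3 F \wedge \Sigma A_i \simeq \Sigma^2(F\wedge A_i) \wedge S^2$ over this wedge. Each summand is then either $\Sigma^3 F \wedge S^{n_j} = \Sigma^{3+n_j} F$, which lies in $\bigvee(\mathcal{W}\cup\mathcal{M}_0)$ since $\Sigma^3 F$ does and suspension preserves this collection, or $\Sigma^3 F \wedge P^{n_k}(p_k^{r_k}) \simeq \Sigma^{n_k+1} P^{2}(p_k^{r_k}) \wedge F$; the latter is in $\bigvee\mathcal{M}_0$ by Lemma~\ref{lem:susimpliesMoore} applied to $X = F$ with $n = 3$, $p^r = p_k^{r_k}$ (using that $\Sigma^3 F \in \bigvee(\mathcal{W}\cup\mathcal{M}_0)$). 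Hence $\Sigma^5(F\wedge A_i) \in \bigvee(\mathcal{W}\cup\mathcal{M}_0)$; in particular, a sufficiently high suspension of $\Sigma(F\wedge A_i)$ is a wedge of spheres and Moore spaces. Since $\Sigma(F\wedge A_i)$ is itself simply connected after inverting the relevant primes, I can then invoke the decomposition of loops on a simply connected wedge of spheres and Moore spaces (via part~(3) of Lemma~\ref{lem:relbetweenPandW} together with the retraction of $\Omega\Sigma(F\wedge A_i)$ off $\Omega\Sigma\Omega\Sigma(F\wedge A_i)$ and part~(2) of Lemma~\ref{lem:PTclosedunderret}) to conclude $\Omega\Sigma(F\wedge A_i)\in\prod(\mathcal{P}\cup\mathcal{T}_0)$. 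Taking the product with $\Omega\caa^{\K\setminus i}$ completes the induction.

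The main obstacle I anticipate is bookkeeping the set of inverted primes carefully: each application of Lemma~\ref{lem:PTclosedunderret} and each Moore-space smash identity (Neisendorfer's $P^n(q^s)\wedge P^m(q^{s'})$ splitting) is clean only when $2$ is not among the mod-$p^r$ structure, so I need to track that the primes dividing $H_1(A_i)$ are precisely what is required and that no further primes sneak in from the fibre $F$ — this is guaranteed because $F$ inherits its torsion from the already-localised $\Omega\cyy^\K$. A secondary subtlety is ensuring $\Sigma(F\wedge A_i)$ is genuinely simply connected so that Whitehead's theorem and the James/Hilton--Milnor machinery apply; this follows since $F$ is at least connected (as a fibre of a map of simply connected polyhedral products over a connected base) and $A_i$ is connected, making $F\wedge A_i$ at least $1$-connected and its suspension $2$-connected.
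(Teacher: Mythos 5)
Your induction structure, base case, and use of Lemma~\ref{lmm:replacing A with Y} match the paper exactly, and you correctly extract $\Sigma^2 F \in \bigvee(\mathcal{W}\cup\mathcal{M}_0)$ from the retraction off $\Sigma\Omega\Sigma^2 F$. However, there is a genuine gap in the final step. You smash at one extra level of suspension, working with $\Sigma^3 F \wedge \Sigma A_i \simeq \Sigma^4(F\wedge A_i)$ and deducing $\Sigma^5(F\wedge A_i) \in \bigvee(\mathcal{W}\cup\mathcal{M}_0)$. But your goal is $\Omega\Sigma(F\wedge A_i) \in \prod(\mathcal{P}\cup\mathcal{T}_0)$, and there is no de-suspension principle that transfers a wedge decomposition of a \emph{high} suspension of a space $Z$ into a loop decomposition of $\Omega Z$ itself. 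The retraction you invoke at the end --- $\Omega\Sigma(F\wedge A_i)$ off $\Omega\Sigma\Omega\Sigma(F\wedge A_i)$ --- never connects to the space $\Sigma^5(F\wedge A_i)$ you actually controlled, so part~(2) of Lemma~\ref{lem:PTclosedunderret} has nothing to bite on.

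The way to close this is to decompose $\Sigma(F\wedge A_i) = F \wedge \Sigma A_i$ directly, \emph{without} the extra suspensions. Writing $\Sigma A_i \simeq \bigvee_j S^{n_j} \vee \bigvee_{j'} P^{n_{j'}}(p^{r})$ with $n_j \geq 2$, distributing the smash gives $\bigvee_j \Sigma^{n_j} F \vee \bigvee_{j'} P^{n_{j'}}(p^r)\wedge F$. The sphere summands are fine since $\Sigma^2 F\in\bigvee(\mathcal{W}\cup\mathcal{M}_0)$. For the Moore summands you must apply Lemma~\ref{lem:susimpliesMoore} with $n = n_{j'}-2$, which requires $\Sigma^{n_{j'}-2} F \in\bigvee(\mathcal{W}\cup\mathcal{M}_0)$, i.e.\ $n_{j'}\geq 4$. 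This is exactly where the hypothesis of localising away from the primes in $H_1(A_i)$ enters: a summand $P^3(p^r)$ in $\Sigma A_i$ corresponds to $p$-torsion in $H_2(\Sigma A_i) \cong H_1(A_i)$, and inverting those primes kills all such bottom Moore summands, forcing $n_{j'}\geq 4$. Your proof never uses the localisation for this purpose (you only mention it for connectivity bookkeeping), which is a sign the argument has drifted from where the hypothesis actually does its work. Once you have $\Sigma(F\wedge A_i)\in\bigvee(\mathcal{W}\cup\mathcal{M}_0)$ at the correct suspension level, part~(3) of Lemma~\ref{lem:relbetweenPandW} gives $\Omega\Sigma(F\wedge A_i)\in\prod(\mathcal{P}\cup\mathcal{T}_0)$ directly, with no retraction needed.
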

\begin{proof}
If each $H_1(A_i)$ is torsion free, we work integrally, otherwise localise away from primes appearing in $H_1(A_i)$ for all $i$. 

We proceed by induction on $l$, where $l$ is the number of $A_i$'s which are not homotopy equivalent to $S^1$. If $l=0$, each $A_i \simeq S^1$. Therefore, $\caa^\K \simeq \ZK$ and the result follows by assumption.

 Suppose the result is true for $l-1$ and consider the case $l \geq 1$. Pick $i$ such that $A_i \not \simeq S^1$. Let $Y_i = S^1$ and $Y_j = A_j$ for all $j \neq i$. By Lemma ~\ref{lmm:replacing A with Y}, it suffices to show $\Omega \caa^{\K \setminus i} \in \prod(\mathcal{P} \cup \mathcal{T}_0)$ and $\Omega\Sigma(F \wedge A_i) \in \prod(\mathcal{P} \cup \mathcal{T}_0)$. 

 By the inductive hypothesis, $\Omega \cyy^\K \in \prod(\mathcal{P} \cup \mathcal{T}_0)$ and $\Omega \caa^{\K \setminus i} \in \prod (\mathcal{P} \cup \mathcal{T}_0)$. Lemma~\ref{lmm:replacing A with Y} also implies that $\Omega \Sigma^2 F$ retracts off $\Omega \cyy^\K$. Hence, $\Omega \Sigma^2 F \in \prod (\mathcal{P} \cup \mathcal{T}_0)$. By part (4) of Lemma ~\ref{lem:relbetweenPandW}, $\Sigma \Omega \Sigma^2F \in \bigvee(\mathcal{W} \cup \mathcal{M}_0)$, and so the retraction of $\Sigma^2 F$ off $\Sigma \Omega \Sigma^2F$ implies by part (4) of Lemma ~\ref{lem:PTclosedunderret} that $\Sigma^2F \in \bigvee(\mathcal{W} \cup \mathcal{M}_0)$. Now consider $\Omega \Sigma(F \wedge A_i)$. Since $\Sigma A_i \in \bigvee(\mathcal{W} \cup \mathcal{M}_0)$, by shifting the suspension coordinate, there is a homotopy equivalence \[\Omega \Sigma(F \wedge A_i) \simeq \Omega \left(\bigvee\limits_{j \in \mathcal{J}} \Sigma^{n_j}F \vee \bigvee\limits_{j' \in \mathcal{J}'} P^{n_{j'}}(p^r) \wedge F\right)\]
 for indexing sets $\mathcal{J}$ and $\mathcal{J}'$. Note that each $n_j \geq 2$, and since we have localised away from primes appearing as $p$-torsion in $H_1(A_i)$ if necessary, each $n_{j'} \geq 4$. We have shown that $\Sigma^2F \in \bigvee(\mathcal{W} \cup \mathcal{M}_0)$, and so $\Sigma^{n_j} F \in \bigvee(\mathcal{W} \cup \mathcal{M}_0)$ for all $j$, and Lemma ~\ref{lem:susimpliesMoore} implies that $P^{n_{j'}}(p^r) \wedge F \in \bigvee\mathcal{M}_0$ for all $j'$. Hence, part (3) of Lemma ~\ref{lem:relbetweenPandW} implies that \[\Omega \left(\bigvee_{j \in \mathcal{J}} \Sigma^{n_j}F \vee \bigvee_{j' \in \mathcal{J}'} P^{n_{j'}}(p^r) \wedge F\right) \in \prod(\mathcal{P} \cup \mathcal{T}_0) \] as required.
\end{proof}

\begin{remark}
    In \cite{St25}, collections $\mathcal{M}$ and $\mathcal{T}$ were defined which contain $\mathcal{M}_0$ and $\mathcal{T}_0$ respectively, along with indecomposable $2$-torsion spaces. It is likely that an analogous result of Theorem ~\ref{thm:caatorsinPifzkinP} holds for the collections $\bigvee(\mathcal{W} \cup \mathcal{M})$ and $\prod(\mathcal{P} \cup \mathcal{T})$, however, the homotopy theory of these mod $2$ indecomposable spaces is more subtle. As our focus is on Anick's conjecture and we can localise away from $2$, we do not consider these here.
\end{remark}

Using parts (1) and (2) of Lemma ~\ref{lem:relbetweenPandW} and parts (1) and (3) of Lemma ~\ref{lem:PTclosedunderret}, the proof of Theorem ~\ref{thm:caatorsinPifzkinP} gives the following.

\begin{theorem}
    \label{thm:caainPifzkinP}
    Let $\K$ be a simplicial complex on $[m]$ and let $A_1,\dots,A_m$ be spaces such that $\Sigma A_i \in \bigvee\mathcal{W}$. Suppose $\OZK \in \prod\mathcal{P}$, then $\Omega \caa^\K \in \prod\mathcal{P}$. \qed
\end{theorem}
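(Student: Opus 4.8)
The plan is to mirror the inductive argument of Theorem~\ref{thm:caatorsinPifzkinP} verbatim, but now working integrally and carrying spaces in the smaller collections $\bigvee\mathcal{W}$ and $\prod\mathcal{P}$ throughout, rather than $\bigvee(\mathcal{W}\cup\mathcal{M}_0)$ and $\prod(\mathcal{P}\cup\mathcal{T}_0)$. Since $\Sigma A_i\in\bigvee\mathcal{W}$ forces each $A_i$ to be a suspension with torsion-free homology, there is nothing to localise away, which simplifies matters and removes the need for Lemma~\ref{lem:susimpliesMoore} entirely.

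First I would set up the induction on $l$, the number of indices $i$ with $A_i\not\simeq S^1$. The base case $l=0$ gives $\caa^\K\simeq\ZK$, so $\Omega\caa^\K\in\prod\mathcal{P}$ by hypothesis. For the inductive step, pick $i$ with $A_i\not\simeq S^1$, put $Y_i=S^1$ and $Y_j=A_j$ for $j\neq i$, and invoke Lemma~\ref{lmm:replacing A with Y}: it suffices to show $\Omega\caa^{\K\setminus i}\in\prod\mathcal{P}$ and $\Omega\Sigma(F\wedge A_i)\in\prod\mathcal{P}$, where $F$ is the homotopy fibre of $\caa^{link_\K(i)}\to\caa^{\K\setminus i}$. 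The first is immediate from the inductive hypothesis (fewer non-circle factors). For the second, the inductive hypothesis also gives $\Omega\cyy^\K\in\prod\mathcal{P}$, and Lemma~\ref{lmm:replacing A with Y} says $\Omega\Sigma^2 F$ retracts off $\Omega\cyy^\K$; since $\Omega\cyy^\K$ is an $H$-space, part~(1) of Lemma~\ref{lem:PTclosedunderret} yields $\Omega\Sigma^2 F\in\prod\mathcal{P}$. Then part~(2) of Lemma~\ref{lem:relbetweenPandW} gives $\Sigma(\Omega\Sigma^2 F)\in\bigvee\mathcal{W}$, and $\Sigma^2 F$ retracts off this suspension, so part~(3) of Lemma~\ref{lem:PTclosedunderret} gives $\Sigma^2 F\in\bigvee\mathcal{W}$.

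The final step is to feed this back into $\Omega\Sigma(F\wedge A_i)$. Writing $\Sigma A_i\simeq\bigvee_{j\in\mathcal{J}}S^{n_j}$ with each $n_j\geq 2$, and shifting the suspension coordinate, one gets $\Sigma(F\wedge A_i)\simeq\bigvee_{j\in\mathcal{J}}\Sigma^{n_j}F$; since $\Sigma^2 F\in\bigvee\mathcal{W}$ implies $\Sigma^{n_j}F\in\bigvee\mathcal{W}$ for every $n_j\geq 2$, we obtain $\Sigma(F\wedge A_i)\in\bigvee\mathcal{W}$, hence $\Omega\Sigma(F\wedge A_i)\in\prod\mathcal{P}$ by part~(1) of Lemma~\ref{lem:relbetweenPandW}. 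Combining the two pieces through Lemma~\ref{lmm:replacing A with Y} completes the induction.

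I do not expect any real obstacle here: this is precisely the statement flagged in the sentence preceding the theorem, ``Using parts (1) and (2) of Lemma~\ref{lem:relbetweenPandW} and parts (1) and (3) of Lemma~\ref{lem:PTclosedunderret}, the proof of Theorem~\ref{thm:caatorsinPifzkinP} gives the following.'' The only point requiring a little care is the use of the $H$-space hypothesis in part~(1) of Lemma~\ref{lem:PTclosedunderret}: the spaces we apply it to, $\Omega\cyy^\K$ and $\Omega\Sigma^2 F$, are genuine loop spaces and hence automatically $H$-spaces, so the hypothesis is met without needing to invert $2$ (cf.\ Remark~\ref{rmk:Hspheres}). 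Thus the theorem holds integrally as stated.
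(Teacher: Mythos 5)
Your proof is correct and is precisely the argument the paper points to immediately before the theorem: the proof of Theorem~\ref{thm:caatorsinPifzkinP} run with $\bigvee\mathcal{W}$ and $\prod\mathcal{P}$ in place of $\bigvee(\mathcal{W}\cup\mathcal{M}_0)$ and $\prod(\mathcal{P}\cup\mathcal{T}_0)$, invoking parts~(1)--(3) of Lemmas~\ref{lem:relbetweenPandW} and~\ref{lem:PTclosedunderret} in their sphere-only forms, and your observation that the $H$-space hypothesis in Lemma~\ref{lem:PTclosedunderret}(1) is automatic for the loop spaces involved is exactly right. One small step worth making explicit (it is left implicit in the paper's proof of Theorem~\ref{thm:caatorsinPifzkinP} as well): applying the inductive hypothesis to $\caa^{\K\setminus i}$ requires $\Omega\Z_{\K\setminus i}\in\prod\mathcal{P}$, which follows because $\Omega\Z_{\K\setminus i}$ retracts off the $H$-space $\Omega\ZK$ (take every $A_j=S^1$ in Lemma~\ref{lmm:replacing A with Y}) and then part~(1) of Lemma~\ref{lem:PTclosedunderret} applies.
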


\begin{example}
    When $\K$ is a triangulation of a sphere, $\ZK$ has the structure of a manifold \cite[Theorem 4.1.4]{BP15}. Using homotopy theoretic techniques specific to manifolds, it was shown in \cite{ST24} that $\OZK \in \prod \mathcal{P}$ if $\K$ is a triangulation of $S^3$ or a neighbourly triangulation of an odd dimensional sphere, with an explicit enumeration of the spheres given in \cite{ST25}. 
    Theorem ~\ref{thm:caainPifzkinP} shows that $\Omega \caa^\K \in \prod\mathcal{P}$ if each $\Sigma A_i \in \bigvee \mathcal{W}$ in these cases, despite the fact that $\caa^\K$ often will not be a manifold.
\end{example}

We now turn our attention to Anick's conjecture. Recall that Anick's conjecture asserts that if $X$ is a simply connected, finite $CW$-complex, then localised at all but finitely many primes, $\Omega X \in \prod(\mathcal{P} \cup \mathcal{T}_0)$. Using Theorem ~\ref{thm:anick for djk}, we can verify Anick's conjecture for a wide range of polyhedral products. Note that many results also hold without the finiteness assumption.

\begin{theorem}
    Let $\K$ be a simplicial complex on the vertex set $[m]$. Let $A_1,\dots,A_m$ be spaces such that $\Sigma A_i \in \bigvee(\mathcal{W} \cup \mathcal{M}_0)$, and let $P$ be the set of primes $\tau(\OZK)\cup\{p:p<2m\}$ and the primes appearing as $p$-torsion in $H_1(A_i)$ for all $i$. Localised away from $P$, $\Omega \caa^\K \in \prod(\mathcal{P} \cup \mathcal{T}_0)$.
\end{theorem}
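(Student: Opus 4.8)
The plan is to combine the two main results we have already established: Theorem~\ref{thm:anick for djk}, which says that $\OZK\in\prod\mathcal{P}$ after localising away from $\tau(\OZK)\cup\{p:p<2m\}$, and Theorem~\ref{thm:caatorsinPifzkinP}, which says that if $\OZK\in\prod\mathcal{P}$ and each $\Sigma A_i\in\bigvee(\mathcal{W}\cup\mathcal{M}_0)$, then $\Omega\caa^\K\in\prod(\mathcal{P}\cup\mathcal{T}_0)$ after localising away from the primes appearing in $H_1(A_i)$ for all $i$. So first I would localise away from the full set $P=\tau(\OZK)\cup\{p:p<2m\}\cup\bigcup_i\{p:p\text{-torsion in }H_1(A_i)\}$. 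Since $P\supseteq\tau(\OZK)\cup\{p:p<2m\}$, Theorem~\ref{thm:anick for djk} gives $\OZK\in\prod\mathcal{P}$ after this localisation.

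Next I would feed this into Theorem~\ref{thm:caatorsinPifzkinP}: the hypothesis $\Sigma A_i\in\bigvee(\mathcal{W}\cup\mathcal{M}_0)$ is assumed, and we have just verified $\OZK\in\prod\mathcal{P}$, so the conclusion $\Omega\caa^\K\in\prod(\mathcal{P}\cup\mathcal{T}_0)$ holds after localising away from the primes in $H_1(A_i)$ for all $i$. Since $P$ also contains these primes, the conclusion holds after localising away from $P$. There is a small bookkeeping point: $\prod\mathcal{P}$ makes sense localised away from any set of primes, and the statement "$\OZK\in\prod\mathcal{P}$ localised away from $P$" follows from "$\OZK\in\prod\mathcal{P}$ localised away from $P'$" for any $P'\subseteq P$, because inverting more primes only makes a product of (localised) spheres and loops on spheres into a product of (further localised) spheres and loops on spheres. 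The same monotonicity applies to $\prod(\mathcal{P}\cup\mathcal{T}_0)$ in passing from the hypothesis set of Theorem~\ref{thm:caatorsinPifzkinP} to the larger set $P$.

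Concretely, the proof reads: Localise away from $P$. Since $\tau(\OZK)\cup\{p:p<2m\}\subseteq P$, Theorem~\ref{thm:anick for djk} gives $\OZK\in\prod\mathcal{P}$. Since each $\Sigma A_i\in\bigvee(\mathcal{W}\cup\mathcal{M}_0)$ and $P$ contains the primes appearing as $p$-torsion in $H_1(A_i)$ for all $i$, Theorem~\ref{thm:caatorsinPifzkinP} applies and gives $\Omega\caa^\K\in\prod(\mathcal{P}\cup\mathcal{T}_0)$.

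This is essentially a one-line deduction, so there is no real obstacle; the only thing to be careful about is that the localisation sets in the cited theorems are subsets of $P$, and that membership in $\prod\mathcal{P}$ and $\prod(\mathcal{P}\cup\mathcal{T}_0)$ is preserved under inverting additional primes. Both are immediate. If one wanted the statement for finite $A_i$ (as in the promised Corollary), one would additionally note that for a connected finite complex $A_i$, $\Sigma A_i$ is a simply connected finite complex, hence (after a prime localisation if needed, or integrally when the homology is torsion-free) lies in $\bigvee(\mathcal{W}\cup\mathcal{M}_0)$ by the classification of $p$-local homotopy types of suspensions; but that is a separate corollary and not needed here.
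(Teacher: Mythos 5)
Your proof is correct and follows exactly the paper's own argument: localise away from $P$, apply Theorem~\ref{thm:anick for djk} to get $\OZK\in\prod\mathcal{P}$, then feed this into Theorem~\ref{thm:caatorsinPifzkinP}. The extra remarks about monotonicity under inverting additional primes are sound but not something the paper bothers to spell out.
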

\begin{proof}
Localised away from $P$, Theorem ~\ref{thm:anick for djk} implies that $\Omega \ZK \in \prod\mathcal{P}$. Theorem ~\ref{thm:caatorsinPifzkinP} then implies $\Omega \caa^\K \in \prod(\mathcal{P} \cup \mathcal{T}_0)$.
\end{proof}

Using a result of Huang and Theriault, we show that $\caa^\K$ satisfies Anick's conjecture if $A_i$ are connected finite $CW$-complexes.
\begin{corollary}
\label{cor:finitecaa}
    Suppose that $A_1,\dots,A_m$ are connected finite $CW$-complexes where $A_i$ is $s_i$-connected and $d_i$-dimensional. Let $P$ be the set of primes in $\tau(\Omega \ZK)$, $p \leq (d_i-s_i+1)/2$ for each $i$ and $\tau(A_i)$
    for each $i$. Then localised away from $P$, $\Omega\caa^\K\in\prod\mathcal{P}$. In particular, Anick's conjecture holds for $\caa^\K$.
\end{corollary}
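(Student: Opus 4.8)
The plan is to deduce the statement from Theorem~\ref{thm:caainPifzkinP} together with Theorem~\ref{thm:anick for djk}. Recall that Theorem~\ref{thm:caainPifzkinP} says: if $\Sigma A_i\in\bigvee\mathcal{W}$ for every $i$ and $\OZK\in\prod\mathcal{P}$, then $\Omega\caa^\K\in\prod\mathcal{P}$; and this, like the auxiliary Lemmas~\ref{lem:relbetweenPandW} and~\ref{lem:PTclosedunderret} it relies on, holds after localising away from a set of primes. So it suffices to produce the two hypotheses after localising away from $P$.

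First I would establish that $\Sigma A_i\in\bigvee\mathcal{W}$. Fix $i$ and localise away from the primes in $\tau(A_i)$, so that $H_*(A_i)$ becomes torsion-free; then localise away from $\{p:p\le(d_i-s_i+1)/2\}$. The result of Huang and Theriault referenced in the preamble provides, for an $s$-connected $d$-dimensional finite complex, a homotopy equivalence of its suspension with a wedge of spheres and mod-$p^r$ Moore spaces once $p>(d-s+1)/2$ (in this range $2(p-1)$ exceeds the range of dimensions of cells, so the first mod-$p$ Steenrod power vanishes and there are no nontrivial attaching maps). Since the homology of $A_i$ has already been made torsion-free, no Moore-space summands occur, and hence $\Sigma A_i\in\bigvee\mathcal{W}$ after localising away from $P$. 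One small point here is passing from the $p$-local form of the splitting to a single statement valid after inverting the whole finite set $P$, which is a routine arithmetic-square gluing.

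For the second hypothesis, Theorem~\ref{thm:anick for djk} gives $\OZK\in\prod\mathcal{P}$ once $\tau(\OZK)$ and all primes below $2m$ have been inverted, and these primes form part of $P$. Applying Theorem~\ref{thm:caainPifzkinP} then yields $\Omega\caa^\K\in\prod\mathcal{P}$ localised away from $P$. Finally, since each $A_i$ is a connected finite $CW$-complex and $\K$ has no ghost vertices, $\caa^\K$ is a simply connected finite $CW$-complex, and $\prod\mathcal{P}\subseteq\prod(\mathcal{P}\cup\mathcal{T}_0)$, so Anick's conjecture holds for $\caa^\K$. The main obstacle is the correct invocation of the suspension-splitting theorem of Huang and Theriault with exactly the stated connectivity--dimension range, together with its passage to the $\ZZ[1/P]$-local setting; the remaining steps are bookkeeping with results already established in the paper.
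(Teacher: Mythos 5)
Your proposal takes essentially the same route as the paper: invoke the Huang--Theriault suspension-splitting (the paper's citation to their Lemma 5.1) to get $\Sigma A_i\in\bigvee\mathcal{W}$ localised away from $\tau(A_i)\cup\{p:p\leq(d_i-s_i+1)/2\}$, then conclude by combining Theorem~\ref{thm:anick for djk} (for $\OZK\in\prod\mathcal{P}$) with Theorem~\ref{thm:caainPifzkinP}. One caveat: you assert that the primes $p<2m$ required by Theorem~\ref{thm:anick for djk} ``form part of $P$'', but the set $\{p:p<2m\}$ does not actually appear in the corollary's stated $P$ --- this is an omission in the paper's own statement (the theorem immediately preceding the corollary lists $\{p:p<2m\}$ explicitly in its set of primes), so your reading of the intended hypothesis is correct even though it is not literally what the corollary's $P$ provides.
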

\begin{proof}
By \cite[Lemma 5.1]{huang_theriault}, localised away from the set of primes $\tau(A_i)\cup\{p:p \leq (d_i-s_i+1)/2\}$, $\Sigma A_i \in \bigvee\mathcal{W}$. Hence $\Sigma A_i\in\bigvee\mathcal{W}$ localised away from $P$, so the result follows by Theorem ~\ref{thm:anick for djk} and Theorem ~\ref{thm:caainPifzkinP}.
\end{proof}

We can also give a characterisation for when Anick's conjecture holds for polyhedral products of the form $\ux^\K$.

\begin{corollary}
\label{cor:uxinP}
    Let $\K$ be a simplicial complex on $[m]$ and $X_1,\dots,X_m$ be simply connected $CW$-complexes. Then $\ux^{\K}$ satisfies Anick's conjecture if and only if all the spaces $X_i$ do.  
\end{corollary}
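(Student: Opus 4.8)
The plan is to derive the corollary by assembling the loop splitting of Theorem~\ref{thm:uxasplitfib}, the unconditional moment-angle decomposition of Theorem~\ref{thm:anick for djk}, and the transfer statement of Theorem~\ref{thm:caatorsinPifzkinP}. Here I read ``$X$ satisfies Anick's conjecture'' as the conclusion $\Omega X\in\prod(\mathcal{P}\cup\mathcal{T}_0)$ after localising away from some finite set of primes; note that if $\Omega\ux^\K$ satisfies this then it is of finite type, hence so is each retract $\Omega X_i$, so no finiteness hypothesis on the $X_i$ is lost.

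For the forward implication I would use a retraction argument. By Theorem~\ref{thm:uxasplitfib} there is a homotopy equivalence $\Omega\ux^\K\simeq\prod_{i=1}^m\Omega X_i\times\Omega\clxx^\K$, so each $\Omega X_i$ retracts off $\Omega\ux^\K$. Since $\Omega\ux^\K$ is a loop space, hence an $H$-space, part~(2) of Lemma~\ref{lem:PTclosedunderret}---which is valid localised away from a set of primes---shows that a decomposition $\Omega\ux^\K\in\prod(\mathcal{P}\cup\mathcal{T}_0)$ valid away from a finite set $P$ implies $\Omega X_i\in\prod(\mathcal{P}\cup\mathcal{T}_0)$ away from the same $P$, so each $X_i$ satisfies Anick's conjecture.

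For the converse, suppose $\Omega X_i\in\prod(\mathcal{P}\cup\mathcal{T}_0)$ localised away from finite sets $P_i$. The key observation is that $\clxx^\K$ is precisely the polyhedral product $\caa^\K$ for the cone pairs with $A_i=\Omega X_i$. Part~(4) of Lemma~\ref{lem:relbetweenPandW} gives $\Sigma\Omega X_i\in\bigvee(\mathcal{W}\cup\mathcal{M}_0)$ away from $P_i$, which is the hypothesis required by Theorem~\ref{thm:caatorsinPifzkinP}. Put $P:=\tau(\OZK)\cup\{p:p<2m\}\cup P_1\cup\dots\cup P_m$ and localise away from $P$. Then $\OZK\in\prod\mathcal{P}$ by Theorem~\ref{thm:anick for djk}, so Theorem~\ref{thm:caatorsinPifzkinP} applied with $A_i=\Omega X_i$ gives $\Omega\clxx^\K\in\prod(\mathcal{P}\cup\mathcal{T}_0)$ away from $P$ together with the primes occurring in $H_1(\Omega X_i)$; but since every space in $\mathcal{P}\cup\mathcal{T}_0$ except $S^1$ has trivial $H_1$, the group $H_1(\Omega X_i)$ is torsion-free after inverting $P_i$, so those extra primes already lie in $P_i\subseteq P$. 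As $\prod(\mathcal{P}\cup\mathcal{T}_0)$ is closed under finite products, the splitting $\Omega\ux^\K\simeq\prod_{i=1}^m\Omega X_i\times\Omega\clxx^\K$ now gives $\Omega\ux^\K\in\prod(\mathcal{P}\cup\mathcal{T}_0)$ away from $P$, as desired.

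I expect the only delicate point---and hence the main obstacle---to be the bookkeeping of inverted primes in the converse direction: one must verify that the primes introduced by the $H_1(\Omega X_i)$-clause of Theorem~\ref{thm:caatorsinPifzkinP} are absorbed into the finite set $P$, rather than enlarging it. Beyond this, no new homotopy-theoretic input should be needed; the argument is a direct combination of results already established in the paper.
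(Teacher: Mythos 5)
Your proof is correct and follows essentially the same route as the paper's: a retraction argument via Theorem~\ref{thm:uxasplitfib} and Lemma~\ref{lem:PTclosedunderret}(2) for necessity, and Theorem~\ref{thm:uxasplitfib} plus Theorem~\ref{thm:anick for djk}, Lemma~\ref{lem:relbetweenPandW}(4), and Theorem~\ref{thm:caatorsinPifzkinP} with $A_i=\Omega X_i$ for sufficiency. Your explicit check that the primes appearing in $H_1(\Omega X_i)$ are already contained in $P_i$ is a welcome piece of bookkeeping that the paper's proof subsumes under the phrase ``localised at a sufficiently large prime,'' but it does not change the argument.
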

\begin{proof}
     Suppose there exists $i$ such that $\Omega X_i \notin \prod(\mathcal{P} \cup \mathcal{T}_0)$, but $\Omega \ux^\K \in \prod(\mathcal{P} \cup \mathcal{T}_0)$. By Theorem ~\ref{thm:uxasplitfib}, $\Omega X_i$ retracts off $\Omega \ux^\K$. Part (2) of Lemma ~\ref{lem:PTclosedunderret} implies $\prod(\mathcal{P} \cup \mathcal{T}_0)$ is closed under retracts, and so $\Omega X_i \in \prod(\mathcal{P} \cup \mathcal{T}_0)$ which is a contradiction.

    By Theorem ~\ref{thm:uxasplitfib}, there is a homotopy equivalence \[\Omega \ux^{\K} \simeq \prod\limits_{i=1}^m \Omega X_i \times \Omega (\underline{C \Omega X},\underline{\Omega X})^\K.\] If each $X_i$ satisfies Anick's conjecture, to show $\ux^\K$ satisfies Anick's conjecture, it suffices to show that $(\underline{C \Omega X},\underline{\Omega X})^\K$ does. By Theorem ~\ref{thm:anick for djk}, localised at a sufficiently large prime, $\Omega \ZK \in \prod\mathcal{P}$, and if each $X_i$ satisfies Anick's conjecture, localised at a sufficiently large prime, $\Omega X_i \in \prod(\mathcal{P} \cup \mathcal{T}_0)$. Part (4) of Lemma ~\ref{lem:relbetweenPandW} implies that $\Sigma \Omega X_i \in \bigvee(\mathcal{W} \cup \mathcal{M}_0)$. Hence, Theorem ~\ref{thm:caatorsinPifzkinP} implies that $\Omega (\underline{C \Omega X},\underline{\Omega X})^\K \in \prod(\mathcal{P} \cup \mathcal{T}_0)$.
\end{proof}

\begin{remark}
In \cite{anick_loop}, Anick states that the $p$-local version of Theorem ~\ref{thm:R-local BHS} holds for arbitrary subcomplexes of products of spheres, provided they have the standard $CW$-structure. Such a subcomplex is precisely a polyhedral product of the form $X=(\underline{S},\underline{\ast})^\K,$ where $S_1,\dots,S_m$ are spheres. Our results can be seen as an explanation and a far-reaching generalisation of Anick's remark.
\end{remark}


\subsection*{Poincar\'e series of loop homology}

Our final application is to calculate the Poincar\'e series of the loop homology of simply connected polyhedral products of the form $\uxa^\K$. The following two results give sufficient conditions for a polyhedral product to be simply connected.
\begin{lemma}
\label{lmm:caa^K simply connected}
Let $A_1,\dots,A_m$ be $CW$-complexes and $\K$ be a simplicial complex on $[m]$. Assume that all $A_i$ are connected, or $\K$ is $1$-neighbourly. Then $\caa^\K$ is simply connected.
\end{lemma}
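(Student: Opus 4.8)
The plan is to recall the definition of the polyhedral product and identify the $2$-skeleton, then apply van Kampen's theorem. Write $X = \caa^\K$, and recall that $X = \bigcup_{\sigma \in \K}\prod_i Y_i^\sigma$ where $Y_i^\sigma = CA_i$ if $i\in\sigma$ and $Y_i^\sigma = A_i$ if $i\notin\sigma$. Since each $CA_i$ is contractible, the inclusion of the ``small'' piece indexed by $\sigma = \varnothing$, namely $\prod_i A_i$, already carries all the information about the low-dimensional fundamental group coming from the $A_i$ themselves; the point is to see that gluing in the cells of $CA_i$ directions does not create new loops. First I would handle the two hypotheses separately but along the same lines.

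If every $A_i$ is connected, then each $CA_i$ is connected and simply connected, and I would use the standard fact (a special case of \cite[Proposition 4.3]{PT19} or a direct van Kampen argument, e.g. as in \cite{BBC20}) that for connected $A_i$ one has $\pi_1\bigl(\caa^\K\bigr)$ depending only on the $1$-skeleton data of $\K$; more precisely, $\caa^\K$ is obtained from $\prod_{i=1}^m CA_i$ (which is contractible) by a construction that, on $\pi_1$, is governed by the missing faces of $\K$ of size $\le 2$. Since each pair $(CA_i, A_i)$ has $CA_i$ simply connected, the Mayer--Vietoris/van Kampen union over $\sigma\in\K$ collapses: each term $\prod_i Y_i^\sigma$ is a product of contractible spaces $CA_i$ and connected spaces $A_j$, hence connected, and the intersections are likewise connected, so $\pi_1(X)$ is generated by the images of $\pi_1$ of the pieces. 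Because every vertex $\{i\}\in\K$ (no ghost vertices), the piece with $\sigma=\{i\}$ contains $CA_i$ in the $i$-th coordinate, killing the image of $\pi_1(A_i)$ inside $\pi_1(X)$. As this holds for all $i$, and these images generate, $\pi_1(X)=1$.

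If instead $\K$ is $1$-neighbourly (every pair $\{i,j\}$ spans an edge, so $\K^{(1)}$ is the complete graph on $[m]$), then even without connectivity of the $A_i$ the argument goes through: for any $\sigma$ with $|\sigma|\le 1$ we may enlarge to $\sigma' = \{i,j\}\in\K$, and the piece $\prod_k Y_k^{\sigma'}$ has $CA_i\times CA_j$ in coordinates $i,j$; running van Kampen with the cover by the maximal faces, every generator of $\pi_1$ coming from an $A_i$ direction again lies in the image of a piece containing $CA_i$, hence dies. I would phrase this uniformly by noting that in both cases the $2$-skeleton of $X$ (with the product CW structure) is already simply connected, since $\operatorname{sk}_2 X$ only sees faces of $\K$ of size $\le 1$ together with cells from at most one $CA_i$ factor, and the relation imposed by $\{i\}\in\K$ trivialises the corresponding loops.

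The main obstacle is purely bookkeeping: making the van Kampen computation over the (possibly large) cover $\{\prod_i Y_i^\sigma : \sigma \in \K\}$ precise, in particular checking that all the pairwise (and, for the relations, triple) intersections are path-connected so that van Kampen applies in its colimit form. This is where the two hypotheses enter: connectivity of the $A_i$ makes every $\prod_i Y_i^\sigma$ and every intersection automatically connected, while $1$-neighbourliness ensures that the relevant pieces always contain two cone coordinates and hence a simply connected factor large enough to connect things up. I expect no genuine difficulty beyond this; the cleanest writeup is probably to reduce immediately to the $2$-skeleton and cite the standard polyhedral-product van Kampen statement, e.g. the computation of $\pi_1$ of polyhedral products in \cite{PT19,BBC20}, rather than redo van Kampen from scratch.
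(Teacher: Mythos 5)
Your van Kampen strategy is a genuinely different route from the paper, which for the connected case simply cites \cite[Proposition 5.3]{fat_wedge} (note: neither \cite{PT19} nor \cite{BBC20} contains the $\pi_1$-computation you are gesturing at). When every $A_i$ is connected your sketch works: the pieces $\prod_{i\in\sigma}CA_i\times\prod_{i\notin\sigma}A_i$ and all their intersections are path-connected, the inclusion $\prod_iA_i\hookrightarrow\caa^\K$ is $\pi_1$-surjective, and factoring through the $\sigma=\{i\}$ piece kills the $i$-th factor of $\prod_j\pi_1(A_j)$; since those factors generate, $\pi_1(\caa^\K)=1$.

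For the $1$-neighbourly case, however, there is a real gap. The pieces and their pairwise intersections need \emph{not} be path-connected when some $A_k$ is disconnected, and enlarging $\sigma$ to two cone coordinates does not repair this: a simply connected factor $CA_{i_1}\times CA_{i_2}$ sitting in a product alongside a disconnected $A_k$ does not make that product connected. So the group form of van Kampen does not apply, and you would be forced into the groupoid version, which requires genuine extra work that the proposal does not supply. Your closing ``$2$-skeleton'' remark is, in spirit, exactly what the paper does here, but the stated justification is incorrect in a way that hides where the hypothesis is used: $\sk_2\caa^\K$ does \emph{not} only see faces of size $\le 1$ and at most one cone factor. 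Using reduced cones (so $\sk_0 CA_i=\sk_0 A_i$), the paper shows every $2$-cell of $\prod_i CA_i$ already lies in $\caa^\K$: a product of one $2$-cell with $0$-cells lands in $CA_i\times\prod_{j\neq i}A_j$ because $\{i\}\in\K$, while a product of two $1$-cells from $CA_{i_1},CA_{i_2}$ with $0$-cells lands in $CA_{i_1}\times CA_{i_2}\times\prod_{j\neq i_1,i_2}A_j$ because $\{i_1,i_2\}\in\K$ --- and it is this second case, involving \emph{two} cone factors and a face of size $2$, that uses $1$-neighbourliness. This gives $\sk_2\caa^\K=\sk_2\prod_iCA_i$ and hence $\pi_1(\caa^\K)\cong\pi_1\bigl(\prod_iCA_i\bigr)=1$. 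I would replace the van Kampen argument in the $1$-neighbourly case with this skeletal argument.
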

\begin{proof}
A proof of the case where each $A_i$ is connected can be found in \cite[Proposition 5.3]{fat_wedge}.

Let $\K$ be $1$-neighbourly. Denote by $\sk_n X$ the $n$-skeleton of a $CW$-complex $X$.
We consider the $CW$-complexes $CA_i$ as reduced cones, so it follows that $\sk_0CA_i=\sk_0 A_i$. 

We claim $\sk_2\caa^\K=\sk_2\prod_{i=1}^m CA_i$. Indeed, any $2$-cell of $\prod_{i=1}^m CA_i$ is either a product of a single $2$-cell and some $0$-cells, which then belongs to $CA_i\times\prod_{j\neq i}A_j\subseteq\caa^\K$, or two $1$-cells and some $0$-cells, which then belongs to $CA_{i_1}\times CA_{i_2}\times\prod_{j\neq i_1,i_2}A_j\subseteq\caa^\K$ since $\{i_1,i_2\}\in\K$. Since $\pi_1(X)=\pi_1(\sk_2 X),$ it follows that $\pi_1(\caa^\K)\cong\pi_1(\prod_{i=1}^m CA_i)$, which is trivial.
\end{proof}
\begin{corollary}
\label{crl:uxa^K simply connected}
Let $(X_1,A_1),\dots,(X_m,A_m)$ be $CW$-pairs and $\K$ be a simplicial complex on $[m]$. Denote by $G_i$ the homotopy fibre of $A_i\to X_i$. Assume that each $X_i$ is simply connected. Also, assume all $G_i$ are connected or $\K$ is $1$-neighbourly. Then $\uxa^\K$ is simply connected.
\end{corollary}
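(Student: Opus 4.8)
The plan is to read off $\pi_1(\uxa^\K)$ from the looped splitting of Theorem~\ref{thm:uxasplitfib} and then invoke Lemma~\ref{lmm:caa^K simply connected}. First I would check that $\uxa^\K$ is path-connected, which uses only that each $X_i$ is path-connected. Since $\K$ has no ghost vertices, every singleton $\{j\}$ lies in $\K$. Given a point $p$ in the piece $\prod_{i\in\sigma}X_i\times\prod_{i\notin\sigma}A_i$, one can first slide the coordinates indexed by $\sigma$ to the basepoint inside the corresponding $X_i$, staying in that piece, thereby reaching a point of $\prod_{i=1}^m A_i$; then, for each $j$ in turn, slide the $j$-th coordinate to the basepoint inside the piece $X_j\times\prod_{i\neq j}A_i$ (permissible since $\{j\}\in\K$). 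Concatenating these paths connects $p$ to the basepoint.

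Next, applying Theorem~\ref{thm:uxasplitfib} to the pairs $(X_i,A_i)$ gives a homotopy equivalence $\Omega\uxa^\K\simeq\prod_{i=1}^m\Omega X_i\times\Omega\cgg^\K$, where $G_i$ is the homotopy fibre of $A_i\to X_i$. Taking $\pi_0$ and using $\pi_0(\Omega Y)\cong\pi_1(Y)$ as groups, this yields $\pi_1(\uxa^\K)\cong\prod_{i=1}^m\pi_1(X_i)\times\pi_1(\cgg^\K)$; since each $X_i$ is simply connected, $\pi_1(\uxa^\K)\cong\pi_1(\cgg^\K)$. Finally, $\cgg^\K$ is simply connected by Lemma~\ref{lmm:caa^K simply connected} applied with the $G_i$ in place of the $A_i$: the hypothesis required there --- that all of the spaces are connected, or that $\K$ is $1$-neighbourly --- is exactly what we have assumed on the $G_i$. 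Hence $\pi_1(\uxa^\K)=0$, and together with path-connectedness this gives simple connectivity.

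I do not anticipate a real obstacle here; the only points requiring a word of care are the elementary path-connectedness bookkeeping above, and the fact that a homotopy fibre $G_i$ need only have the homotopy type of a $CW$-complex rather than being one on the nose. But since the pairs $(CG_i,G_i)$ are cofibrations and polyhedral products of such pairs are invariant under homotopy equivalence of the pairs, we may replace each $G_i$ by a homotopy equivalent $CW$-complex before applying the lemma, and nothing is lost.
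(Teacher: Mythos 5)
Your proof is correct and follows the same route as the paper: both reduce via Theorem~\ref{thm:uxasplitfib} (the split fibration/looped equivalence) to the simple connectivity of $\prod X_i$ and of $\cgg^\K$, and both handle $\cgg^\K$ by invoking Lemma~\ref{lmm:caa^K simply connected} with the $G_i$ in place of the $A_i$. The only difference is cosmetic: the paper reads $\pi_0$ and $\pi_1$ off the long exact sequence of the fibration $\cgg^\K \to \uxa^\K \to \prod X_i$ in one stroke, whereas you take $\pi_0$ of the looped splitting (where a set-level bijection already suffices, no $H$-map claim needed) and supply a separate path-connectedness argument; your explicit remark about replacing each homotopy fibre $G_i$ by a $CW$-model before quoting the lemma is a reasonable bit of care that the paper leaves implicit.
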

\begin{proof}
By Theorem \ref{thm:gendecompuxa}, it is sufficient to show that $\prod_{i=1}^m X_i$ and $\cgg^\K$ are simply connected. The first statement holds by assumption, and the second holds by Lemma \ref{lmm:caa^K simply connected}.
\end{proof}

We first calculate the Poincar\'e series of the loop homology of polyhedral products of the form $\caa^\K$. Recall that $F(V;t):=\sum_{n\geq 0}\dim_\k(V_n)\cdot t^n\in\ZZ[[t]]$ is the Poincar\'e series of a graded $\k$-vector space $V$ of finite type.

\begin{theorem}
\label{thm:poincare series for hocaak}
    Let $A_1,\dots,A_m$ be spaces of finite type and $\K$ be a simplicial complex on $[m]$ such that $\caa^\K$ is simply connected. Let $\k$ be a field. Then $\Omega \caa^\K$ is of finite type, and $$1/F(H_*(\Omega\caa^\K;\k);t)=\sum_{J\subseteq[m]}\widehat{bb}_{\K_J,\k}(t)\cdot\prod_{j\in J}F(\H_*(A_j;\k);t)\in\ZZ[[t]].$$
\end{theorem}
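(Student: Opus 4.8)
The plan is to prove this by induction on $l$, the number of indices $i$ for which $A_i$ is not homotopy equivalent to $S^1$, mirroring the inductive structure of Theorems~\ref{thm:caaFPT} and~\ref{thm:caatorsinPifzkinP}. The base case $l=0$ gives $\caa^\K\simeq\ZK$, and since $\widehat{b}_{\K_J,\k}(t)$ and the claimed formula reduce (using $F(\widetilde H_*(S^1;\k);t)=t$) to $1/F(H_*(\OZK;\k);t)=\sum_J\widehat{b}_{\K_J,\k}(t)t^{|J|}$, this is exactly Proposition~\ref{prp:bb polynomials and loop homology of zk}. First I would also observe that simple connectivity of $\caa^\K$ together with Lemma~\ref{lmm:replacing A with Y} forces the relevant fibres $F$ to be simply connected, so that all the spaces appearing in the decomposition are simply connected of finite type; this is what guarantees $\Omega\caa^\K$ is of finite type and that all Poincar\'e series involved are well-defined elements of $\ZZ[[t]]$ with invertible constant term $1$.

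For the inductive step, pick $i$ with $A_i\not\simeq S^1$, set $Y_i=S^1$ and $Y_j=A_j$ otherwise. Lemma~\ref{lmm:replacing A with Y} gives the two homotopy equivalences
$$\Omega\caa^\K\simeq\Omega\caa^{\K\setminus i}\times\Omega\Sigma(F\wedge A_i),\qquad \Omega\cyy^\K\simeq\Omega\caa^{\K\setminus i}\times\Omega\Sigma(F\wedge S^1),$$
where $F$ is the homotopy fibre of $\caa^{\mathrm{link}_\K(i)}\to\caa^{\K\setminus i}$. The multiplicativity of Poincar\'e series under products, together with the Bott--Samelson theorem giving $F(H_*(\Omega\Sigma Z;\k);t)=1/(1-F(\widetilde H_*(Z;\k);t))$ for a connected space $Z$ of finite type, lets me extract $F(\widetilde H_*(F;\k);t)$ from the second equivalence: since $\Omega\cyy^\K$ is (by the inductive hypothesis, $\cyy^\K$ having only $l-1$ non-$S^1$ entries) governed by the formula with the vertex $i$ replaced by $S^1$, I can solve for the ``unknown'' $F(\widetilde H_*(F;\k);t)$ in terms of the Backelin--Berglund polynomials of $\K$ and $\K\setminus i$ and the series of the $A_j$, $j\neq i$. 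Plugging this back into the first equivalence and using $\widetilde H_*(F\wedge A_i;\k)\cong\widetilde H_*(F;\k)\otimes\widetilde H_*(A_i;\k)$ (reduced K\"unneth, so $F(\widetilde H_*(F\wedge A_i);t)=F(\widetilde H_*(F);t)\cdot F(\widetilde H_*(A_i);t)$) yields a formula for $1/F(H_*(\Omega\caa^\K;\k);t)$.

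The main obstacle, and the real content of the proof, is the combinatorial identity needed to see that the expression obtained this way equals $\sum_{J\subseteq[m]}\widehat{b}_{\K_J,\k}(t)\prod_{j\in J}F(\widetilde H_*(A_j;\k);t)$. Concretely, writing $a_j:=F(\widetilde H_*(A_j;\k);t)$ and abbreviating $\widehat b_J:=\widehat b_{\K_J,\k}(t)$, I must show that the recursion produced by the two fibrations is consistent with splitting the sum over $J$ according to whether $i\in J$: the terms with $i\notin J$ should assemble into $\sum_{J\subseteq[m]\setminus i}\widehat b_J\prod_{j\in J}a_j$, which is the $\K\setminus i$ contribution, while the terms with $i\in J$ should produce the $\Omega\Sigma(F\wedge A_i)$ factor. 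The key algebraic input here is a relation between $\widehat b_{\K_J}$, $\widehat b_{(\K\setminus i)_J}$, and $\widehat b_{(\mathrm{link}_\K i)_J}$ coming from Lemma~\ref{lmm:replacing A with Y} applied in the all-$S^1$ case (equivalently, the known behaviour of $1/F(H_*(\OZK);t)$ under deletion of a vertex), which I would extract by first running the argument with $A_i=S^1$ to pin down $F(\widetilde H_*(F;\k);t)$ purely in terms of Backelin--Berglund data, and then feeding that back in. Once this bookkeeping identity is verified, substituting and collecting terms completes the induction; the finite-type claim follows along the way since each factor in the decomposition is a loop space on a simply connected finite-type complex.
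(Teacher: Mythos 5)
Your inductive strategy and the idea of eliminating the unknown Poincar\'e series of the fibre $F$ by playing off the two equivalences of Lemma~\ref{lmm:replacing A with Y} is exactly what the paper does, and your plan of solving explicitly for $F(\widetilde H_*(F;\k);t)$ and substituting back is algebraically equivalent to the paper's ratio manipulation. However, you have misidentified where the ``real content'' lies, and this is where your write-up is genuinely incomplete. No relation involving $\widehat b_{(\mathrm{link}_\K i)_J}$ is needed; the fibre $F$ enters only through the single quantity $w := F(\widetilde H_*(F;\k);t)$, which cancels out and never has to be compared to Backelin--Berglund data. With $a_j := F(\widetilde H_*(A_j;\k);t)$, $x := 1/F(H_*(\Omega\caa^\K))$, $y := 1/F(H_*(\Omega\cyy^\K))$, $z := 1/F(H_*(\Omega\caa^{\K\setminus i}))$, the two equivalences give $x = z(1 - w a_i)$ and $y = z(1 - wt)$, hence $x - z = (y - z)\cdot a_i/t$ with no trace of $w$ (or of the link). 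The only combinatorial fact needed to close the induction is that $(\K\setminus i)_J = \K_J$ for $J\subseteq[m]\setminus i$, which is immediate from the definition of full subcomplex. Substituting the inductive hypothesis for $z$ (a sum over $J\not\ni i$) and for $y$ (a sum over all $J$ with $a_i$ replaced by $t$), the difference $y-z$ is the sum over $J\ni i$ of $\widehat b_{\K_J}\cdot t\prod_{j\in J\setminus i}a_j$, and multiplying by $a_i/t$ and adding $z$ yields $\sum_{J\subseteq[m]}\widehat b_{\K_J}\prod_{j\in J}a_j$ directly. So the ``bookkeeping identity'' you were worried about is not a hidden three-term recursion among $\widehat b_{\K}$, $\widehat b_{\K\setminus i}$, $\widehat b_{\mathrm{link}_\K i}$; it is the trivial compatibility of full subcomplexes under vertex deletion, and you should say that explicitly rather than gesturing at a lemma you have not formulated.
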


\begin{proof}
We proceed by induction on $l$, where $l$ the number of $A_i$'s which are not homotopy equivalent to $S^1$. If $l=0$, then $\caa^\K\simeq \ZK$, and $1/F(H_*(\OZK;\k);t)=\sum_J\widehat{bb}_{\K_J,\k}(t)t^{|J|}$ holds by Proposition \ref{prp:bb polynomials and loop homology of zk}. Also, $\OZK$ is of finite type since $\ZK$ is simply connected and of finite type.
 
Suppose the result is true for $l-1$ and consider the case $l \geq 1$. Pick $i$ such that $A_i \not \simeq S^1$. By Lemma ~\ref{lmm:replacing A with Y}, to show $\Omega \caa^\K$ is of finite type, it suffices to show that $\Omega \caa^{\K \setminus i}$ and $\Omega \Sigma(F \wedge A_i)$ are of finite type. Lemma ~\ref{lmm:replacing A with Y} implies $\Omega\caa^{\K \setminus i}$ and $\Omega \Sigma^2 F$ retract off $\Omega \cyy^\K$, which is of finite type by the inductive hypothesis. The retraction of $\Sigma^2 F$ off $\Sigma \Omega \Sigma^2 F$ implies $F$ is of finite type, and so the reduced K\"unneth theorem implies $\Sigma(F \wedge A_i)$ is of finite type. Hence, so is $\Omega \Sigma(F \wedge A_i)$ by the Bott-Samelson theorem.

Since the spaces $\Omega\caa^\K$, $\Omega\caa^{\K\setminus i}$, $\Omega\cyy^\K$, $F$, $A_i$ are of finite type, Poincar\'e series of their homology are well defined. In the notation of Lemma~\ref{lmm:replacing A with Y}, denote
    $$x(t):=1/F(H_*(\Omega\caa^\K;\k);t),\quad y(t):=1/F(H_*(\Omega\cyy^\K;\k);t),$$
    $$z(t):=1/F(H_*(\Omega\caa^{\K\setminus i};\k);t),\quad w(t):=F(\H_*(F;\k);t).$$
    We will omit $\k$ and $t$ to simplify notation. Lemma ~\ref{lmm:replacing A with Y}, the (reduced) K\"unneth theorem and the Bott-Samelson theorem imply
    \begin{gather*}x=z\cdot (1-w\cdot F(\H_*(A_i))),\quad y=z\cdot (1-w\cdot F(\H_*(S^1))),\\
    \text{hence }x-z=-z\cdot w\cdot F(\H_*(A_i))=(y-z)\frac{F(\H_*(A_i))}{F(\H_*(S^1))}.
    \end{gather*}
    
    By the inductive hypothesis applied to $\caa^{\K\setminus i}$ and $\cyy^\K$,
    $$z=\sum_{\begin{smallmatrix}J\subseteq[m]:\\
    i\notin J
    \end{smallmatrix}}\widehat{bb}_{\K_J}\cdot\prod_{j\in J}F(\H_*(A_j)),\quad
    y-z=\sum_{\begin{smallmatrix}J\subseteq[m]:\\
    i\in J
    \end{smallmatrix}}\widehat{bb}_{\K_J}\cdot F(\H_*(S^1))\prod_{j\in J\setminus i}F(\H_*(A_j)),$$
    hence 
    $$x-z=\sum_{\begin{smallmatrix}J\subseteq[m]:\\
    i\in J
    \end{smallmatrix}}\widehat{bb}_{\K_J}\cdot F(\H_*(A_i))\prod_{j\in J\setminus i}F(\H_*(A_j))=
    \sum_{\begin{smallmatrix}J\subseteq[m]:\\
    i\in J
    \end{smallmatrix}}\widehat{bb}_{\K_J}\cdot\prod_{j\in J}F(\H_*(A_j))
    $$
    and $x=\sum_{J\subseteq[m]}\widehat{bb}_{\K_J}\cdot\prod_{j\in J}F(\H_*(A_j)),$
    as required.
\end{proof}

\begin{theorem}
\label{thm:PSforuxa}
    Let $(X_1,A_1),\dots,(X_m,A_m)$ be $CW$-pairs and $\K$ be a simplicial complex on $[m]$ such that $\uxa^\K$ is simply connected and of finite type. Let $\k$ be a field. Denote by $G_i$ the homotopy fibre of $A_i \rightarrow X_i$. 
    Then
    $$1/F(H_*(\Omega\uxa^\K;\k);t)=\sum_{J\subseteq[m]}\widehat{bb}_{\K_J,\k}(t)\cdot\prod_{j\in J}F(\H_*(G_j;\k);t) \cdot \prod_{i=1}^m 1/F(H_*(\Omega X_i;\k);t)\in\ZZ[[t]].$$
\end{theorem}
\begin{proof}
    By Theorem ~\ref{thm:uxasplitfib}, there is a homotopy equivalence $\Omega \uxa^\K \simeq \Omega \cgg^\K\times\prod_{i=1}^m \Omega X_i.$  Since $\uxa^\K$ is simply connected of finite type, so is each $X_i$ and $\cgg^\K$. Moreover, $\Omega \uxa^\K$ is of finite type, implying that each $\Omega X_i$ and $\Omega \cgg^\K$ is also of finite type. The homotopy equivalence implies \[1/F(H_*(\Omega\uxa^\K;\k);t) = 1/F(H_*(\Omega \cgg^\K;\k);t)\prod\limits_{i=1}^m 1/F(H_*(\Omega X_i;\k);t).\] The result follows from Theorem ~\ref{thm:poincare series for hocaak}.
\end{proof}

By Proposition \ref{prp:bb polynomials for flag complexes}, if $\K$ is a flag complex, then $\widehat{bb}_{\K_J,\k}(t) = 1-\chi(\K_J)$. Therefore, we obtain the following, which is a generalisation of a result of Cai \cite[Corollary 5.11]{Ca24} who considered polyhedral products of the form $(\underline{X},\underline{\ast})^\K$ where $\K$ is a flag complex.

\begin{corollary}
    Let $\K$ be a flag complex on $[m]$. Let $(X_1,A_1),\dots,(X_m,A_m)$ be $CW$-pairs, and $\k$ be a field. Denote by $G_i$ the homotopy fibre of $A_i \rightarrow X_i$. Assume that each $X_i$ is simply connected and all $G_i$ are connected. Then
    \[1/F(H_*(\Omega\uxa^\K;\k);t)=\sum_{J\subseteq[m]}(1-\chi(\K_J))\cdot\prod_{j\in J}F(\H_*(G_j;\k);t) \cdot \prod_{i=1}^m 1/F(H_*(\Omega X_i;\k);t)\in\ZZ[[t]].\eqno\qed
    \]
\end{corollary}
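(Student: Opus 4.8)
The plan is to obtain this as an immediate specialisation of Theorem~\ref{thm:PSforuxa} to flag complexes, the only extra input being the evaluation of the reflected Backelin--Berglund polynomials provided by Proposition~\ref{prp:bb polynomials for flag complexes}. First I would check that Theorem~\ref{thm:PSforuxa} applies, i.e.\ that $\uxa^\K$ is simply connected and of finite type. Finite type is inherited from the $CW$-pairs $(X_i,A_i)$, since $\uxa^\K$ is a finite union of finite products of the spaces $X_i$ and $A_i$. Simple connectivity is exactly the content of Corollary~\ref{crl:uxa^K simply connected}, whose hypotheses --- each $X_i$ simply connected, and either all homotopy fibres $G_i$ connected or $\K$ being $1$-neighbourly --- coincide with those assumed here.

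Granting this, Theorem~\ref{thm:PSforuxa} gives
$$1/F(H_*(\Omega\uxa^\K;\k);t)=\sum_{J\subseteq[m]}\widehat{b}_{\K_J,\k}(t)\cdot\prod_{j\in J}F(\H_*(G_j;\k);t) \cdot \prod_{i=1}^m 1/F(H_*(\Omega X_i;\k);t),$$
so it only remains to rewrite the coefficients $\widehat{b}_{\K_J,\k}(t)$. Here I would use the elementary observation that flagness is inherited by full subcomplexes: a flag complex is the clique complex of its $1$-skeleton, and for $J\subseteq[m]$ the subcomplex $\K_J$ is the clique complex of the subgraph of that $1$-skeleton induced on $J$, hence again flag. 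Applying Proposition~\ref{prp:bb polynomials for flag complexes} to each $\K_J$ therefore yields $\widehat{b}_{\K_J,\k}(t)=1-\chi(\K_J)$, and substituting this into the displayed identity produces the stated formula.

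I do not anticipate any real obstacle: all the substance is already packaged in Theorem~\ref{thm:PSforuxa} and Proposition~\ref{prp:bb polynomials for flag complexes}, and this corollary amounts to combining them. The only points needing a line of care are confirming that the finite-type and simple-connectivity hypotheses of Theorem~\ref{thm:PSforuxa} are genuinely available under the weaker-looking assumptions stated here (handled via Corollary~\ref{crl:uxa^K simply connected}), and recalling that the class of flag complexes is closed under passing to full subcomplexes.
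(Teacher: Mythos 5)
Your proposal is correct and matches the paper's approach exactly: the corollary is obtained by substituting $\widehat{b}_{\K_J,\k}(t)=1-\chi(\K_J)$ (Proposition~\ref{prp:bb polynomials for flag complexes}) into Theorem~\ref{thm:PSforuxa}, with simple connectivity of $\uxa^\K$ supplied by Corollary~\ref{crl:uxa^K simply connected}. Your extra remark that flagness is preserved under passing to full subcomplexes is harmless but unnecessary, since Proposition~\ref{prp:bb polynomials for flag complexes} already states the conclusion for $\K_J$ directly.
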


When $\Omega \uxa^\K \in \prod\mathcal{P}$, one can use the Poincar\'e series to enumerate the terms which appear. 

\begin{corollary}
\label{cor:enumsphere}
    Let $\K$ be a simplicial complex on $[m]$, and let $(X_1,A_1),\dots,(X_m,A_m)$ be $CW$-pairs. Denote by $G_i$ the homotopy fibre of $A_i \rightarrow X_i$. Suppose $\Omega \uxa^\K \in \prod\mathcal{P}$, that is there is a homotopy equivalence \[\Omega \uxa^\K \simeq (S^1)^{\times A} \times (S^3)^{\times B} \times (S^7)^{\times C}\times \prod_{n \geq 3,n \neq 4,8} (\Omega S^n)^{\times D_n}.\] Then the integers $A$, $B$, $C$ and $D_n$ are determined by the identity \[\sum_{J\subseteq[m]}\widehat{bb}_{\K_J,\QQ}(t)\cdot\prod_{j\in J}F(\H_*(G_j;\QQ);t) \cdot \prod_{i=1}^m 1/F(H_*(\Omega X_i;\QQ);t) = \frac{\prod_{n \geq 3, n \neq 4,8} (1-t^{n-1})^{D_n}}{(1+t)^A(1+t^3)^B(1+t^7)^C}.\]
\end{corollary}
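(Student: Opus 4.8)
The plan is to compute the formal power series $1/F\big(H_*(\Omega\uxa^\K;\QQ);t\big)$ in two different ways and to compare them. First, since the displayed homotopy equivalence exhibits $\Omega\uxa^\K$ as a connected space of finite type, the space $\uxa^\K$ is simply connected and of finite type, so Theorem~\ref{thm:PSforuxa} applies with $\k=\QQ$ and identifies $1/F\big(H_*(\Omega\uxa^\K;\QQ);t\big)$ with exactly the left-hand side of the asserted identity. (That only spheres of dimension $1,3,7$ appear in the decomposition is because $\Omega\uxa^\K$ is a loop space, hence an $H$-space, so Remark~\ref{rmk:Hspheres} applies.)

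Second, the displayed homotopy equivalence together with the Künneth theorem identifies $H_*(\Omega\uxa^\K;\QQ)$ with a tensor product of the rational homologies of the factors, so its Poincaré series is the corresponding product of Poincaré series. Here $F\big(H_*(S^{2k-1};\QQ);t\big)=1+t^{2k-1}$, while $F\big(H_*(\Omega S^n;\QQ);t\big)=1/(1-t^{n-1})$ for every $n\ge 2$: when $n$ is odd this is the Poincaré series of a polynomial algebra on a single generator in degree $n-1$, and when $n$ is even one uses $H_*(\Omega S^n;\QQ)\cong\Lambda(x_{n-1})\otimes\QQ[y_{2n-2}]$, whose Poincaré series $(1+t^{n-1})/(1-t^{2n-2})$ collapses to $1/(1-t^{n-1})$. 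This gives
\[
F\big(H_*(\Omega\uxa^\K;\QQ);t\big)=(1+t)^{A}(1+t^3)^{B}(1+t^7)^{C}\prod_{n\ge 3,\;n\neq 4,8}(1-t^{n-1})^{-D_n},
\]
and taking reciprocals and equating with the first computation yields the claimed identity.

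It then remains to check that this identity \emph{determines} the exponents $A,B,C$ and the $D_n$. Using $1+t^k=(1-t^{2k})/(1-t^k)$, one rewrites the right-hand side of the identity as $\prod_{j\ge 1}(1-t^j)^{e_j}$, where each $e_j\in\ZZ$ is an explicit $\ZZ$-linear combination of $A,B,C$ and finitely many of the $D_n$ (for instance $e_1=A$, $e_3=B$, $e_7=C$, $e_2=D_3-A$, $e_6=D_7-B$, $e_{14}=D_{15}-C$, and $e_{n-1}=D_n$ otherwise), and this linear assignment is readily inverted. Finally, a power series of the form $\prod_{j\ge 1}(1-t^j)^{e_j}$ with $e_j\in\ZZ$ determines the $e_j$ uniquely: if two such products coincide, then $\sum_j(e_j-e_j')\log(1-t^j)=0$, and successively extracting the coefficient of $t^N$ for $N=1,2,3,\dots$ forces $e_N=e_N'$ by induction on $N$. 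Hence the $e_j$, and therefore $A,B,C$ and all $D_n$, can be read off from $1/F\big(H_*(\Omega\uxa^\K;\QQ);t\big)$.

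The whole argument is essentially a bookkeeping exercise built on Theorem~\ref{thm:PSforuxa}; the only step meriting care is the last one, namely verifying that the passage from the decomposition data $(A,B,C,(D_n))$ to the exponent sequence $(e_j)$ is injective and that the factorisation $\prod_j(1-t^j)^{e_j}$ is unique. (Alternatively, one can bypass this bookkeeping entirely by observing that $A,B,C$ and the $D_n$ are the numbers of indecomposable rational homotopy generators of $\Omega\uxa^\K$ in each degree, which are intrinsic to the homotopy type.)
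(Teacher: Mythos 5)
Your proof is correct and takes the same route as the paper: equate two computations of $1/F\bigl(H_*(\Omega\uxa^\K;\QQ);t\bigr)$, one via Theorem~\ref{thm:PSforuxa} with $\k=\QQ$ and one via the K\"unneth theorem applied to the given product decomposition (together with the standard rational Poincar\'e series of spheres and looped spheres). The extra paragraph in which you verify that $A,B,C,(D_n)$ are uniquely recoverable from the resulting power series --- via the uniqueness of a factorisation $\prod_{j\ge1}(1-t^j)^{e_j}$ and the explicit triangular change of variables --- is a detail the paper treats as obvious, but it is correct and makes the word ``determined'' in the statement precise.
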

\begin{proof}
    Since $\Omega \uxa^\K \in \prod\mathcal{P}$, $\uxa^\K$ is simply connected and so the hypotheses of Theorem ~\ref{thm:PSforuxa} are satisfied. Since $\Omega \uxa^\K$ is an $H$-space, Remark ~\ref{rmk:Hspheres} implies the only spheres that can appear in a product decomposition are $S^1$, $S^3$ and $S^7$. Moreover, $H_*(\Omega \uxa^\K;\ZZ)$ is torsion free. Therefore, to enumerate the product terms it suffices to work rationally.  
    
    We calculate the Poincar\'e series twice. Theorem ~\ref{thm:PSforuxa} implies that $$1/F(H_*(\Omega\uxa^\K;\QQ);t)=\sum_{J\subseteq[m]}\widehat{bb}_{\K_J,\QQ}(t)\cdot\prod_{j\in J}F(\H_*(G_j;\QQ);t) \cdot \prod_{i=1}^m 1/F(H_*(\Omega X_i;\QQ);t).$$ Using the K\"unneth theorem on the product decomposition for $\Omega \uxa^\K$ in the statement of the theorem, $$1/F(H_*(\Omega\uxa^\K;\QQ);t)=\frac{\prod_{n \geq 3, n \neq 4,8} (1-t^{n-1})^{D_n}}{(1+t)^A(1+t^3)^B(1+t^7)^C}$$ and so the result follows.
\end{proof}

\begin{remark}
    An analogous result to Corollary ~\ref{cor:enumsphere} applies when $\Omega \uxa^{\K} \in \prod\mathcal{P}$ localised away from a set of primes. In this case, there may be extra terms of the form $S^{2n-1}$ with $n \geq 1$ which appear in the product decomposition. On the other hand, localised away from $2$, Serre's homotopy equivalence $\Omega S^{2n}\simeq S^{2n-1}\times\Omega S^{4n-1}$ allows one to assume that there are no terms of the form $\Omega S^{2n}$.
    
    When each pair $\uxa$ is of the form $\caa$ and $\Omega \caa^\K \in \prod\mathcal{P}$, \cite[Lemma 6.1]{vylegzhanin} implies that $\Omega \caa^\K$ can be written as a product of looped spheres. In this case, the formula in Corollary ~\ref{cor:enumsphere} can be simplified (see Corollary \ref{crl:MACenumhtpy} for a special case).
\end{remark}

Our final examples are real moment-angle complexes $\RK:=(D^1,S^0)^\K$ where $\K$ is $1$-neighbourly. Note that Theorem \ref{thm:caatorsinPifzkinP} does not apply to $\RK$, since $\Sigma S^0=S^1\notin\mathcal{W}=\{S^n:n\geq 2\}$. Hence, we do not know if Anick's conjecture holds for simply connected real moment-angle complexes. However, we can apply Theorem ~\ref{thm:PSforuxa} and Theorem ~\ref{thm:caaFPT} in the case $A_i=S^0$ to obtain the following.
\begin{theorem}
Let $\K$ be a $1$-neighbourly simplicial complex on $[m]$ and $\k$ be a field.
\begin{enumerate}
    \item 
    $1/F(H_*(\Omega\RK;\k);t)=\sum_{J\subseteq[m]}\widehat{bb}_{\K_J,\k}(t)\in\ZZ[t].$
    \item $\tau(\Omega\RK)\subseteq\tau(\OZK).$ In particular, $\Omega\RK$ has FPT. \qed
\end{enumerate}
\end{theorem}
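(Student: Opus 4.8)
The plan is to observe that $\RK=(D^1,S^0)^\K$ is exactly the polyhedral product $\caa^\K$ in which every pair is $(X_i,A_i)=(CS^0,S^0)=(D^1,S^0)$, and then to specialise the general machinery of this section. Before applying it, note that since $\K$ is $1$-neighbourly and each $X_i=D^1$ is (contractible, hence) simply connected, Corollary~\ref{crl:uxa^K simply connected} (or directly Lemma~\ref{lmm:caa^K simply connected}) guarantees that $\RK$ is simply connected; as $\RK$ is a finite $CW$-complex it is of finite type, so $\Omega\RK$ is of finite type and the hypotheses of Theorems~\ref{thm:PSforuxa} and~\ref{thm:poincare series for hocaak} are satisfied.

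For part (1), I would feed the pairs $(D^1,S^0)$ into Theorem~\ref{thm:PSforuxa}. The homotopy fibre $G_i$ of $S^0\hookrightarrow D^1$ is homotopy equivalent to $S^0$ because $D^1$ is contractible, so $\H_*(G_i;\k)=\H_*(S^0;\k)$ is $\k$ concentrated in degree $0$ and $F(\H_*(G_i;\k);t)=1$. Likewise $\Omega D^1\simeq\ast$, so $F(H_*(\Omega X_i;\k);t)=1$ and its reciprocal is $1$. Substituting these into the identity of Theorem~\ref{thm:PSforuxa} collapses both products (over $j\in J$ and over $i\in[m]$) to $1$, leaving $1/F(H_*(\Omega\RK;\k);t)=\sum_{J\subseteq[m]}\widehat{b}_{\K_J,\k}(t)$. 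Finally, each $\widehat{b}_{\K_J,\k}(t)$ lies in $\ZZ[t]$ by Proposition~\ref{prp:bb are polynomials} and Definition~\ref{def:bb reflected}, so the finite sum does too. Equivalently, one may apply Theorem~\ref{thm:poincare series for hocaak} directly with $A_i=S^0$, since $F(\H_*(S^0;\k);t)=1$.

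For part (2), I would invoke Theorem~\ref{thm:caaFPT} with $A_i=S^0$. Since $H_*(S^0;\ZZ)$ is torsion-free, $\tau(A_i)=\varnothing$ for every $i$, so the inclusion $\tau(\Omega\caa^\K)\subseteq\tau(\OZK)\cup\tau(A_1)\cup\dots\cup\tau(A_m)$ of Theorem~\ref{thm:caaFPT} simplifies to $\tau(\Omega\RK)\subseteq\tau(\OZK)$; the latter set is finite by Theorem~\ref{thm:djk has fpt}, so $\Omega\RK$ has FPT.

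I do not expect any genuine obstacle: the substance is already contained in Theorems~\ref{thm:PSforuxa}, \ref{thm:poincare series for hocaak}, \ref{thm:caaFPT} and~\ref{thm:djk has fpt}, and the computation above is pure bookkeeping (identifying $(D^1,S^0)$ with $(CS^0,S^0)$ and checking that the Poincar\'e series of the relevant pieces are trivial). The only step meriting attention is verifying that $\RK$ is simply connected; this is precisely why $1$-neighbourliness of $\K$ is imposed, and without it even simple connectivity of $\RK$, and hence the Poincar\'e-series formula, can fail.
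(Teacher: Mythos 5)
Your proposal is correct and follows exactly the paper's route: it identifies $\RK=(D^1,S^0)^\K$ as $\caa^\K$ with $A_i=S^0$, observes that $1$-neighbourliness plus Lemma~\ref{lmm:caa^K simply connected} gives simple connectivity, and then specialises Theorem~\ref{thm:PSforuxa} (equivalently Theorem~\ref{thm:poincare series for hocaak}) with $F(\H_*(S^0;\k);t)=1$ for part~(1), and Theorem~\ref{thm:caaFPT} with $\tau(S^0)=\varnothing$ for part~(2). Your extra bookkeeping (checking $G_i\simeq S^0$ and $\Omega D^1\simeq\ast$) is accurate, and the proof matches the paper's one-line derivation.
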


\end{document}